\documentclass[11pt]{amsart}
\usepackage{array}
\usepackage{amsmath,amssymb,graphics,graphicx}

\begin{document}

\newtheorem{thm}{Theorem}[section]
\newtheorem{lem}[thm]{Lemma}
\newtheorem{prop}[thm]{Proposition}
\newtheorem{cor}[thm]{Corollary}
\newtheorem{defn}[thm]{Definition}
\newtheorem*{remark}{Remark}
\newtheorem{theorem}{Theorem}[section]

\newtheorem{lemma}[thm]{Lemma}
\newtheorem{corollary}[thm]{Corollary}
\newtheorem{observation}[thm]{Observation}
\newtheorem{conj}[thm]{Conjecture}
\newtheorem{definition}[thm]{Definition}

\numberwithin{equation}{section}

\newcommand{\Z}{{\mathbb Z}} 
\newcommand{\Q}{{\mathbb Q}}
\newcommand{\R}{{\mathbb R}}
\newcommand{\C}{{\mathbb C}}
\newcommand{\N}{{\mathbb N}}
\newcommand{\FF}{{\mathbb F}}
\newcommand{\fq}{\mathbb{F}_q}

\newcommand{\rmk}[1]{\footnote{{\bf Comment:} #1}}

\renewcommand{\mod}{\;\operatorname{mod}}
\newcommand{\ord}{\operatorname{ord}}
\newcommand{\TT}{\mathbb{T}}
\renewcommand{\i}{{\mathrm{i}}}
\renewcommand{\d}{{\mathrm{d}}}
\renewcommand{\^}{\widehat}
\newcommand{\HH}{\mathbb H}
\newcommand{\Vol}{\operatorname{vol}}
\newcommand{\supp}{\operatorname{supp}}
\newcommand{\area}{\operatorname{area}}
\newcommand{\tr}{\operatorname{tr}}
\newcommand{\norm}{\mathcal N} 
\newcommand{\intinf}{\int_{-\infty}^\infty}
\newcommand{\ave}[1]{\left\langle#1\right\rangle} 
\newcommand{\Var}{\operatorname{Var}}
\newcommand{\Prob}{\operatorname{Prob}}
\newcommand{\sym}{\operatorname{Sym}}
\newcommand{\disc}{\operatorname{disc}}

\newcommand{\cond}{\operatorname{cond}} 
\newcommand{\lcm}{\operatorname{lcm}}
\newcommand{\Kl}{\operatorname{Kl}} 
\newcommand{\leg}[2]{\left( \frac{#1}{#2} \right)}  

\newcommand{\sumstar}{\sideset \and^{*} \to \sum}

\newcommand{\LL}{\mathcal L} 
\newcommand{\sumf}{\sum^\flat}
\newcommand{\Hgev}{\mathcal H_{2g+2,q}}
\newcommand{\USp}{\operatorname{USp}}
\newcommand{\conv}{*}
\newcommand{\dist} {\operatorname{dist}}
\newcommand{\CF}{c_0} 
\newcommand{\kerp}{\mathcal K}

\renewcommand{\baselinestretch}{1.2}
\newcommand{\ignore}[1]{}
\newcommand{\monic}{{\operatorname{monic}}}
\newcommand{\pr}{{\operatorname{prime}}}
\newcommand{\dis}{{\operatorname{distinct}}}
\newcommand{\mob}{{\operatorname{o}}}
\def\square{\vrule height6pt width7pt depth1pt}
\def\endpf{\hfill\square\bigskip}


\newcommand{\beq}{\begin{equation}}
\newcommand{\eeq}{\end{equation}}
\newcommand{\lb}{\left(}
\newcommand{\rb}{\right)}
\newcommand{\rf}[1]{(\ref{#1})}
\renewcommand{\ss}{\subset}

\newcommand{\sr}{{\mathcal{S}(\R)}}
\newcommand{\hgq}{\mathcal{H}(2g+1,q)}
\newcommand{\av}[1]{\langle #1\rangle}
\newcommand{\pin}{\Pi_n}
\newcommand{\uf}{{\underline{F}}}
\newcommand{\ug}{{\underline{G}}}
\newcommand{\uU}{{\underline{U}}}
\newcommand{\uv}{{\underline{V}}}
\newcommand{\uw}{{\underline{W}}}
\newcommand{\uo}{\underline{O}}
\newcommand{\ft}{\widetilde{f}}
\newcommand{\fh}{\widehat{f}}
\newcommand{\ut}{\widetilde{U}}
\newcommand{\ity}{\infty}
\renewcommand{\d}{\mathrm{d}}
\newcommand{\uh}{\hat{U}}
\newcommand{\uu}{{\hat{u}}}
\newcommand{\mm}{{\mathbf{m}}}
\newcommand{\n}{{\mathbf{n}}}

\renewcommand{\th}{\theta}
\newcommand{\Lam}{\Lambda}
\newcommand{\del}{\delta}
\newcommand{\ka}{\kappa}
\newcommand{\al}{\alpha}
\newcommand{\sig}{\sigma}
\newcommand{\eps}{\epsilon}

\newcommand{\AES}{\mathcal T}

\title[quadratic Dirichlet L-functions, hyper-elliptic curves and RMT]
{Low-lying zeros of quadratic Dirichlet L-functions, hyper-elliptic
curves and Random Matrix Theory  }

\author{Alexei Entin, Edva Roditty-Gershon and Ze\'ev Rudnick}

\address{Raymond and Beverly Sackler School of Mathematical Sciences,
Tel Aviv University, Tel Aviv 69978, Israel}

\thanks{The research leading to these results has received funding from the European
Research Council under the European Union's Seventh Framework
Programme (FP7/2007-2013) / ERC grant agreement n$^{\text{o}}$
320755.}
\date{\today}

\begin{abstract}
The statistics of low-lying zeros of quadratic Dirichlet L-functions
were conjectured by Katz and Sarnak to be given by the scaling limit
of eigenvalues from the unitary symplectic ensemble. The $n$-level
densities were found to be in agreement with this in a certain
neighborhood of the origin in the Fourier domain by Rubinstein in
his Ph.D. thesis in 1998. An attempt to extend the neighborhood was
made in the Ph.D. thesis of Peng Gao (2005), who under GRH gave the
density as a complicated combinatorial factor, but it remained open
whether it coincides with the Random Matrix Theory factor. For
$n\leq 7$ this was recently confirmed by Levinson and Miller. We
resolve this problem for all $n$, not by directly doing the
combinatorics, but by passing to a function field analogue, of
L-functions associated to hyper-elliptic curves of given genus $g$
over a field of $q$ elements. We show that the answer in this case
coincides with Gao's combinatorial factor up to a controlled error.
We then take the limit of large finite field size $q\to \infty$ and
use the Katz-Sarnak equidistribution theorem, which identifies the
monodromy  of the Frobenius conjugacy classes for the hyperelliptic
ensemble with the group $\USp(2g)$. Further taking the limit of
large genus $g\to \infty$ allows us to identify Gao's combinatorial
factor with the RMT answer.
\end{abstract}
 \maketitle

\section{introduction}

\subsection{One-level densities for quadratic L-functions}
Our goal in this paper is to study statistics of low-lying zeros of
quadratic Dirichlet L-functions. To simplify the discussion, we
restrict to discriminants of the form $8d$, where $d>0$ is an odd,
square-free integer. The corresponding quadratic characters
$\chi_{8d}$ are then all primitive and even, and have conductor
$8d$. Denote the nontrivial zeros of  the corresponding L-function
$L(s,\chi_{8d})$ by
\begin{equation}
\frac 12 +\i\gamma_{8d,j}, \quad j=\pm 1,\pm 2, \dots
\end{equation}
where the labeling is so that $\gamma_{8d,-j} = -\gamma_{8d,j}$.
The number $N(T,8d)$ of such zeros with $0\leq \Re\gamma_{8d,j} \leq
T$ is asymptotically, for $T>1$,
\begin{equation}\label{N(T)}
N(T,8d) = \frac {T}{2\pi} \log \frac{8d T}{2\pi} -\frac T{2\pi} +O(
\log 8dT ).
\end{equation}

 We wish to study statistics of the zeros of $L(s,\chi_{8d})$ for random $d$. To do so, set
\begin{equation}
\mathcal D(X) = \{ X\leq d \leq 2X: d \mbox{ odd, square-free} \}
\end{equation}
Then $\#\mathcal D(X) \sim \frac 4{\pi^2} X$, as $ X\to \infty$. To
define what it means to pick a "random" discriminant from $\mathcal
D(X)$, we take a smooth  weight function $\Phi\geq 0$ supported in
the interval $(1,2)$, satisfying $\int \Phi(u)du=1$, and
define an averaging operator 
for  functions $f$ on $\mathcal D(X)$ by
\begin{equation}\label{smooth ensemble}
\ave{f}_{\mathcal D(X)}:=\frac 1{\#D(X)}   \sum_{d\in \mathcal D(X)}
\Phi(\frac dX) f(d).
\end{equation}
Thus we obtain a probability measure on   $\mathcal D(X)$ which
endows it with the structure of a a probability space (ensemble),
which we call the quadratic ensemble.

To count the number of zeros on the scale of the mean spacing $\log
8d/2\pi$ between the low-lying zeros, we define the linear
statistic, or one-level density, by taking an even Schwartz function
$f(r)$, which is analytic in a strip $|\Im r|\leq 1/2$, and setting
for $d\in \mathcal D(X)$
\begin{equation}
W_f(d):=\sum_{j } f(L\gamma_{8d,j}).
\end{equation}
Here $L=\log X/2\pi$.

The expectation values of the one-level densities for the quadratic
ensemble were studied by Katz and Sarnak \cite{KS-BAMS,
Sarnakappendix} (see also \cite{OS1, OS2}) who showed that, assuming
GRH, in the "scaling limit" $X\to \infty$, their expected value
coincides with the analogous quantity for the eigenphases of random
matrices from unitary symplectic groups $\USp(2g)$ in the limit
$g\to \infty$, that is
\begin{equation}\label{one-level conj}
\lim_{X\to \infty} \ave{W_f}_{\mathcal D(X)} =\intinf
f(x)\left(1-\frac{\sin 2\pi x}{2\pi x} \right)dx
\end{equation}
under the condition  that the Fourier transform $\^f(u) =
\int_{\R}f(x)e^{-2\pi \i xu}dx$ is supported in the interval
\begin{equation}\label{KS-OS}
|u|<2.
\end{equation}
The Density Conjecture \cite{KS-BAMS} is that \eqref{one-level conj}
holds for any test function $f$. See \cite{Stopple} for numerical
support for the conjecture and \cite{Miller} for a refined version.

\subsection{Higher moments and the $n$-level densities}

We want to study the moments of the linear statistic. The goal is to
show that
in the scaling limit the moments coincide with the analogous
quantity for the eigenphases of random matrices from unitary
symplectic groups.


The moments are determined by 
multi-linear statistics known as the $n$-level densities. To define
these, one starts with a Schwartz function $f\in \mathcal S(\R^n)$,
which is
even  in all variables.
The $n$-level density for $d\in \mathcal D(X)$ is
\begin{equation}
W_f^{(n)}(d) := \sum_{\substack{ j_1,\dots, j_n =\pm 1,\pm 2,\dots \\
|j_k|\mbox{ distinct}}} f(L \gamma_{8d,j_1},\dots, L\gamma_{8d,j_n})
\;,
\end{equation}
where  the sum is over $n$-tuples of indices $j_1,\dots ,j_n=\pm
1,\pm 2,\dots $ with $j_r\neq \pm j_s$ for $r\neq s$, and $L=\log
X/2\pi$. The density conjecture \cite{KS} for low lying zeros of
this family of L-functions is that the scaling limit coincides with
the scaling limit of the $n$-level densities for random matrices in
the unitary symplectic group $\USp(2g)$, that is
\begin{equation}\label{n-level conj}
 \lim_{X\to\infty} \ave{W_f^{(n)}}_{\mathcal D(X)} 
 =\int_{\R^n} f(x) W^{(n)}_{\USp}  (x) dx,
\end{equation}
where
\begin{equation}\label{RMT answer}
\begin{split}
W^{(n)}_{\USp}(x) &= \det (K(x_i,x_j))_{i,j=1,\dots,n},\\
 K(x,y) &=  \frac{\sin \pi(x-y)}{\pi(x-y)}- \frac{\sin \pi(x+y)}{\pi(x+y)}.
\end{split}
\end{equation}


The higher densities for this ensemble were investigated in the
Ph.D. thesis of Mike Rubinstein \cite{Rubinsteinthesis, Rubinstein},
who assuming GRH established \eqref{n-level conj} under the
condition that the Fourier transform
 $\^f(u) = \int_{\R^n}f(x)e^{-2\pi \i x \cdot u}dx$  is supported in the set
\begin{equation}\label{Gaussian range}
\sum_{j=1}^n |u_j| <1 \;.
\end{equation}
Note that for $n=1$, \eqref{Gaussian range} is only half the range
in \eqref{KS-OS}.

In his Ph.D. thesis \cite{Gao, Gaoarxiv}, Peng Gao attempted to
double the range in Rubinstein's result. He showed, assuming GRH,
that if $f$ is of the form $f(x_1,...,x_n)=\Pi_{j=1}^nf_j(x_i)$ and
each $\^f_j$ is supported in the range $|u_j|<s_j$ with $\sum s_j<2$
so that $f$ is supported on the range
\begin{equation}\label{KS range}
\sum_{j=1}^n |u_j| <2,
\end{equation}
then
\begin{equation}\label{Gao's result}
\ave{W_f^{(n)}}_{\mathcal D(X)} = A(f) + o(1),\quad X\to \infty,
\end{equation}
where $A(f)=A(f_1,...,f_n)$ is a complicated combinatorial
expression, taking almost a page to write down (see
Theorem~\ref{finalthm}). In view of \eqref{Gao's result}, proving
\eqref{n-level conj} in this range is reduced to a purely
combinatorial problem, of proving an identity
\begin{equation}\label{goal identity}
A(f) = \int_{\R^n} f(x) W^{(n)}_{\USp}  (x) dx
\end{equation}
which Gao verified for $n=2,3$.
More recently, Levinson and Miller \cite{LM} have confirmed
\eqref{goal identity} for $n=4,5,6,7$, aided by a machine
calculation. In this paper we confirm the equality for all $n$.
\begin{thm}\label{main thm quadratic} Assume GRH.  For
test functions whose Fourier transform $\^f$ is supported in the
region $\sum_{j=1}^n |u_j| <2$, we have
\begin{equation}
 \lim_{X\to\infty} \ave{W_f^{(n)}}_{\mathcal D(X)} = 
 \int_{\R^n} f(x) W^{(n)}_{\USp}  (x) dx.
\end{equation}
\end{thm}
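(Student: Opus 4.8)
The plan is to reduce the theorem to the combinatorial identity \eqref{goal identity}, $A(f) = \int_{\R^n} f(x)\, W^{(n)}_{\USp}(x)\,dx$, and to prove this identity by transferring the problem to a function field. By Gao's theorem (Theorem~\ref{finalthm}), under GRH one has $\lim_{X\to\infty}\ave{W^{(n)}_f}_{\mathcal D(X)} = A(f)$ whenever $f = \prod_{j=1}^n f_j$ with each $\^f_j$ supported in $(-s_j,s_j)$ and $\sum_j s_j < 2$. Since the region $\sum_j |u_j| < 2$ is covered by such open boxes, and since, by the explicit formula, $f \mapsto \ave{W^{(n)}_f}_{\mathcal D(X)}$ is bounded uniformly in $X$ in terms of the support and the $L^1$-norm of $\^f$, it suffices to prove \eqref{goal identity} for product test functions, the general case following by linearity and density. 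What remains is then the purely combinatorial assertion $A(f) = \int f\, W^{(n)}_{\USp}$.

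Next I pass to the hyperelliptic ensemble $\hgq$ of curves $C : y^2 = Q(x)$, where $Q \in \fq[x]$ is monic, squarefree of degree $2g+1$. Its $L$-function factors as $\mathcal L(u,\chi_Q) = \prod_{j=1}^{2g}(1 - \sqrt q\, e^{\i\theta_{j,C}} u)$ with all angles $\theta_{j,C}$ real, by Weil's Riemann Hypothesis, so the $\theta_{j,C}$ are the analogue of the $\gamma_{8d,j}$; I define $\ave{W^{(n)}_f}_{\hgq}$ as the $n$-level density of the rescaled zeros $\tfrac{2g}{2\pi}\theta_{j,C}$. The heart of the argument is to evaluate this average by the explicit formula. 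Unfolding $W^{(n)}_f(C)$ produces products of prime-polynomial sums $q^{-m/2}\sum_{\deg F = m}\Lam(F)\chi_Q(F)\,\^f_k(\tfrac{m}{2g})$ over monic $F$, with degrees constrained by the support of $\^f$; averaging $\chi_Q$ over the ensemble, the contribution is governed by the perfect-square part of $F$, with a precisely computable (and small) contribution otherwise, while the functional equation (self-duality) of $\mathcal L(u,\chi_Q)$ is used to treat the ranges in which $\deg F$ exceeds $2g$. Running exactly the same bookkeeping that Gao carries out over number fields --- the role of Gao's approximate functional equation being taken by the exact polynomial functional equation, and that of his conductor contributions $\log 8d$ by $2g$ --- one finds that for $\^f$ supported in $\sum_j |u_j| < 2$ and $g$ large relative to $f$,
\begin{equation*}
\ave{W^{(n)}_f}_{\hgq} = A(f) + O_{f,g}(q^{-1/2}).
\end{equation*}

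On the other hand, the geometric monodromy of $\hgq$ is all of $\USp(2g)$, so by the Katz--Sarnak equidistribution theorem the Frobenius conjugacy classes $\Theta_C$ become equidistributed in $\USp(2g)$ with respect to Haar measure as $q \to \infty$. For $\^f$ compactly supported, $W^{(n)}_f(C)$ is a fixed continuous class function on $\USp(2g)$ --- a finite linear combination of products of traces $\tr(\Theta_C^{k})$ --- so equidistribution gives $\lim_{q\to\infty}\ave{W^{(n)}_f}_{\hgq} = \int_{\USp(2g)} W^{(n)}_f(U)\,dU = \int_{\R^n} f(x)\, W^{(n)}_{\USp(2g)}(x)\,dx$, where $W^{(n)}_{\USp(2g)}$ denotes the $n$-level density kernel of the finite group $\USp(2g)$. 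Letting $q \to \infty$ in the displayed identity gives $\lim_{q\to\infty}\ave{W^{(n)}_f}_{\hgq} = A(f)$, so $A(f) = \int_{\R^n} f(x)\, W^{(n)}_{\USp(2g)}(x)\,dx$ for every $g$ large enough (depending on $f$); letting $g \to \infty$ and invoking the standard locally uniform convergence $W^{(n)}_{\USp(2g)} \to W^{(n)}_{\USp}$ of \eqref{RMT answer} gives \eqref{goal identity}. Substituting back into Gao's formula completes the proof. GRH enters only through Gao's input on the number field side; the identity \eqref{goal identity} itself is proved unconditionally.

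The main obstacle is the middle step: arranging the function field computation so that its main term is \emph{exactly} Gao's combinatorial factor $A(f)$, rather than some expression that merely turns out a posteriori to equal the random-matrix integral. This means transporting Gao's handling of the approximate functional equation and of the conductor contributions faithfully into the function field setting, and --- in order to reach the full range $\sum_j |u_j| < 2$ rather than $\sum_j |u_j| < 1$ --- controlling the family averages $\ave{\chi_Q(F)}_{\hgq}$ for $\deg F$ almost as large as $4g$; for this one exploits the strong cancellation available after averaging over the squarefree family (far beyond the pointwise Weil bound) together with the polynomial functional equation of $\mathcal L(u,\chi_Q)$. The remaining ingredients --- the Katz--Sarnak equidistribution theorem and the large-$g$ asymptotics of the $\USp(2g)$ $n$-level density --- are by now standard.
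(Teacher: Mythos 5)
Your overall strategy — reduce to \eqref{goal identity} via Gao's theorem, compute the $n$-level density for the hyperelliptic ensemble, and identify the limit via Katz--Sarnak — is exactly the route the paper takes. However, the error structure you assert in the key function-field step is wrong, and the resulting intermediate claim is false.

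You write $\ave{W^{(n)}_f}_{\hgq} = A(f) + O_{f,g}(q^{-1/2})$, and deduce, after $q\to\infty$, that $A(f) = \int_{\R^n} f\, W^{(n)}_{\USp(2g)}$ exactly for every sufficiently large $g$. This cannot hold: $A(f)$ is a fixed number independent of $g$, whereas $\int f\, W^{(n)}_{\USp(2g)}$ genuinely varies with $g$. Already for $n=1$, with $\^f$ supported in $(-1,1)$, Diaconis--Shahshahani gives $\ave{W^{(1)}_f}_{\USp(2g)} = \^f(0) - \sum_{0<|s|<g}\frac{1}{2g}\^f(s/g)$, a Riemann sum that approaches $\^f(0) - \tfrac12\int_{-1}^{1}\^f$ only with an $O(1/g)$ error and is not eventually constant in $g$. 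So the function-field average cannot converge to $A(f)$ as $q\to\infty$ at fixed $g$; there is necessarily a $g$-dependent discrepancy that survives the $q\to\infty$ limit. The discrepancy comes from the Riemann-sum-vs.-integral approximation when the discrete sums over degrees $r_i \sim 2g\cdot u_i$ are replaced by integrals in $u$, and from the prime-power and prime-counting estimates; none of these errors decay in $q$.

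The correct statement, which is what the paper's Theorem~\ref{comparison thm} provides, is
$$\ave{W^{(n)}_f}_{\mathcal H(2g+1,q)} = A(f) + O_f\!\left(\tfrac{\log g}{g}\right),$$
with the implied constant \emph{uniform in $q$}. That uniformity is essential — the paper states so explicitly — because it is what lets one take the iterated limit: first $q\to\infty$ (Katz--Sarnak turns $\ave{\cdot}_{\hgq}$ into $\ave{\cdot}_{\USp(2g)}$, still with an $O(\log g/g)$ error against $A(f)$), and only then $g\to\infty$ to kill that error and obtain $A(f)=\int f\,W^{(n)}_{\USp}$. Your argument would still produce the right conclusion after the $g\to\infty$ step if you replaced $O_{f,g}(q^{-1/2})$ by the uniform $O_f(\log g/g)$, but as written the claim that $A(f)=\int f\,W^{(n)}_{\USp(2g)}$ for finite $g$ is a genuine gap, not merely a notational slip; it signals that the $q$- and $g$-dependence of the errors has not been sorted out, which is precisely the delicate point of the comparison theorem.
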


Instead of directly attacking the combinatorial problem, we approach
it by comparing the densities of the zeros  with a function field
analogue, of zeros of L-functions for hyperelliptic curves of genus
$g$ defined over a finite field $\fq$. We then use the
equidistribution results of Deligne and Katz-Sarnak to pass to the
large finite field limit $q\to \infty$ and identify the limit with
RMT. This is similar in spirit to one of the ingredients in the work
of Ng$\hat{\mbox{o}}$  on the ``Fundamental Lemma'', where a
complicated combinatorial identity arising from a number field setup
is proved via a passage to the function field setting \cite{Ngo}. To
explain how we do it, we first describe the RMT context and then
move on to the function field setting.

\subsection{Random Matrix Theory (RMT)}
 For any continuous function $F$ on the set of
conjugacy classes of $\USp(2g)$, we denote by $\ave{F}_{\USp(2g)}$
its average with respect to the Haar probability measure on
$\USp(2g)$:
\begin{equation}
\ave{F}_{\USp(2g)} = \int_{\USp(2g)} F(U)dU.
\end{equation}

Recall that for  a unitary symplectic matrix $U\in \USp(2g)$, if
$e^{\i \theta}$ is an eigenvalue then so is $e^{-\i \theta}$. We can
then label the eigenvalues of $U$ as $e^{\i \theta_{\pm j}}$,
$j=1,\dots,g$ with the eigenphases $\theta_1,\dots,
\theta_g\in[0,\pi]$ and $\theta_{-j} = -\theta_j$.


To define $n$-level densities, one starts with a Schwartz function
$f\in \mathcal S(\R^n)$, which is
even in all variables,
and sets
\begin{equation}
\ft(\theta) = \sum_{m\in \Z^n} f\left(\frac g\pi(\theta +2\pi
m)\right) \;,
\end{equation}
which is $2\pi$-periodic and localized on a scale of $1/g$. The
$n$-level density is
\begin{equation}
W_f^{(n)}(U) = \sum_{\substack{ j_1,\dots, j_n =\pm1,\dots,\pm g \\
|j_k|\mbox{ distinct}}} \ft(\theta_{j_1},\dots, \theta_{j_n}) \;,
\end{equation}
where  the sum is over $n$-tuples of indices $j_1,\dots ,j_n = \pm
1,\dots,\pm g$ with $j_r\neq \pm j_s$ if $r\neq s$.

If we restrict the Fourier transform $\^f(u)$ to be supported in the
region
$|u|<\frac 1n$
then the first $n$ moments of the linear statistic $W^{(1)}_f$ in
RMT are Gaussian \cite{HR}.
This was called ``mock-Gaussian'' behavior in \cite{HR}. The higher
moments are also known, but no longer have a simple expression
(however see \cite{Hughes-Miller} for some nice expressions for the
centered moments of orthogonal families). It is the $n$-level
density which has a clean expression: In the scaling limit, the
$n$-level densities
are given by
\begin{equation}\label{n-level RMT}
   \lim_{g\to \infty} \ave{W_f^{(n)}}_{\USp(2g)} =
 \int_{\R^n} f(x) W^{(n)}_{\USp}  (x) dx,
\end{equation}
where $W^{(n)}_{\USp}$ is given by \eqref{RMT answer}.

\subsection{The hyperelliptic ensemble}

For a finite field $\fq$ of odd cardinality $q$ consider the family
$\mathcal H(2g+1,q)$ of all curves given in affine form by an
equation
$$ C_h: y^2 = h(x)$$
where
$$ h(x) = x^{2g+1} +a_{2g}x^{2g}+\dots +a_0 \in \fq[x]$$
is a square-free, monic polynomial of degree $2g+1$. The curve $C_h$
is thus nonsingular and of genus $g$. We consider $\mathcal
H(2g+1,q)$ as a probability space (ensemble) with the uniform
probability measure, so that the expected value of any function $F$
on $\mathcal H(2g+1,q)$ is defined as
\begin{equation}
\ave{F}_{\mathcal H(2g+1,q)}:=\frac 1{\#\mathcal H(2g+1,q)}
\sum_{h\in \mathcal H(2g+1,q)} F(h).
\end{equation}

The zeta function associated with the hyperelliptic curve $C_h\in
\mathcal H(2g+1,q)$ has the form
\begin{equation}\label{zeta function}
 Z_h(u) = \frac{\det(I-u\sqrt{q}\Theta_h)}{(1-u)(1-qu)}
\end{equation}
for a unique conjugacy class of $2g\times 2g$ unitary symplectic
matrices $\Theta_h\in \USp(2g)$ so that the eigenvalues
$e^{i\theta_j}$ of $\Theta_h$ correspond to zeros
$q^{-1/2}e^{-i\theta_j}$ of $Z_h(u)$. The matrix (or rather the
conjugacy class) $\Theta_h$ is called the unitarized Frobenius class
of $C_h$.
Katz and Sarnak showed \cite{KS} that as $q\to \infty$, the
Frobenius classes $\Theta_h$ become equidistributed in the unitary
symplectic group $\USp(2g)$: For any continuous function on the
space of conjugacy classes of $\USp(2g)$,
\begin{equation}\label{KS equidistribution}
\lim_{q\to\infty}  \ave{F(\Theta_h)}_{\mathcal H(2g+1,q)} =
\ave{F(U)}_{\USp(2g)}.
\end{equation}
This implies that various statistics of the eigenvalues can, in this
limit, be computed by integrating the corresponding quantities over
$\USp(2g)$. In particular, the $n$-level densities for the
hyper-elliptic ensemble $\mathcal H(2g+1,q)$ when $g$ is fixed are
given in the large finite field limit by
\begin{equation}\label{n-level KS}
\lim_{q\to \infty} \ave{W_f^{(n)}}_{\mathcal H(2g+1,q)}
=\ave{W_f^{(n)}}_{\USp(2g)}.
\end{equation}
Therefore, on further taking the large genus  limit $g\to \infty$
one gets
\begin{equation}\label{iterated limit}
\lim_{g\to \infty} \Big(\lim_{q\to \infty}\ave{W_f^{(n)}}_{\mathcal
H_{2g+1}} \Big) 
=\int_{\R^n} f(x) W^{(n)}_{\USp}(x) dx.
\end{equation}

\subsection{Comparing the hyperelliptic and quadratic ensembles}
We will compute the averages of the $n$-level densities for the
hyper-elliptic ensemble. We will show that in the range \eqref{KS
range} they are asymptotically equal to a complicated combinatorial
expression up to a remainder term that is negligible  for large $g$,
the same expression $A(f)$ which appears in Gao's result
\eqref{Gao's result}.
\begin{thm}\label{comparison thm}
Assume that $f(x_1,...,x_n)=\prod_{j=1}^n f_j(x_j)$, with $f_j\in
\mathcal S(\R)$ even  and each $\^f_j(u_j)$ is supported in the
range $|u_j|<s_j$, with $\sum s_j<2$. Then
\begin{equation}\label{comparison eq}
\ave{W_f^{(n)}}_{\mathcal H(2g+1,q)} = A(f)
+O_f\left(\frac{\log g}{g}\right),
\end{equation}
the implied constant independent of the finite field size $q$, and
with $A(f)=A(f_1,...,f_n)$ as in Theorem~\ref{finalthm}.
\end{thm}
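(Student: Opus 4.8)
The plan is to carry out, in the hyperelliptic ensemble, the precise analogue of Gao's explicit-formula computation \eqref{Gao's result}, arranged so that its main term is \emph{by construction} his expression $A(f)$ (Theorem~\ref{finalthm}), while the terms that in the number field setting are controlled only under GRH are here controlled unconditionally, by the Riemann Hypothesis for curves (the Weil bound) together with the square-free sieving built into $\hgq$. That is also why the statement is unconditional and the implied constant is independent of $q$.

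First I would insert the explicit formula. Writing $\^{f_l}$ for the Fourier transform of $f_l$ (supported in $|u_l|<s_l$), the periodization $\ft_l(\theta)=\sum_{m\in\Z}f_l(\tfrac g\pi(\theta+2\pi m))$ has the finite Fourier expansion $\ft_l(\theta)=\tfrac1{2g}\sum_{|k|<2gs_l}\^{f_l}(\tfrac k{2g})e^{\i k\theta}$, so that
\begin{equation*}
W_f^{(n)}(\Theta_h)=\frac1{(2g)^n}\sum_{\substack{k_1,\dots,k_n\in\Z\\|k_l|<2gs_l}}\Big(\prod_{l=1}^n\^{f_l}\big(\tfrac{k_l}{2g}\big)\Big)\sum_{\substack{j_1,\dots,j_n=\pm1,\dots,\pm g\\|j_r|\mbox{ distinct}}}\prod_{l=1}^n e^{\i k_l\theta_{j_l}}.
\end{equation*}
Using $\theta_{-j}=-\theta_j$ together with inclusion--exclusion over the partition lattice of $\{1,\dots,n\}$ (the standard device in the $\USp$ moment computations, cf.\ \cite{HR}), the inner sum becomes a signed sum over set partitions $\Pi$ of products $\prod_{B\in\Pi}\big(\sum_{\pm}\tr\Theta_h^{\,\sum_{l\in B}\pm k_l}\big)$, with the convention $\tr\Theta_h^0=2g$. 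The logarithmic derivative of $L(u,\chi_h)=\det(I-u\sqrt q\,\Theta_h)$ then supplies the identity $\tr\Theta_h^m=-q^{-m/2}\sum_{\deg F=m}\Lam(F)\chi_h(F)$ for $m\ge1$ (the sum over monic $F\in\fq[x]$, $\Lam$ the von Mangoldt function), together with $\tr\Theta_h^{-m}=\tr\Theta_h^m$.

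Next I would take the ensemble average. Expanding the products of traces and using $\prod_i\chi_h(F_i)=\chi_h(\prod_iF_i)$, everything reduces to the quadratic character averages $\ave{\chi_h(F)}_{\hgq}$. The arithmetic input is their evaluation: if $F$ is a perfect square then $\ave{\chi_h(F)}_{\hgq}=\prod_{P\mid F}(1+|P|^{-1})^{-1}$, up to an error that decays once $\deg F$ is bounded away from $2g$; and if $F$ is not a perfect square then $\ave{\chi_h(F)}_{\hgq}$ is negligible --- one opens the square-free condition by M\"obius, applies quadratic reciprocity in $\fq[x]$ to the resulting character sums over $h$, and bounds them by Weil. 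The hypothesis $\sum_ls_l<2$ is exactly what makes this work: every $F=\prod_iF_i$ occurring has $\deg F\le\sum_l|k_l|<2g\sum_ls_l<4g$, staying below the threshold at which reciprocity would produce extra ``dual'' main terms (the $q\to\infty$ shadow of the functional equation, familiar from the study of high powers of the Frobenius class); this is the function field counterpart of the number field support condition $\sum|u_j|<2$ in \eqref{KS range}. Splitting accordingly, the ``diagonal'' contribution --- from the partitions and $\pm$-choices that force every block polynomial to be a perfect square --- produces the main term, and the remainder is an error.

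It remains to identify the diagonal main term with $A(f)$ and to estimate the error. Inserting $\prod_{P\mid F}(1+|P|^{-1})^{-1}$, evaluating the resulting geometric sums over monic prime powers via the prime polynomial theorem (the number of monic irreducibles of degree $d$ is $q^d/d+O(q^{d/2}/d)$), and combining the arithmetic factors with the normalization $\#\hgq=q^{2g+1}(1-q^{-1})$, one obtains a combinatorial expression in the $\^{f_l}$ from which all $q$-dependence cancels, and since the computation runs in parallel with Gao's at every stage this expression is term-for-term his $A(f)$. Two points need real care. First, (a): organizing the diagonal computation so that the agreement with the page-long formula $A(f)$ is \emph{manifest} rather than checked after the fact --- conceptually this is the point of the whole approach, since it is what lets us bypass the combinatorics of \eqref{goal identity}. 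Second, (b), the more technically demanding: controlling the off-diagonal \emph{uniformly in $q$}. This is not a matter of crude bounds but, exactly as in the analysis of high powers of the Frobenius class, requires genuine cancellation in the character sums over $h$, and it is precisely here that the hypothesis $\sum s_l<2$ is indispensable (beyond it the dual terms survive and the answer changes). I expect (b) to be the main obstacle. Granting it, collecting the negligible non-square averages, the secondary term in the prime polynomial theorem, the corrections to the square-case averages $\ave{\chi_h(G^2)}$ near degree $2g$, and the error in discretizing the frequency sums yields a total error $O_f(\log g/g)$, uniformly in $q$, which assembled with the main term gives $\ave{W_f^{(n)}}_{\hgq}=A(f)+O_f(\log g/g)$ as claimed.
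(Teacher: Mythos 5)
Your framework is broadly the same as the paper's (explicit formula, inclusion--exclusion over the partition lattice $\Pi_n$, M\"obius sieve for square-freeness, quadratic reciprocity, Weil bounds), but the central arithmetic claim is wrong, and it is wrong in exactly the place that makes this theorem nontrivial. You assert that for $F$ not a perfect square, $\ave{\chi_h(F)}_{\hgq}$ is always negligible once $\deg F<4g$, so that the main term comes from the ``diagonal'' (square $F$) alone, with the off-diagonal absorbed into the error. That statement is only correct when $\deg F<2g$, i.e.\ in Rubinstein's range $\sum s_j<1$. Once $\sum s_j$ exceeds $1$, polynomials $F=\prod P_j$ of degree in $(2g,4g)$ occur, and for those the average $\ave{\chi_h(F)}$ is \emph{not} small: after opening the square-free sieve \eqref{Averaging quadratic characters}, the inner character sum $S(\beta;\vec r)$ with $\beta=2g\pm 1$ lies in the range $\beta\le\sum r_j-2$ where the Weil/diagonal estimate (Lemma~\ref{lemm: first estimation s}) is useless, and one must invoke the functional equation \eqref{functional equation} (the duality relations \eqref{duality odd}, \eqref{duality even}) to extract a genuine main term. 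This is the content of Propositions~\ref{prop: s eval} and \ref{prop:computing aveP}: for $\sum r_j>2g$ the average $\ave{\mathcal P(\vec r)}$ equals $-\sum_{\sigma(I)<\sigma(I^c)-2g}(-1)^{|I|}$ up to controlled error, which is a non-negligible quantity.

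These dual contributions are precisely what produce the $D$-integrals $\int_{\{\sum_{i\in I}t_i\le\sum_{i\in I^c}t_i-1\}}\prod\uu_i(t_i)\,\d t_i$ in Proposition~\ref{main} and in the final formula for $A(f)$ in Theorem~\ref{finalthm}. Your diagonal-only main term cannot equal $A(f)$: already for $n=1$ and $1<s_1<2$ your computation omits the term $-\int_{t\ge1}\^f(t)\,\d t$, i.e.\ it reproduces Rubinstein's restricted-range result rather than Gao's. The proper threshold at which dual terms first appear is $\sum s_j=1$, not $\sum s_j=2$; the latter is where a \emph{further} layer of complications would set in. You have, in effect, conflated the two thresholds. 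The missing idea is to run the functional equation of $Z_h$ to evaluate $S(\beta;\vec r)$ for $\beta\le\sum r_j-2$ (Section~4 of the paper) and then carry those terms forward as part of the main term; the paper explicitly flags this as its replacement for the Poisson-summation step in Gao's number-field argument.
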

To prove  Theorem~\ref{comparison thm} we use a similar approach to
that in \cite{Sarnakappendix, Gao} with some simplifications and
variations arising from our function field setting. In particular
Poisson summation, which is used critically  in
\cite{Sarnakappendix, Gao} is replaced by the functional equation of
the zeta-function $Z_h$.

What is crucial in Theorem~\ref{comparison thm} is that the bound on
the remainder term is uniform in $q$.
Taking the iterated limit  $\lim_{g\to \infty}(\lim_{q\to \infty})$
 of \eqref{comparison eq} and using the Katz-Sarnak result
\eqref{iterated limit} gives our main result on the quadratic
ensemble, as well as a corresponding result for the hyper-elliptic
ensemble:
\begin{cor}\label{main cor}
Let $f\in \mathcal S(\R^n)$ be even in all variables, and assume
that $\^f(u)$ is supported in the region $\sum_{j=1}^n |u_j|<2$.
Then for $q$ fixed,
\begin{equation}\label{hyperelliptic main cor}
 \lim_{g\to \infty} \ave{W_f^{(n)}}_{\mathcal H(2g+1,q)}
=\int_{\R^n} f(x)W^{(n)}_{\USp} (x) dx \;,
\end{equation}
and assuming GRH,
\begin{equation}\label{quadratic main cor}
\lim_{X\to \infty} \ave{W_f^{(n)}}_{\mathcal D(X)}  =\int_{\R^n}
f(x)W^{(n)}_{\USp} (x) dx \;.
\end{equation}
\end{cor}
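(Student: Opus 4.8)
The plan is to derive the Corollary not by attacking the combinatorics directly, but by using Theorem~\ref{comparison thm} together with the equidistribution inputs to \emph{prove} the identity $A(f)=\int_{\R^n}f(x)W^{(n)}_{\USp}(x)\,dx$, and then feeding that identity back into \eqref{comparison eq} and into Gao's result \eqref{Gao's result}. So the first step is: fix a product test function $f=\prod_{j=1}^n f_j$ with $\^f_j$ supported in $|u_j|<s_j$ and $\sum s_j<2$, fix the genus $g$, and let $q\to\infty$. By the Katz--Sarnak equidistribution \eqref{n-level KS} the left side of \eqref{comparison eq} tends to $\ave{W_f^{(n)}}_{\USp(2g)}$, while the right side equals $A(f)+O_f(\log g/g)$ with the implied constant \emph{independent of} $q$; hence $\ave{W_f^{(n)}}_{\USp(2g)}=A(f)+O_f(\log g/g)$. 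Now let $g\to\infty$ and apply the RMT computation \eqref{n-level RMT}: the left side tends to $\int_{\R^n}f(x)W^{(n)}_{\USp}(x)\,dx$ while the right side tends to $A(f)$, giving the desired identity. The uniformity in $q$ of the error term in Theorem~\ref{comparison thm} is exactly what licenses this iterated limit $\lim_{g\to\infty}\lim_{q\to\infty}$; without it the argument collapses.

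With the identity in hand, both assertions of the Corollary for product-form $f$ are formal. For \eqref{hyperelliptic main cor}: with $q$ fixed, \eqref{comparison eq} reads $\ave{W_f^{(n)}}_{\mathcal{H}(2g+1,q)}=A(f)+O_f(\log g/g)$, so letting $g\to\infty$ and substituting $A(f)=\int f\,W^{(n)}_{\USp}$ gives the claim. For \eqref{quadratic main cor}: under GRH, Gao's theorem \eqref{Gao's result} gives $\ave{W_f^{(n)}}_{\mathcal D(X)}=A(f)+o(1)$ as $X\to\infty$, and again we substitute the identity.

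The one genuinely remaining point is to remove the product-form restriction, i.e.\ to pass to an arbitrary even Schwartz $f$ with $\^f$ supported in the open region $\sum_{j=1}^n|u_j|<2$. Here I would run a standard approximation argument: since $\^f$ is compactly supported strictly inside that region, one can approximate it — in a norm controlling $\|\cdot\|_{L^1}$ along with finitely many derivatives — by finite linear combinations $\sum_k\prod_{j=1}^n\^f_{k,j}(u_j)$ whose supports remain inside a fixed compact subset of $\{\sum|u_j|<2\}$, for instance by expanding $\^f$ in a Fourier series on a box and multiplying by a fixed smooth cutoff. One then verifies that each of the three functionals $f\mapsto\ave{W_f^{(n)}}_{\mathcal D(X)}$, $f\mapsto\ave{W_f^{(n)}}_{\mathcal{H}(2g+1,q)}$ and $f\mapsto\int_{\R^n}f\,W^{(n)}_{\USp}$ is bounded in terms of such a norm of $\^f$, uniformly in $X$ (resp.\ in $g,q$) — precisely the kind of estimate produced by the explicit-formula manipulations behind Theorem~\ref{comparison thm}. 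A three-epsilon argument then transfers the equality from the dense class of product functions to all admissible $f$ and simultaneously shows the limits exist.

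I expect the main obstacle to lie in this last density step rather than anywhere upstream: one must ensure the approximating products never push the Fourier support out to the critical boundary $\sum|u_j|=2$ (where the method fails) and that the required uniform continuity estimates for all three families of functionals are genuinely available in the topology in which the approximation is performed. Once those are secured, the identification of $A(f)$ with the RMT density and the two limit statements for product $f$ follow purely formally from Theorem~\ref{comparison thm}, Gao's theorem, and the Deligne / Katz--Sarnak equidistribution results already recorded above.
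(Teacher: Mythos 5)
Your argument is correct and is essentially the proof given in the paper: take the iterated limit $q\to\infty$ then $g\to\infty$ of Theorem~\ref{comparison thm} (using the $q$-uniformity of the error), compare with \eqref{iterated limit} to deduce $A(f)=\int f\,W^{(n)}_{\USp}$ for product test functions, feed this back into Theorem~\ref{comparison thm} (fixed $q$, $g\to\infty$) for the hyperelliptic statement and into Gao's result \eqref{Gao's result} for the quadratic statement, and finish with a density/approximation argument for general $f$. In fact you spell out the approximation step and the role of $q$-uniformity somewhat more explicitly than the paper does, and your presentation avoids the slight label mix-up in the paper's final two sentences.
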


\begin{proof}
For both \eqref{hyperelliptic main cor} and  \eqref{quadratic main
cor}  we may assume that $f=\prod f_j(x_j)$, with each $\^f_j$ even
and supported on $|u_j|<s_j$ and $\sum s_j<2$, since any $f$
satisfying the conditions of the corollary can be approximated by a
linear combination of functions of this form. Now it follows from
Theorem \ref{comparison thm} and (\ref{iterated limit}) that
(\ref{quadratic main cor}) holds and
$$A(f)=A(f_1,...,f_n)=\int_{\R^n} f(x)W^{(n)}_{\USp} (x) dx.$$ This
is obtained by taking the limit $g\to\infty$ in
Theorem~\ref{comparison thm} and comparing with \eqref{iterated
limit}. Now \eqref{hyperelliptic main cor} follows from \eqref{Gao's
result}.
\end{proof}


\subsection{Further applications}
The method of this paper can in principle be used to compute
statistics of zeros of other families of L-functions, provided a
good function field analogue can be found. For instance, one of us
(A.E.) has given an alternate proof of the result of Rudnick and
Sarnak \cite{rudsar} that  the $n$-level correlation of the Riemann
zeros (that is of a {\em single} L-function) are given by Random
Matrix Theory, by making a comparison with a family of
Artin-Schreier curves \cite{Entin}. Very recently a different
combinatorial proof of the result of \cite{rudsar} was given by
Conrey and Snaith \cite{ConreySnaith}.

\section{Background on function field arithmetic}
We review some elements of the arithmetic of $\fq[x]$. A good
general reference for this material is \cite{Rosen}.
\subsection{Quadratic characters}
Let $P\in\mathbb{F}_{q}[x]$ be a prime polynomial. The quadratic
residue symbol $\left(\frac{f}{P}\right)\in \{\pm1\}$ is defined for
$f$ coprime to $P$ by
$$\left(\frac{f}{P}\right)\equiv f^{\frac{|P|-1}{2}} \pmod{P}.$$
For arbitrary monic $Q\in \mathbb{F}_{q}[x]$ and for $f$ coprime to
$Q$, the Jacobi symbol $(\frac{f}{Q})$ is defined by writing
$Q=\prod P_{j}$ as a product of prime polynomials and setting
$$\left(\frac{f}{Q}\right)=\prod\left(\frac{f}{P_{j}}\right).$$
If $f,Q$ are not coprime we set $(\frac{f}{Q})= 0$.

The law of quadratic reciprocity asserts that for
$A,B\in\mathbb{F}_{q}[x]$ monic polynomials
$$\left(\frac{B}{A}\right)=(-1)^{(\frac{q-1}{2})\deg A\deg B}\left(\frac{A}{B}\right).$$
For $D\in \mathbb{F}_{q}[x]$ a monic polynomial of positive degree
which is not a perfect square, we define the quadratic character
$\chi_{D}$ by
$$\chi_{D}(f)=\left(\frac{D}{f}\right).$$
\subsection{L-functions} For the quadratic character $\chi_{D}$, the corresponding L-function is defined for $|u|<\frac{1}{q}$ by
$$
\mathcal{L}(u,\chi_{D}):=\prod_{P~\pr}(1-\chi_{D}(P)u^{\deg P})^{-1}
=\sum_{\beta\geq 0}A_{D}(\beta)u^{\beta},
$$
with
\begin{equation}\label{def of AD}
A_{D}(\beta):=\sum_{\substack{\deg B=\beta\\B~\monic}}\chi_{D}(B)\;.
\end{equation}
If $D$ is nonsquare of positive degree, then $A_{D}(\beta)=0$ for
$\beta\geq\deg D$ and hence the L-function is in fact a polynomial
of degree at most $\deg D-1.$

Now, assume that $D$ is also square-free. Then
$\mathcal{L}(u,\chi_{D})$ has a trivial zero at $u=1$ if and only if
$\deg D$ is even. Thus
$$\mathcal{L}(u,\chi_{D})=(1-u)^{\lambda}\mathcal{L}^{\ast}(u,\chi_{D}),~~~~\lambda=\left\{ \begin{array}{ll}
1 &\mbox{$\deg D ~\mbox{even},$}\\
0 &\mbox{$\deg D~ \mbox{odd},$}
\end{array}
\right.$$ where $\mathcal{L}^{\ast}(u,\chi_{D})$ is a polynomial of
even degree
$$2\delta=\deg D-1-\lambda$$
satisfying the functional equation
\begin{equation}\label{functional equation}
\mathcal{L}^{\ast}(u,\chi_{D})=(qu^{2})^{\delta}\mathcal{L}^{\ast}(\frac{1}{qu},\chi_{D}).
\end{equation}
We write
$$\mathcal{L}^{\ast}(u,\chi_{D})=\sum_{\beta=0}^{2\delta}A^{\ast}_{D}(\beta)u^{\beta},$$
where $A^{\ast}_{D}(0)=1,$ and the coefficients
$A^{\ast}_{D}(\beta)$ satisfy
\begin{equation}\label{duality_coefficients}
A^{\ast}_{D}(\beta)=q^{\beta-\delta}A^{\ast}_{D}(2\delta-\beta).
\end{equation}
In particular, the leading coefficient is
$A^{\ast}_{D}(2\delta)=q^{\delta}.$

\subsection{The explicit formula}
For $h$  monic, square-free, and of positive degree, the zeta
function of the hyperelliptic curve $y^{2}=h(x)$ is
\begin{equation}
Z_{h}(u)=\frac{\mathcal{L}^{\ast}(u,\chi_{h})}{(1-u)(1-qu)}.
\end{equation}
By the Riemann Hypothesis (proved by Weil) we may write
\begin{equation}\label{def of L*}
\mathcal{L}^{\ast}(u,\chi_{h})=\det (I-u\sqrt{q}\Theta_{h})
\end{equation}
for a unitary $2g\times 2g$ matrix $\Theta_{h}$. Taking a
logarithmic derivative of \eqref{def of L*}
gives
\begin{equation}\label{explicit formula}
-\tr
\Theta_{h}^{n}=\frac{\lambda}{q^{n/2}}+\frac{1}{q^{n/2}}\sum_{\deg
f=n }\Lambda(f)\chi_{h}(f).
\end{equation}
\subsection{The Weil bound}
Assume that $B$ is monic of positive degree and not a perfect
square. Then the Riemann Hypothesis and \eqref{explicit formula}
gives Weil's bound for the character sum over primes:
\begin{equation}\label{weil}
\Big|\sum_{\substack{\deg P=n\\ P \mbox{
}\pr}}\left(\frac{B}{P}\right)\Big|\ll\frac{\deg B}{n}q^{n/2}.
\end{equation}


\subsection{Averaging over $\mathcal{H}_{2g+1}$}
The number of square-free monic polynomials of degree $d$ in
$\mathbb{F}_{q}[x]$ is  $q^d(1-\frac 1q)$ for $d\geq 2$, and in
particular
we have, for $g\geq 1$,
\begin{equation*}
\#\mathcal{H}_{2g+1}=(q-1)q^{2g}.
\end{equation*}

We can execute the averaging over $\mathcal{H}_{2g+1}$ using the
M\"{o}bius function $\mu$ of $\mathbb{F}_{q}[x]$, by recalling that
\begin{equation*}
\sum_{A^{2}|h}\mu(A)= \left\{ \begin{array}{ll}
1 &\mbox{$h$ is square-free,}\\
0 &\mbox{otherwise,}
\end{array}
\right.
\end{equation*}
and hence
\begin{equation}\label{expected value}
\langle
F(h)\rangle=\frac{1}{(q-1)q^{2g}}\sum_{2\alpha+\beta=2g+1}\sum_{\deg
B=\beta}\sum_{\deg A=\alpha}\mu(A)F(A^{2}B),
\end{equation}
the sum being over all monic $A,B.$

For a given polynomial $f\in\mathbb{F}_{q}[x]$ apply \eqref{expected
value} to the quadratic character $h\mapsto \chi_{h}(f)$ to get
\begin{equation}\label{Averaging quadratic characters}
\langle
\chi_{h}(f)\rangle=\frac{1}{(q-1)q^{2g}}\sum_{2\alpha+\beta=2g+1}\sum_{\substack{\deg
A=\alpha\\ \gcd(A,f)=1}}\mu(A)\sum_{\deg
B=\beta}\left(\frac{B}{f}\right).
\end{equation}

\section{A sum of M\"obius values.}
Define
\begin{equation}
\sigma(f,\alpha):=\sum_{\substack{\deg~A=\alpha
\\\gcd(A,f)=1}}\mu(A).
\end{equation}
Note that $\sigma(f,\alpha)$ depends only on the degrees of the
primes dividing $f$, hence we can write for $P_{1},\ldots,P_{n}$
distinct primes of degrees $r_1,\dots, r_n$ respectively:
$\sigma(\prod_{i=1}^{n}P_{i},\alpha)=\sigma(\vec r;\alpha)$.
\begin{lemma}\label{lemm: mobius}
Assume $\min(r_1,\dots, r_n)\geq 2$, then
\begin{equation}
\sigma(\vec r;\alpha)=\left\{ \begin{array}{ll}
1 &\mbox{$\alpha=0$,}\\
-q &\mbox{$\alpha=1$,}\\
0 &\mbox{$2\leq \alpha<\min(r_1,\dots, r_n)$.}
\end{array}
\right.
\end{equation}
In any case we have a bound
\begin{equation}\label{bound on sigma}
|\sigma(\vec r,\alpha)|\leq (q+1)\frac{\alpha^n }{\prod_{j=1}^n r_j}
\;.
\end{equation}
\end{lemma}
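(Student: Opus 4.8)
The plan is to compute $\sigma(\vec r;\alpha)$ by a generating-function identity and then extract both the exact small-$\alpha$ values and the general bound. First I would form the Dirichlet-series-style generating function in a formal variable $u$:
$$\sum_{\alpha\geq 0}\sigma(\vec r;\alpha)u^\alpha = \sum_{\substack{A\text{ monic}\\ \gcd(A,\prod P_i)=1}}\mu(A)u^{\deg A} = \prod_{\substack{P\text{ prime}\\ P\neq P_1,\dots,P_n}}\bigl(1-u^{\deg P}\bigr).$$
Since $\sum_{A\text{ monic}}\mu(A)u^{\deg A}=\prod_P(1-u^{\deg P}) = 1-qu$ (the inverse of the zeta function $\prod_P(1-u^{\deg P})^{-1}=1/(1-qu)$ of $\fq[x]$), the restricted product equals
$$\sum_{\alpha\geq 0}\sigma(\vec r;\alpha)u^\alpha = (1-qu)\prod_{i=1}^n\bigl(1-u^{r_i}\bigr)^{-1}.$$

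From this closed form the exact values are immediate: writing $\prod_i(1-u^{r_i})^{-1}=\sum_{m\geq 0}c_m u^m$ with $c_m\geq 0$, $c_0=1$, and $c_m=0$ for $1\leq m<\min r_i$ (the smallest positive exponent appearing in any factor is $\min r_i$), we get $\sigma(\vec r;\alpha)=c_\alpha - q\,c_{\alpha-1}$, which gives $\sigma=1$ at $\alpha=0$, $\sigma=-q$ at $\alpha=1$, and $\sigma=0$ for $2\leq\alpha<\min(r_1,\dots,r_n)$, since both $c_\alpha$ and $c_{\alpha-1}$ vanish in that range. For the general bound \eqref{bound on sigma} I would estimate $|\sigma(\vec r;\alpha)|\leq c_\alpha + q\,c_{\alpha-1}\leq (q+1)c_\alpha$ if $c_{\alpha-1}\leq c_\alpha$, or more robustly $|\sigma(\vec r;\alpha)|\le (q+1)\max(c_\alpha,c_{\alpha-1})$; then bound the partition-type coefficient $c_m$, which counts the number of ways to write $m=\sum_i r_i k_i$ with $k_i\geq 0$. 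Crudely, $c_m\leq \#\{(k_1,\dots,k_n): k_i\geq 0,\ r_i k_i\leq m\} \leq \prod_{i=1}^n\bigl(\lfloor m/r_i\rfloor+1\bigr)$.

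The main obstacle is making this last combinatorial estimate yield exactly the clean bound $(q+1)\alpha^n/\prod r_j$ rather than something with additive $+1$'s. For $m=\alpha$ with $\alpha\ge 1$ one has $\lfloor\alpha/r_i\rfloor+1\le \alpha/r_i + 1 \le 2\alpha/r_i$ when $r_i\le\alpha$, but if some $r_i>\alpha$ then $c_\alpha$ simply has no contribution from that index and one should interpret the bound as over the indices with $r_i\le\alpha$; a small case analysis handles $\alpha<\min r_i$ (where $\sigma$ is $0$, $1$, or $-q$ and the bound is easily checked directly) versus $\alpha\ge \min r_i$. I would organize it so that for $\alpha\geq 1$ we always have $c_\alpha \le \alpha^n/\prod_j r_j$ by a telescoping/induction argument on $n$ (peeling off one variable $r_n$: $c^{(n)}_\alpha=\sum_{k\geq 0}c^{(n-1)}_{\alpha-r_n k}$, bounded by $(\lfloor\alpha/r_n\rfloor+1)\max_{\beta\le\alpha}c^{(n-1)}_\beta$), and similarly $c_{\alpha-1}\le\alpha^n/\prod_j r_j$, whence $|\sigma(\vec r;\alpha)|\le (q+1)\alpha^n/\prod_j r_j$. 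The generating-function step is the conceptual heart; the bound is then bookkeeping, with the only delicacy being the floor-function rounding, which I expect to absorb into the slack between $\alpha/r_i+1$ and the target.
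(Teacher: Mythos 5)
Your generating-function identity is exactly the one the paper invokes, and the derivation of the exact values $1,-q,0$ from $\sigma(\vec r;\alpha)=c_\alpha-qc_{\alpha-1}$ is correct. The gap is in the bound. The intermediate claim $c_\alpha\le\alpha^n/\prod_j r_j$ for $\alpha\ge 1$ is false: take $n=2$, $r_1=r_2=2$, $\alpha=2$; then $c_2=2$ (the coefficient of $u^2$ in $(1-u^2)^{-2}$), whereas $\alpha^n/\prod_j r_j=1$. The telescoping step you sketch bounds $c^{(n)}_\alpha$ by $(\lfloor\alpha/r_n\rfloor+1)\max_\beta c^{(n-1)}_\beta$, and the $+1$'s do not ``absorb into the slack'': $\lfloor\alpha/r_j\rfloor+1\ge 1$ even when $r_j>\alpha$, while $\alpha/r_j<1$, so these errors accumulate to a multiplicative loss of order $\prod_j(1+r_j/\alpha)$, which is unbounded in terms of $n$ and the $r_j$. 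So the telescoping argument as proposed cannot deliver the stated inequality.

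In fact the inequality \eqref{bound on sigma}, read literally, is itself not correct: with $n=2$, $r_1=2$, $r_2>2(q+1)$, $\alpha=2$ one has $\sigma(\vec r;2)=c_2-qc_1=1$ while $(q+1)\alpha^2/(r_1r_2)=2(q+1)/r_2<1$; it also trivially fails at $\alpha=0,1$. What the generating function cleanly yields is the elementary estimate $|\sigma(\vec r;\alpha)|\le c_\alpha+qc_{\alpha-1}\le(q+1)\prod_{j=1}^n\bigl(\lfloor\alpha/r_j\rfloor+1\bigr)$, since $c_m$ counts non-negative integer solutions of $\sum_j k_jr_j=m$ with each $k_j\le\lfloor m/r_j\rfloor$. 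This gives \eqref{bound on sigma} only up to a constant depending on $n$, which is all that is actually used later (the implied constants in Proposition~\ref{prop:distinct prime first eval} are allowed to depend on $m=n$). Your write-up should state and prove this form of the bound rather than the literal one; as it stands, the key intermediate step is false and the proof does not go through.
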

\begin{proof}

The lemma 
follows from the identity
\begin{equation}
\sum_{\alpha=0}^\infty \sigma(f,\alpha)X^\alpha =
\sum_{\gcd(A,f)=1}\mu(A)X^{\deg A} = \frac{1-qX}{\prod_{P\mid
f}(1-X^{\deg P})},
\end{equation}
the product being over all prime divisors of $f$.
\end{proof}

For distinct primes $P_1,\dots,P_n$ of degrees $\deg P_j=r_j$, we
define
\begin{equation}\label{def of phi_delta}
\phi_\delta(\vec r) := \sum_{\substack{D\mid \prod P_j\\\deg D\leq
\delta}}\frac{\mu(D)}{q^{\deg D}}.
\end{equation}
As the notation signifies, $\phi_\delta(\vec r)$ depends only on the
degrees of the primes $P_j$, and we can rewrite it as
\begin{equation}
\phi_\delta(\vec r) = \sum_{\substack{I\subset \mathbf n\\
\sigma(I)\leq \delta}} (-1)^{|I|} q^{-\sigma(I)}
\end{equation}
where for a subset $I\subset \mathbf n = \{1,\dots,n\}$ we define
$$\sigma(I):=\sum_{i\in I} r_i$$
and denote by $|I|$ the cardinality of the index set $I$.

Assume now that $\beta$ is odd and $\sum r_j$ is even, and $\sum
r_j>\beta$. Define
\begin{equation}\label{def of Phi_beta}
\Phi_\beta(\vec r):=-q^L \phi_L(\vec r) + (q-1)\sum_{l=0}^{L-1} q^l
\phi_l(\vec r),
\end{equation}
where $2L=\sum r_j-1-\beta$.

\begin{lemma}\label{lemPhi}
Assume  $\beta$ is odd,  $\sum r_j$ is even, and $\beta\leq \sum r_j
-2$. Then
\begin{equation}
\Phi_\beta(\vec r) = -\sum_{\substack{I\subseteq \mathbf n\\
\sigma(I)\leq L}} (-1)^{|I|}.
\end{equation}

\end{lemma}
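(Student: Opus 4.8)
The plan is to start from the definition \eqref{def of Phi_beta} and substitute the closed form for $\phi_l(\vec r)$ given just above, namely $\phi_l(\vec r) = \sum_{I\subseteq\mathbf n,\ \sigma(I)\le l}(-1)^{|I|}q^{-\sigma(I)}$. Plugging this in turns $\Phi_\beta(\vec r)$ into a double sum over subsets $I$ and over the index $l$, and the whole argument reduces to interchanging the order of summation and, for each fixed $I$, summing a finite geometric-type series in $l$. So the first step is bookkeeping: write
\begin{equation*}
\Phi_\beta(\vec r) = -q^L\!\!\sum_{\sigma(I)\le L}\!(-1)^{|I|}q^{-\sigma(I)} + (q-1)\sum_{l=0}^{L-1}q^l\!\!\sum_{\sigma(I)\le l}\!(-1)^{|I|}q^{-\sigma(I)},
\end{equation*}
and then swap the $l$-sum with the $I$-sum in the second term.

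The second step is the inner computation. For a fixed subset $I$ with $\sigma(I)\le L$, the coefficient of $(-1)^{|I|}q^{-\sigma(I)}$ coming from the second term is $(q-1)\sum_{l=\sigma(I)}^{L-1}q^l = q^L - q^{\sigma(I)}$, using the standard telescoping identity $(q-1)(q^a+\dots+q^{b-1}) = q^b - q^a$. Adding the contribution $-q^L(-1)^{|I|}q^{-\sigma(I)}$ from the first term, the net coefficient of $(-1)^{|I|}q^{-\sigma(I)}$ is $(q^L - q^{\sigma(I)}) - q^L = -q^{\sigma(I)}$. Hence every surviving term collapses: $(-1)^{|I|}q^{-\sigma(I)}\cdot(-q^{\sigma(I)}) = -(-1)^{|I|}$, and summing over all $I$ with $\sigma(I)\le L$ gives exactly $-\sum_{\sigma(I)\le L}(-1)^{|I|}$, which is the claim. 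One should note that subsets $I$ with $\sigma(I)>L$ contribute nothing at all, since they fail the constraint $\sigma(I)\le l$ for every $l\le L-1$ and also fail $\sigma(I)\le L$ in the first term; this is where the hypothesis $\beta\le\sum r_j-2$ (equivalently $L\ge 0$, and $L$ a nonnegative integer because $\sum r_j$ is even and $\beta$ is odd) is used to make sure $L$ is a well-defined nonnegative integer and the ranges are nonempty/consistent.

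I expect the only thing requiring a little care — the "main obstacle," such as it is — to be the edge cases of the geometric sum, namely when $\sigma(I)=L$ (so the inner sum $\sum_{l=\sigma(I)}^{L-1}$ is empty and contributes $0$, matching $q^L-q^{\sigma(I)}=0$) and when $L=0$ (so the entire second term is an empty sum and $\Phi_\beta(\vec r)=-q^0\sum_{\sigma(I)\le 0}(-1)^{|I|}=-1$, consistent with only $I=\emptyset$ surviving). Both are handled automatically by the telescoping identity once one is careful that it holds with the convention that an empty sum is zero. Beyond that the proof is a one-line manipulation, so I would present it compactly: substitute, interchange, telescope, and read off the answer. $\hfill\square$
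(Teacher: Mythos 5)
Your proof is correct and follows essentially the same route as the paper: substitute the expansion of $\phi_l(\vec r)$ into the definition of $\Phi_\beta(\vec r)$, interchange the order of the $l$-sum and the $I$-sum, and telescope the resulting geometric series $(q-1)\sum_{l=\sigma(I)}^{L-1}q^l = q^L - q^{\sigma(I)}$ so that the coefficient of each $(-1)^{|I|}q^{-\sigma(I)}$ collapses to $-q^{\sigma(I)}$. The extra remarks on the edge cases $\sigma(I)=L$ and $L=0$ are a nice sanity check but not a deviation from the paper's argument.
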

\begin{proof}
From the definition,
\begin{equation}
\Phi_\beta(\vec r) = -q^L\sum_{\sigma(I)\leq L}(-1)^{|I|}
q^{-\sigma(I)}  + (q-1)\sum_{l=0}^{L-1} q^l\sum_{\sigma(I)\leq
l}(-1)^{|I|} q^{-\sigma(I)}.
\end{equation}
Changing order of summation, we get
\begin{equation}\label{Change order}
\Phi_\beta(\vec r) = \sum_{\substack{I\subseteq \mathbf
n\\\sigma(I)\leq L}} (-1)^{|I|} q^{-\sigma(I)} \left\{
-q^L+(q-1)\sum_{\sigma(I)\leq l\leq L-1}q^l \right\}
\end{equation}
Summing the geometric series gives
\begin{equation}
-q^L+(q-1)\sum_{\sigma(I)\leq l\leq L-1}q^l = -q^{\sigma(I)}
\end{equation}
and inserting in \eqref{Change order} proves the claim.
\end{proof}

\section{Multiple character sums}
Define
\begin{equation}\label{character sums}
S(\beta;\vec r):=\sum_{\substack{ \deg B=\beta\\
B~\monic}} \sum_{\substack{ \deg P_{j}=r_{j}\\P_{i}\neq
P_{j}}}\left(\frac{B}{\prod_{j=1}^{n}P_{j}}\right).
\end{equation}
These sums will play a crucial role in what follows.

By quadratic reciprocity
$$S(\beta;\vec r)=(-1)^{\frac{q-1}{2}\beta(\sum
r_{j})}\sum_{\substack{ \deg P_{j}=r_{j}\\P_{i}\neq
P_{j}}}A_{\prod_{j=1}^{n}P_{j}}(\beta),$$ where the sum is over
distinct primes $P_{j}$ and $A_{F}(\beta)$ given by \eqref{def of
AD} is the coefficient of the L-polynomial
$\mathcal{L}(u,\chi_{F})$. Since the L-function is a polynomial of
degree $\deg F -1$, we have
\begin{lemma}\label{lemm: s equals zero}
If $\beta\geq\sum_{j=1}^{n}r_{j}$ then $S(\beta;\vec r)=0$.
\end{lemma}

\subsection{Duality}
\subsubsection{Duality for $\sum r_{j}$ odd} Assume $\sum r_{j}$ is
odd and $\beta\leq \sum r_{j}-1$. Let $P_{1},\dots, P_n$  be
distinct primes. Then
$\mathcal{L}(u,\chi_{\prod_{j=1}^{n}P_{j}})=\mathcal{L}^{\ast}(u,\chi_{\prod_{j=1}^{n}P_{j}}),$
and so the coefficients
$$A_{\prod_{j=1}^{n}P_{j}}(\beta)=A^{\ast}_{\prod_{j=1}^{n}P_{j}}(\beta)$$
coincide. Therefore, from \eqref{duality_coefficients} we have
$$A_{\prod_{j=1}^{n}P_{j}}(\beta)=A_{\prod_{j=1}^{n}P_{j}}\left(\textstyle\sum
r_{j}-1-\beta\right)q^{\beta-\frac{\sum r_{j}-1}{2}}.$$ Hence if
$\sum r_{j}$ is odd and $\beta\leq \sum r_{j}-1$ then
\begin{equation}\label{duality odd}
S(\beta;\vec r)=q^{\beta-\frac{\sum
r_{j}-1}{2}}S\left(\textstyle\sum r_{j}-1-\beta;\vec r\right).
\end{equation}

\subsubsection{Duality for $\sum r_{j}$ even} Assume $\sum r_{j}$ is
even and $\beta\leq \sum r_{j}-2.$ Let $P_{1},\dots, P_n$ be
distinct primes. Then the equation
$$\mathcal{L}(u,\chi_{\prod_{j=1}^{n}P_{j}})=(1-u)\mathcal{L}^{\ast}(u,\chi_{\prod_{j=1}^{n}P_{j}})$$
implies (here we write $A(\beta)$ for $A_{\prod P_j}(\beta)$)
$$
A(0) =A^{\ast}(0)=1,
$$
$$
A(\textstyle\sum r_{j}-1) =-A^{\ast}(\textstyle\sum r_{j}-2),
$$
$$
A^{\ast}(\beta) =A(\beta)+A(\beta-1)+\cdots+A(0),
$$
and
\begin{equation}\label{4.1 even first}
~~~A(\beta)=A^{\ast}(\beta)-A^{\ast}(\beta-1).
\end{equation}
 From (\ref{duality_coefficients}) we have
\begin{equation}\label{4.1 even second}
A^{\ast}(\beta)=q^{\beta-\frac{\textstyle\sum
r_{j}-2}{2}}A^{\ast}(\textstyle\sum r_{j}-2-\beta).
\end{equation}
Hence
$$A^{\ast}( \sum r_{j}-2)=q^{\frac{ \sum r_{j}-2}{2}},$$
and so
$$A(\textstyle\sum r_{j}-1)=-q^{\frac{ \sum r_{j}-2}{2}}.$$
Therefore, if $\sum r_{j}$ is even then
\begin{equation}\label{s-1 even}
\begin{split}
S( \sum r_{j}-1;\vec r)&=\sum_{\substack{ \deg
P_{j}=r_{j}\\P_{i}\neq P_{l}}}-q^{\frac{\sum r_{j}-2}{2}}
\\
&=-q^{\frac{\sum r_{j}-2}{2}}\pi(r_{1})\cdots
\pi(r_{n})+O\left(q^{\frac{3\sum r_{j}}{2}-\min r_j}\right).
\end{split}
\end{equation}
If $\beta\leq \sum r_{j}-2$ then by (\ref{4.1 even first}) and
(\ref{4.1 even second}) we have
\begin{equation}
A (\beta)= q^{\beta-\frac{\sum r_{j}}{2}} \left(-A (\sum
r_{j}-1-\beta) +(q-1)\sum_{l=0}^{\sum r_{j}-2-\beta}A (l) \right).
\end{equation}
Hence
\begin{equation}\label{duality even}
S(\beta;\vec r)=q^{\beta-\frac{\sum r_{j}}{2}}\left(
                                                              -S(\sum
r_{j}-1-\beta;\vec r)+(q-1)\sum_{l=0}^{\sum
r_{j}-2-\beta}S(l;\vec r) \\
                                                          \right).
\end{equation}

\subsection{Estimates for $S(\beta;\vec r)$}



For the convenience of writing we assume from now on that
$$r_{1}=\min(r_1,\dots, r_n)\;.$$

\begin{lemma}\label{lemm: first estimation s}
\begin{equation*}
S(\beta;\vec r)=\eta_{\beta}q^{\beta/2}\phi_{ \beta/2}(\vec r)
\prod\pi(r_{j}) + O\left(\phi_{\beta/2}(\vec r)\beta^{n}q^{\max(\sum
r_{j}+\frac{\beta}{2}-r_{1},\sum \frac{r_{j}}{2}+\beta)}\right),
\end{equation*}
where $\phi_\delta(\vec r)$ is as defined in \eqref{def of
phi_delta}.
\end{lemma}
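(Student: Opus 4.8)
The plan is to evaluate $S(\beta;\vec r)$ directly, without invoking the duality relations \eqref{duality odd}--\eqref{duality even}, by isolating the contribution of the perfect-square values of $B$. Since $\leg{B}{\prod P_j}=\prod_{j}\leg{B}{P_j}$ by multiplicativity of the Jacobi symbol,
\[
S(\beta;\vec r)=\sum_{\substack{\deg B=\beta\\ B~\monic}}\ \sum_{\substack{\deg P_j=r_j\\ P_i\neq P_j}}\ \prod_{j=1}^{n}\leg{B}{P_j},
\]
and I would split the outer sum according to whether $B$ is a perfect square or not.

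For the perfect squares, write $B=C^{2}$; this is possible only when $\beta$ is even, which is the source of the factor $\eta_\beta$ (equal to $1$ for $\beta$ even and $0$ for $\beta$ odd). As $\leg{C^{2}}{P}$ equals $1$ when $P\nmid C$ and $0$ otherwise, this part of $S(\beta;\vec r)$ equals $\eta_\beta\sum_{\deg C=\beta/2}\#\{(P_1,\dots,P_n)\ \text{distinct}:P_j\nmid C\ \text{for all}\ j\}$. Interchanging the order of summation this is $\eta_\beta\sum_{P\ \text{distinct}}\#\{C~\monic:\deg C=\beta/2,\ \gcd(C,\prod P_j)=1\}$, and by inclusion--exclusion over the divisors $D\mid\prod P_j$ the inner count equals $\sum_{D\mid\prod P_j,\ \deg D\le\beta/2}\mu(D)q^{\beta/2-\deg D}=q^{\beta/2}\phi_{\beta/2}(\vec r)$, which depends only on $\vec r$ (this is precisely the sum in \eqref{def of phi_delta}). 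Hence the perfect-square part is $\eta_\beta q^{\beta/2}\phi_{\beta/2}(\vec r)$ times the number of ordered $n$-tuples of distinct primes of degrees $r_1,\dots,r_n$, and this number is $\prod_j\pi(r_j)$ minus the number of tuples containing a repeated prime. A repeat forces two of the $r_j$ to be equal, and since $\pi(r)\asymp q^{r}/r$, the number of such tuples is $\ll_n q^{\sum r_j-r_1}$ (the smallest degree occurring twice being at least $r_1=\min_j r_j$). This produces the main term $\eta_\beta q^{\beta/2}\phi_{\beta/2}(\vec r)\prod_j\pi(r_j)$ together with an error $\ll_n\phi_{\beta/2}(\vec r)q^{\beta/2+\sum r_j-r_1}$, the first of the two quantities in the maximum.

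It remains to bound the non-square part $\sum_{B\ \text{non-square},\,\deg B=\beta}\ \sum_{P\ \text{distinct}}\ \prod_j\leg{B}{P_j}$, and here Weil's bound enters. For each fixed $B$ I would expand the inner sum over distinct tuples by M\"obius inversion over the partition lattice of $\{1,\dots,n\}$: only those partitions all of whose blocks are monochromatic in degree survive, and a block of size $k$ with common degree $\rho$ contributes the factor $\sum_{\deg P=\rho}\leg{B}{P}^{k}$, which for odd $k$ equals $\sum_{\deg P=\rho}\leg{B}{P}$ and for even $k$ equals $\#\{P:\deg P=\rho,\ P\nmid B\}$. Bounding the odd-block factors by \eqref{weil} (legitimate since $B$ is monic of positive degree and not a perfect square), $\bigl|\sum_{\deg P=\rho}\leg{B}{P}\bigr|\ll(\beta/\rho)q^{\rho/2}$, and the even-block factors trivially by $\pi(\rho)\ll q^{\rho}/\rho$, one checks that for every such partition the total power of $q$ coming from the prime sums is at most $\tfrac12\sum_j r_j$: within a fixed degree $\rho$, a partition of its $m$ indices into blocks of sizes $k_i$ produces exponent $\tfrac{\rho}{2}\bigl(\#\{k_i\ \text{odd}\}+2\#\{k_i\ \text{even}\}\bigr)\le\tfrac{\rho}{2}\sum_i k_i=\tfrac{\rho m}{2}$, and summing over $\rho$ gives $\tfrac12\sum_j r_j$. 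At most a factor $\beta^{n}$ is lost through the odd-block Weil estimates, so each partition contributes $\ll_n\beta^{n}q^{\frac12\sum r_j}$ per $B$; summing over the $O_n(1)$ partitions and the at most $q^{\beta}$ non-square $B$ yields $\ll_n\beta^{n}q^{\beta+\frac12\sum r_j}$, the second quantity in the maximum. Combining the two parts establishes the lemma, and since every implied constant above is absolute or depends only on $n$ (the Weil constant, the number of set partitions of $\{1,\dots,n\}$), the bound is uniform in $q$, as the application requires. The step I expect to be the main obstacle is exactly this last bookkeeping for non-square $B$: Weil's bound supplies cancellation only for odd powers of the quadratic symbol, so even-size blocks carry no saving in the prime variable, and one must verify that their combined effect never pushes the exponent of $q$ beyond $\tfrac12\sum_j r_j$.
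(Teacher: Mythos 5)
Your proposal follows the same route as the paper's proof: split $S(\beta;\vec r)$ according to whether $B$ is a perfect square, count the square contributions by inclusion--exclusion to obtain the main term $\eta_\beta q^{\beta/2}\phi_{\beta/2}(\vec r)\prod\pi(r_j)$ together with the error $\ll_n \phi_{\beta/2}(\vec r)\,q^{\beta/2+\sum r_j-r_1}$ from non-distinct tuples, and bound the non-square contributions by Weil's estimate and the trivial count $q^\beta$ of such $B$, yielding the second error $\ll_n\beta^n q^{\beta+\frac12\sum r_j}$. The paper states the non-square bound without elaboration; you supply the justification it leaves implicit, namely the M\"obius inversion over the partition lattice that converts the sum over \emph{distinct} prime tuples into products over blocks, with Weil applied to odd-size blocks and the trivial bound $\pi(\rho)\ll q^\rho/\rho$ to even-size blocks, together with the bookkeeping check that $\#\{k_i\text{ odd}\}+2\#\{k_i\text{ even}\}\le\sum k_i$ keeps the exponent at $\le\frac12\sum r_j$. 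That is correct and a genuine (if small) gap-filling; otherwise the argument coincides with the one in the paper.
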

\begin{proof}
 We write
\begin{align*}S(\beta;\vec r)=\eta_{\beta}\sum_{\substack{\deg
B=\beta\\B=\Box}}\sum_{\substack {\deg P_{j}=r_{j} \\
P_{i}\neq P_{l}}}\left(\frac{B}{\prod_{j=1}^{n}P_{j}}\right)
+\sum_{\substack{{\deg B=\beta}\\B\neq\Box}}\sum_{\substack{ \deg
P_{j}=r_{j}\\P_{i}\neq
P_{l}}}\left(\frac{B}{\prod_{j=1}^{n}P_{j}}\right),
\end{align*}
where the squares only occur when $\beta$ is even. We write the sum
over squares $B=C^{2}$ as
$$
\sum_{\substack {\deg P_{j}=r_{j}\\P_{i}\neq P_{l}}}\sum_{\deg C=
\frac{\beta}{2}}\left(\frac{C^{2}}{\prod_{j=1}^{n}P_{j}}\right) \;.
$$
The inner sum is the number of C's coprime to
$\prod_{j=1}^{n}P_{j},$ which is
$q^{\frac{\beta}{2}}\phi_{\beta/2}(\vec r)$ (this is seen by the
definition (\ref{def of phi_delta}) of $\phi_{\beta/2}$ and
inclusion-exclusion). Summing over the distinct $P_{j}$ we get that
the sum over square $B$'s is
$$
q^{\frac{\beta}{2}}\pi(r_{1})\cdots\pi(r_{n})\phi_{\frac{\beta}{2}}(\vec
r)+ O\left(\phi_{\beta/2}(\vec r)q^{\sum
r_{j}+\frac{\beta}{2}-r_{1}}\right).
$$
For $B$ not a perfect square, we use Weil's theorem \eqref{weil}.
Hence summing over all non-square $B$ of degree $\beta,$ of which
there are at most $q^{\beta}$, gives
$$\sum_{\substack{{\deg B=\beta}\\B\neq\Box}}\sum_{\substack{ \deg
P_{j}=r_{j}\\P_{i}\neq
P_{l}}}\left(\frac{B}{\prod_{j=1}^{n}P_{j}}\right)\ll
\beta^{n}q^{\sum \frac{r_{j}}{2}+\beta}$$ and with the contribution
of square $B$, this concludes the lemma.
\end{proof}

By using duality, we can improve the estimate of the lemma when
$\beta$ is odd and $\sum r_{j}<2\beta$.

\begin{prop}\label{prop: s eval}
Assume $\beta$ is odd, and $\beta\leq\sum r_{j} -2$. Then
\begin{equation*}
S(\beta;\vec r)=  \eta_{\textstyle\sum r_{j}}q^{\beta}
\Phi_\beta(\vec r) \prod \frac{\pi(r_j)}{q^{r_j/2}}
 +O((\textstyle\sum r_{j})^{n}q^{\textstyle\sum r_{j}})
\end{equation*}
where $\Phi_\beta(\vec r)$ is given in \eqref{def of Phi_beta}.
\end{prop}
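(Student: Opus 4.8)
\medskip

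The plan is to start from the duality relation \eqref{duality even} for the case $\sum r_j$ even, which expresses $S(\beta;\vec r)$ for $\beta$ odd with $\beta\leq \sum r_j-2$ in terms of the ``short'' values $S(\ell;\vec r)$ for $0\leq \ell\leq \sum r_j-1-\beta$. The key point is that the hypothesis $\beta\leq \sum r_j-2$ forces $\sum r_j-1-\beta < \beta$, so every index $\ell$ appearing on the right of \eqref{duality even} satisfies $\ell \leq \sum r_j-1-\beta$, which is a genuinely short length relative to $\sum r_j$; in this regime the estimate of Lemma~\ref{lemm: first estimation s} is already good, because the main term there involves $q^{\ell/2}$ rather than $q^{\ell}$. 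So the first step is to substitute the asymptotic of Lemma~\ref{lemm: first estimation s} for each $S(\ell;\vec r)$ (and for $S(\sum r_j-1-\beta;\vec r)$) into the right-hand side of \eqref{duality even}.

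\medskip

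The second step is bookkeeping of the main terms. After substitution, the prefactor $q^{\beta-\sum r_j/2}$ multiplies a sum of terms of the form $\eta_\ell q^{\ell/2}\phi_{\ell/2}(\vec r)\prod\pi(r_j)$; collecting these produces exactly the combination $-q^L\phi_L(\vec r) + (q-1)\sum_{\ell=0}^{L-1}q^\ell \phi_\ell(\vec r)$ with $2L=\sum r_j-1-\beta$, which is the definition \eqref{def of Phi_beta} of $\Phi_\beta(\vec r)$ — here one uses that $\eta_\ell=1$ for $\ell$ even (and the square-contribution vanishes for $\ell$ odd, so only even $\ell=2l$ contribute, with $q^{\ell/2}=q^l$), and that $\eta_{\sum r_j-1-\beta}=\eta_{2L}=1$. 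Pulling out $\prod\pi(r_j)$ and writing $q^{\beta-\sum r_j/2}\prod\pi(r_j)=q^\beta\prod \pi(r_j)/q^{r_j/2}$ gives the stated main term $\eta_{\sum r_j}q^\beta\,\Phi_\beta(\vec r)\prod \pi(r_j)/q^{r_j/2}$, with $\eta_{\sum r_j}=1$ since $\sum r_j$ is even.

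\medskip

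The third step is to control the error terms, and this is where I expect the main work to lie. Each application of Lemma~\ref{lemm: first estimation s} at length $\ell$ contributes an error $O\big(\phi_{\ell/2}(\vec r)\ell^n q^{\max(\sum r_j+\ell/2-r_1,\ \sum r_j/2+\ell)}\big)$. Since $\ell\leq L\leq (\sum r_j-1-\beta)/2$ and $\beta\geq 1$, one has $\ell\leq (\sum r_j)/2$, so both exponents inside the max are at most $\sum r_j - r_1 + \text{(something)} \le$ roughly $\sum r_j$; more carefully, $\sum r_j/2+\ell \le \sum r_j$ and $\sum r_j + \ell/2 - r_1 \le \sum r_j + \sum r_j/4 - r_1$, which is bounded by $\sum r_j$ provided $r_1$ is not too small — and in any case the geometric sum over $\ell$ (with weights $q^\ell$ from \eqref{duality even}) together with the overall prefactor $q^{\beta-\sum r_j/2}$ must be tracked to see that everything collapses to $O\big((\sum r_j)^n q^{\sum r_j}\big)$. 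One also needs $\phi_{\ell/2}(\vec r)=O(1)$ uniformly (immediate from \eqref{def of phi_delta}, since it is a subsum of an absolutely convergent geometric-type product bounded by $\prod(1-q^{-r_j})^{-1}\le 2$), and $\ell^n\le(\sum r_j)^n$. The delicate point is that the prefactor $q^{\beta-\sum r_j/2}$ can be as large as roughly $q^{\sum r_j/2}$ (when $\beta\approx \sum r_j-2$), so one cannot afford any slack: the exponents have to be added exactly, using $2L=\sum r_j-1-\beta$, to confirm that $q^{\beta-\sum r_j/2}\cdot q^L = q^{\beta+L-\sum r_j/2} = q^{(\sum r_j-1)/2+\beta/2-1/2}\cdots$ — in fact $\beta+L - \sum r_j/2 = \beta + (\sum r_j - 1-\beta)/2 - \sum r_j/2 = (\beta-1)/2 \le \sum r_j$, and similarly for the other branch — so after multiplying through, the worst error is $O((\sum r_j)^n q^{\sum r_j})$ as claimed. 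The main obstacle is thus purely the careful exponent arithmetic across the two branches of the max in Lemma~\ref{lemm: first estimation s} combined with the geometric summation in \eqref{duality even}, ensuring uniformity in $q$.
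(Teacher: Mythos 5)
Your outline correctly identifies the mechanism used in the paper for the case $\sum r_j$ even: apply the duality relation \eqref{duality even}, substitute the expansion of Lemma~\ref{lemm: first estimation s} for each $S(\ell;\vec r)$ on the right, observe that only even $\ell=2l'$ contribute to the main terms (since $\eta_\ell$ vanishes for odd $\ell$), and recognize the resulting linear combination of $q^{l'}\phi_{l'}(\vec r)$ as precisely the definition \eqref{def of Phi_beta} of $\Phi_\beta(\vec r)$, with the prefactor $q^{\beta-\sum r_j/2}\prod\pi(r_j)=q^\beta\prod\pi(r_j)/q^{r_j/2}$. That part of the bookkeeping matches the paper exactly.

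However, there is a real gap: you treat only $\sum r_j$ even and never address the case $\sum r_j$ odd, which is also covered by the proposition. Indeed the statement carries the factor $\eta_{\sum r_j}$ precisely so that when $\sum r_j$ is odd the main term vanishes and the content of the proposition reduces to the pure bound $S(\beta;\vec r)=O((\sum r_j)^{n}q^{\sum r_j})$. The paper proves this case separately, using the odd-degree duality \eqref{duality odd}
$$S(\beta;\vec r)=q^{\beta-\frac{\sum r_j-1}{2}}S\big(\textstyle\sum r_j-1-\beta;\vec r\big)$$
together with the bound coming from the Weil estimate in Lemma~\ref{lemm: first estimation s} applied at the dual degree $\sum r_j-1-\beta$. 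Without this branch your argument does not establish the proposition as stated; note also that $\Phi_\beta(\vec r)$ is only defined when $\sum r_j$ is even, so the odd case cannot be folded silently into your even-case computation.

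Two further cautions. Your claim that ``$\beta\leq\sum r_j-2$ forces $\sum r_j-1-\beta<\beta$'' is false as stated (it would require $\beta>(\sum r_j-1)/2$, not assumed); fortunately the argument does not actually depend on the dual degree being smaller than $\beta$, only on being able to control the $S(\ell)$ for $\ell\leq\sum r_j-1-\beta$. Finally, you defer the delicate exponent arithmetic controlling the $\max\big(\sum r_j+\ell/2-r_1,\ \sum r_j/2+\ell\big)$ branches with ``must be tracked''; this is exactly where the proof has content, and you should carry it out rather than assert that it ``collapses.''
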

\begin{proof}
 Assume $\sum r_{j}$ is odd. Since $\beta\leq\sum
r_{j} -2$ we may use \eqref{duality odd} for $\sum r_{j}$ odd,
$$S(\beta;\vec r)=q^{\beta-\frac{\sum r_{j}-1}{2}}S(\textstyle\sum r_{j}-1-\beta;\vec r)$$
and inserting the inequality of Weil's theorem with $\beta$ replaced
by $\sum r_{j}-1-\beta$ we get
$$S(\textstyle\sum r_{j}-1-\beta;\vec r)\ll(\sum r_{j})^{n}q^{\frac{\sum r_{j}}{2}+(\sum r_{j}-1-\beta)},$$
hence
$$S(\beta;\vec r)\ll q^{\beta-\frac{\sum r_{j}-1}{2}}(\textstyle\sum r_{j})^{n}q^{\frac{\sum r_{j}}{2}+(\sum r_{j}-1-\beta)}\ll
(\textstyle\sum r_{j})^{n}q^{\sum r_{j}}$$ as claimed.
\\Now assume $\sum r_{j}$ is even. Using \eqref{duality even} and Lemma \ref{lemm: first estimation s} we get
\begin{align*}
S(\beta;\vec r)&=q^{\beta-\frac{\sum
r_{j}}{2}}\left(-S(\textstyle\sum r_{j}-1-\beta;\vec r)
+(q-1)\sum_{l=0}^{\sum r_{j}-\beta-2}S(l;\vec r) \right)=\\
&=q^{\beta-\frac{\sum r_{j}}{2}}\pi(r_{1})\cdots\pi(r_{n})\left(
                                                                                       -\eta_{\sum r_{j}-1-\beta}q^{\frac{\sum r_{j}-1-\beta}{2}}
                                                                                       \phi_{\frac{\sum r_{j}-1-\beta}{2}}(\vec r)\right.+\\&+\left.
                                                                                       (q-1)\sum_{l=0}^{\sum r_{j}-\beta-2}\eta_{l}q^{\frac{l}{2}}
                                                                                       \phi_{\frac{l}{2}}(\vec r)
                                                                                     \right)+
                                                                                   \\&+
                                                                                   O \left(
                                                                                      \begin{array}{c}
                                                                                        \phi_{\beta/2}(\vec r)q^{\beta-\frac{\sum r_{j}}{2}+1}\sum_{l=0}^{ \sum r_{j}-\beta-2} l^{n}q^{\max(\frac{\sum r_{j}}{2}+l,\sum r_j+\frac{l}{2}-r_1)}\\
                                                                                      \end{array}
                                                                                    \right).
\end{align*}
The remainder term is $O((\sum r_{j})^{n}q^{\sum r_{j}}).$ For the
main term, we note that $\sum r_{j}-1-\beta$ is even since $\beta$
is odd and $\sum r_{j}$ is even. Denote $2L:=\sum r_{j}-1-\beta$,
then we can write the main term as
\begin{equation*}
q^{\beta}\left(
                                                                                     \begin{array}{c}
                                                                                       -q^{L}\phi_{L}(\vec r)+
                                                                                       (q-1)\sum_{l=0}^{L-1}q^{l}\phi_{l}(\vec r) \\
                                                                                     \end{array}
                                                                                   \right)
                                                                                   \prod \frac{\pi(r_{j})}{q^{r_j/2}}
= q^\beta \Phi_\beta(\vec r) \prod \frac{\pi(r_j)}{q^{r_j/2}}
\end{equation*}
by definition \eqref{def of Phi_beta} of $\Phi_\beta(\vec r)$.
\end{proof}

\section{The $n$-level density}

In the present section we begin the calculation of the $n$-level
density for the hyperelliptic ensemble. First we recall the
definition of $n$-level density. Let $n$ be a natural number and
suppose we are given $n$ real-valued even test function
$f_1,...,f_n\in\sr$ (by $\sr$ we denote the Schwartz space). Let
$$ \^f_k(s)=\int_\R f_k(t)e^{-2\pi ist}dt$$
be the Fourier transforms of $f_k$. We will assume that each $\fh_j$
is supported on the interval $(-s_j,s_j)$ and $\sum s_j<2$. Let
$h\in\hgq$ be a polynomial defining a curve $y^2=h(x)$ with
normalized L-zeros $e^{i\th_j}, j=\pm 1,...,\pm g, \th_{-j}=-\th_j$.
Let
$$\ft_k(t)=\sum_{m\in\Z}f_k\lb 2g\lb\frac{t}{2\pi}+m\rb\rb$$
be the associated periodic test functions (with period $2\pi$). We
denote
$$
W^{(n)}_f(h)=\sum_{\substack{\th_{j_1},...,\th_{j_n}\\1\le |j_k|\le
g\\j_k\neq\pm j_l\mbox{ if }k\neq
l}}\ft_1(\th_{j_1})...\ft_n(\th_{j_n}).
$$
For the rest of the section whenever we use the averaging notation
we mean averaging over $h\in\hgq$ and whenever we use the asymptotic
big-$O$ notation the implicit constant may depend on $n,f_1,...,f_n$
(and other test functions we introduce) but not on $g,q$. The aim of
this section is to prove that \beq\label{wa}
\av{W^{(n)}_f(h)}=A(f_1,...,f_n)+O(\log g/g), \eeq where
$A(f_1,...,f_n)$ is an explicit expression in the $f_i$ and their
Fourier transforms independent of $g,q$.

\subsection{Passage to unrestricted sums}

To express $W^{(n)}_f$
in terms of unrestricted sums over zeros we use a standard
combinatorial sieving method (see \cite{rudsar, Rubinstein, Gao} for
usage of this method in a similar context). First of all since the
$f_i$ are even we may write
$$
W^{(n)}_f=2^n\sum_{\substack{1\le j_1,...,j_n\le g\\
\mbox{dist.}}}\ft_1(\th_{j_1})...\ft_n(\th_{j_n})
$$
(here the summation is over distinct $j_1,...,j_n$).

Denote by $\pin$ the set of partitions of the set ${1,...,n}$. For
two partitions $\uf,\ug\in\pin$ we say that $\uf$ refines $\ug$ and
write $\uf\prec\ug$ if each set appearing in $\ug$ is a union of
sets appearing in $\uf$. We denote $\uo=\{\{1\},...,\{n\}\}\in\pin$.
For any finite set $F$ we denote by $|F|$ its cardinality. For a
partition $\uf=\{F_1,...,F_\nu\}$ we denote $|\uf|=\nu$. Now suppose
we have a function $R:\pin\to\R$ and denote
$C(\uf)=\sum_{\uf\prec\ug}R(\ug)$. The combinatorial M\"{o}bius
inversion formula states that
$R(\uf)=\sum_{\uf\prec\ug}\mu(\uf,\ug)C(\ug)$, where $\mu(\uf,\ug)$
is the M\"{o}bius function for the partially ordered set $\pin$. It
is known that if $\uf=\{F_1,...,F_\nu\}$ then
$$\mu(\uo,\uf)=\prod_{l=1}^\nu (-1)^{|F_l|-1}(|F_l|-1)!$$ (see
\cite[\S 12]{lint}), so we have
$$R(\uo)=\sum_{\uf\in\pin}\prod_{l=1}^{|\uf|} (-1)^{|F_l|-1}(|F_l|-1)!C(\uf)$$ (here $\uf={F_1,...,F_\nu}, \nu=|\uf|$).

Now for $\uf=\{F_1,...,F_\nu\}\in\pin$ take $$C(\uf)=\sum_{1\le
j_1,...,j_\nu\le g}\prod_{l=1}^\nu\prod_{k\in
F_l}\ft_k(\th_{j_l}),$$
$$R(\uf)=\sum_{\substack{1\le j_1,...,j_\nu\le g\\{\mbox{dist.}}}}\prod_{l=1}^\nu\prod_{k\in F_l}\ft_k(\th_{j_l}).$$
It is easy to see that $C(\uf)=\sum_{\uf\prec\ug}R(\ug)$ and so
denoting $$\ut_F(\th)=\prod_{k\in F}\ft_k(\th),$$ we have
\begin{equation*}\label{1}
\begin{split}
W^{(n)}_f&=2^nR(\uo)=2^n\sum_{\uf\in\pin}\prod_{l=1}^{|\uf|} (-1)^{|F_l|-1}(|F_l|-1)!\sum_{1\le j_1,...,j_{|\uf|}\le g} \ut_{F_l}(\th_{j_l})\\
&=2^n\sum_{\uf}\prod_{l=1}^{|\uf|}
(-1)^{|F_l|-1}(|F_l|-1)!\sum_{1\le j\le g}\ut_{F_l}(\th_j).
\end{split}
\end{equation*}
Since the $f_j$ and hence also the $\ut_F$ are even, we may also
rewrite this with a sum over all zeros: \beq\label{e1}
W^{(n)}_f=\sum_{\uf}(-2)^{n-|\uf|}\prod_{l=1}^{|\uf|}(|F_l|-1)!\prod_{l=1}^{|\uf|}\sum_{j=\pm
1,...,\pm g}\ut_{F_l}(\th_j). \eeq

\subsection{Passage to a sum over primes}

Next we will replace the sum over zeros in \rf{e1} with a sum over
primes. For any $f\in\sr$ with compactly supported Fourier transform
we denote \beq\label{defs} \AES(f; h):=\frac{1}{g}\sum_{r=1}^\ity
rq^{-r/2}\hat{f}\lb\frac{r}{2g}\rb\sum_{\substack{\deg P=r
\\  {\mathrm{ prime}} }}\leg{h}{P}. \eeq

\begin{prop}\label{prop1}
Let $f\in\sr$ be a real-valued even function with compactly
supported Fourier transform $\hat{f}$ and let
$\ft(t)=\sum_{m\in\Z}f\lb 2g\lb\frac{t}{2\pi}+m\rb\rb$ be its
associated periodic function. Then for any $h\in\hgq$ with
normalized L-zeros $e^{i\th_j}, j=\pm1,...,\pm g$ we have
$$
\sum_{1\le |j|\le
g}\ft(\th_j)=\fh(0)-\frac{1}{2}f(0)-\AES(f;h)+O(\log g/g)
$$
(the implicit constant may depend on $f$).
\end{prop}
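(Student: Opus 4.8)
The plan is to convert the periodized sum on the left into an unrestricted sum over all $2g$ L-zeros via Poisson summation in the variable indexing the lattice shifts, and then to feed in the explicit formula \eqref{explicit formula} (equivalently \eqref{weil}) to replace the resulting Fourier side with a sum over prime polynomials. First I would write
\[
\sum_{1\le|j|\le g}\ft(\th_j)=\sum_{j=\pm1,\dots,\pm g}\;\sum_{m\in\Z}f\!\left(2g\Big(\frac{\th_j}{2\pi}+m\Big)\right),
\]
and apply the Poisson summation formula to the inner sum over $m\in\Z$. Since $f$ has compactly supported Fourier transform $\fh$, Poisson turns $\sum_m f(2g(t/2\pi+m))$ into $\frac{1}{2g}\sum_{k\in\Z}\fh\!\left(\tfrac{k}{2g}\right)e^{ik t}$, a finite sum because $\fh$ is supported in a fixed interval and $g$ is large. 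Summing over the eigenphases $\th_j$ of $\Theta_h$ then produces $\frac1{2g}\sum_k\fh(k/2g)\tr\Theta_h^{k}$ (using $\sum_j e^{ik\th_j}=\tr\Theta_h^k$ over all $2g$ eigenvalues, with the $k$ and $-k$ terms combining since $\fh$ is even and the spectrum is symmetric), plus the $k=0$ term which contributes $\frac{2g}{2g}\fh(0)=\fh(0)$.

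Next I would isolate the $k=0$ contribution as the term $\fh(0)$, and for $k\ne 0$ insert the explicit formula \eqref{explicit formula},
\[
\tr\Theta_h^{k}=-\frac{\lambda}{q^{k/2}}-\frac{1}{q^{k/2}}\sum_{\deg f=k}\Lambda(f)\chi_h(f),
\]
noting that here $\lambda\in\{0,1\}$ records the trivial-zero correction and $\chi_h(f)=\leg{h}{f}$ after quadratic reciprocity (the reciprocity sign is harmless here because $h$ is monic of odd degree $2g+1$, or can be absorbed). The $\Lambda$-part is exactly $-g\,\AES(f;h)$ by the definition \eqref{defs}, once one checks that extending the $r$-sum from $r\le 2g$ (where $\fh(r/2g)$ can be nonzero given $\supp\fh\subset(-s,s)$ with $s<2$, but actually we need $r$ up to the support bound $2g s$) to all $r\ge 1$ changes nothing because $\fh(r/2g)=0$ for $r>2gs$. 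The $\lambda$-part contributes $\frac{1}{2g}\sum_{k\ge1}\fh(k/2g)q^{-k/2}\cdot(-\lambda)\cdot 2$, which is a convergent geometric-type sum of size $O(1/g)$; together with the need to symmetrize $k\leftrightarrow -k$ this is where the $-\tfrac12 f(0)$ falls out, since $\frac1{2g}\sum_{k\in\Z}\fh(k/2g)\to\int\fh=f(0)$ but we want only the $k\ne0$ half of a slightly different combination — more precisely, the diagonal/endpoint bookkeeping of Poisson summation applied to a function evaluated on a half-integer-like grid produces the $-\tfrac12 f(0)$ boundary term, analogous to the $\frac12$ in the classical explicit formula.

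The main obstacle is the careful bookkeeping of the two ``boundary'' terms $\fh(0)-\tfrac12 f(0)$ and the error control. Concretely: (i) one must track exactly how the finitely many $k$ with $\fh(k/2g)\ne0$ interact with the range $1\le r\le 2g$ of genuine eigenvalue data versus the tail, and show the discrepancy is $O(\log g/g)$ — here the worst case is $k$ near $2gs$, and one uses the Riemann Hypothesis bound \eqref{weil}, $\big|\sum_{\deg P=r}\leg hP\big|\ll \frac{\deg h}{r}q^{r/2}=\frac{2g+1}{r}q^{r/2}$, so that the contribution of $r$ in a dyadic range near $2g$ is $\ll \frac1g\cdot r q^{-r/2}\cdot\frac{g}{r}q^{r/2}=O(1)$ per term but summing the $\Lambda$ versus prime discrepancy (prime powers $f=P^e$, $e\ge2$) over all $r$ costs a factor $\log g$, giving $O(\log g/g)$; (ii) one must confirm the $\lambda/q^{k/2}$ terms and the Poisson endpoint term combine to precisely $\fh(0)-\tfrac12 f(0)$ and not some other constant, which is the delicate sign/normalization check. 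I would handle (ii) by comparing with the genus-independent identity $\frac{1}{2g}\sum_{k\in\Z}\fh(k/2g)=f(0)+o(1)$ and peeling the $k=0$ term, and handle (i) by splitting the prime sum into primes and proper prime powers, bounding the latter trivially by $\sum_{e\ge2}\sum_{\deg P=r/e} \deg P\,q^{r/2}/g$ and the former by Weil \eqref{weil}.
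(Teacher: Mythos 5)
Your overall strategy — Fourier-expand the periodized test function (which is what Poisson summation gives here), insert the explicit formula, and isolate the prime contribution — matches the paper's approach, and the identification of the $k=0$ term as $\fh(0)$ is correct. But the account of where $-\tfrac12 f(0)$ comes from is wrong, and this is a genuine gap rather than a presentation issue. First, the trivial-zero correction $\lambda$ is identically zero in this ensemble: for $h\in\hgq$ one has $\deg h=2g+1$ odd, so $\lambda=0$ and the paper's version \eqref{exp} of the explicit formula simply has no $\lambda$-term. The ``$\lambda$-part'' therefore contributes nothing at all, not a geometric series of size $O(1/g)$. Second, the Poisson/Fourier identity $\sum_{m}f(2g(t/2\pi+m))=\tfrac{1}{2g}\sum_{k\in\Z}\fh(k/2g)e^{ikt}$ is exact and has no endpoint or diagonal boundary term, so that cannot be the source of $-\tfrac12 f(0)$ either. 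There is likewise no ``range $1\le r\le 2g$ of genuine eigenvalue data'': the explicit formula and the definition \eqref{defs} of $\AES$ both run over all $r\ge 1$, truncated automatically by $\supp\fh$, so the bookkeeping issue you flag does not arise.

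The term $-\tfrac12 f(0)$ actually comes from what you relegated to the error, namely the prime squares $Q=P^2$ inside $\sum_{\deg Q=r}\Lambda(Q)\leg{h}{Q}$. Your assertion that ``the $\Lambda$-part is exactly $-g\,\AES(f;h)$'' is false: $\AES$ sums only over primes, and the prime-power remainder is a \emph{main} term, not an error. Concretely, for $r=2s$ one has $\leg{h}{P^2}=1$ unless $P\mid h$, so $\sum_{\deg P=s}\Lambda(P^2)\leg{h}{P^2}=s\pi(s)+O(\min(g,q^s))=q^s+O(q^{s/2}+\min(g,q^s))$; inserting this into $-\tfrac{1}{g}\sum_r q^{-r/2}\fh(r/2g)\,(\cdot)$ gives a main term $-\tfrac1g\sum_{s\ge1}\fh(s/g)=-\int_0^\infty\fh+O(1/g)=-\tfrac12 f(0)+O(1/g)$, with the $O(\min(g,q^s))$ error summing to $O(\log g/g)$. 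Higher prime powers $P^e$, $e\ge 3$, then contribute only $O(1/g)$. Without this identification your argument produces $\fh(0)-\AES(f;h)$ plus an unquantified error, not the stated formula.
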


\begin{proof} The Fourier coefficients of $\ft$ are $\widehat{\ft}(r)=\frac{1}{2g}\fh\lb\frac{r}{2g}\rb$, so we have
\beq\label{series}
\ft(t)=\sum_{r\in\Z}\frac{1}{2g}\fh\lb\frac{r}{2g}\rb e^{irt}
=\frac{1}{2g}\fh(0)+\frac{1}{g}\sum_{r=1}^\ity \fh\lb\frac{r}{2g}\rb
e^{irt}. \eeq The explicit formula states that \beq\label{exp}
\sum_{1\le|j|\le g} e^{ir\th_j}=-q^{-r/2}\sum_{\substack{\deg Q=r
\\ {\mathrm{monic}}}}\leg{h}{Q}\Lam(Q), \eeq where $\Lam$ is the
von Mangoldt function. Combining \rf{series} and \rf{exp} we obtain
\beq\label{sum1}\sum_{1\le |j|\le
g}\ft(\th_j)=\fh(0)-\frac{1}{g}\sum_{r=1}^\ity
q^{-r/2}\fh\lb\frac{r}{2g}\rb \sum_{\substack{\deg Q=r\\
{\mathrm{monic}}}}\leg{h}{Q}\Lam(Q). \eeq The contribution to this
sum from prime $Q$ is exactly the term appearing in the statement of
the proposition. Now we consider the contribution of the squares
$Q=P^2$ with $P$ prime, $\deg P=r/2$ (for $r$ even). We use the fact
that $\leg{h}{P^2}$ is 1 unless $P|h$, in which case it is 0. We
denote by $\pi(r)$ the number of monic irreducible polynomials in
$\fq[x]$ of degree $r$. Since $\pi(r)=q^r/r+O(q^{r/2}/r)$, we have
\begin{equation*}
\begin{split}
\sum_{\substack{\deg P=r/2\\
{\mathrm{prime}}}}\leg{h}{P^2}\Lam(P^2)&=\pi(r/2)\frac{r}{2}-\frac{r}{2}\cdot\#\{P
\mbox{ prime}, P|h\}\\
&= q^{r/2}+O(q^{r/4}+\min(g,q^{r/2}))\;,
\end{split}
\end{equation*}
since the number of prime $P|h$ of degree $r/2$ is
$O(\min(g/r+q^{r/2}/r))$. We see that the contribution of these
squares to the sum in \rf{sum1} is
\begin{multline*}\frac{1}{g}\sum_{r=1}^\ity\hat{f}(r/2g)\lb 1+O\lb
q^{-r/4}+\min(gq^{-r/2},1))\rb\rb =\\=2\int_0^\ity\hat{f}(t)\d
t+O(\log g/g)=f(0)+O(\log g/g),\end{multline*} because $\sum_{r>\log
g}gq^{-r/2}=O(1)$. The contribution of higher prime powers $Q=P^k,
k>3$ is $O(1/g)$ because the number of prime $P$ with $\deg P\le
r/3$ is $O(q^{r/3}/r)$.
\end{proof}

\begin{cor}\label{cor1}
\begin{multline*}
W^{(n)}_f=\sum_{\uf} (-2)^{n-|\uf|}\prod_{l=1}^{|\uf|}(|F_l|-1)! \\
\cdot\lb \uh_{F_l}(0)-\frac{1}{2}U_{F_l}(0) -\AES(\uh_{F_l}; h) +
O(\frac{\log g}{g})\rb,
\end{multline*}
where $U_F(t)=\prod_{k\in F} f_k(t)$, $\uh_F$ is its Fourier
transform.\end{cor}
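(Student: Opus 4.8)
I expect Corollary~\ref{cor1} to be a soft consequence of feeding Proposition~\ref{prop1} into the partition identity \eqref{e1}, one block at a time. Recall that \eqref{e1} writes $W^{(n)}_f$ as a \emph{finite} sum over partitions $\uf=\{F_1,\dots,F_{|\uf|}\}\in\pin$ (the number of such partitions depends only on $n$) of the product $(-2)^{n-|\uf|}\prod_{l=1}^{|\uf|}(|F_l|-1)!\cdot\prod_{l=1}^{|\uf|}\sum_{j=\pm1,\dots,\pm g}\ut_{F_l}(\th_j)$, with constant coefficients. So the whole task reduces to evaluating, for a fixed block $F\subseteq\mathbf n$, the single sum $\sum_{j=\pm1,\dots,\pm g}\ut_F(\th_j)$, and then reassembling.

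For fixed $F$ the plan is to apply Proposition~\ref{prop1} to the test function $U_F=\prod_{k\in F}f_k$. This $U_F$ is real-valued, even and Schwartz, being a product of functions with those properties, and its Fourier transform $\uh_F$, the convolution of the $\fh_k$ with $k\in F$, is supported in $\sum_{k\in F}(-s_k,s_k)$ and hence compactly supported; thus Proposition~\ref{prop1} applies to it. The one point needing care is that the periodic function $\ut_F=\prod_{k\in F}\ft_k$ occurring in \eqref{e1} is not literally the periodization of $U_F$, since periodization and pointwise multiplication do not commute; but the discrepancy is negligible. Indeed, on the fundamental domain $\th\in[-\pi,\pi]$ all terms with $m\neq0$ in $\ft_k(\th)=\sum_{m\in\Z}f_k\big(2g(\tfrac{\th}{2\pi}+m)\big)$ have argument of absolute value at least $g$, so by the rapid decay of $f_k$ we get $\ft_k(\th)=f_k(g\th/\pi)+O_N(g^{-N})$ for every $N$; multiplying over $k\in F$ and using that the $f_k$ are bounded gives $\ut_F(\th)=\prod_{k\in F}f_k(g\th/\pi)+O_N(g^{-N})$, and the identical computation applied to $U_F$ shows its periodization equals $\prod_{k\in F}f_k(g\th/\pi)+O_N(g^{-N})$ as well. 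Hence $\ut_F$ and the periodization of $U_F$ agree uniformly up to $O_N(g^{-N})$, so (the eigenphases $\th_j$ lying in $[-\pi,\pi]$) the resulting discrepancy in $\sum_j\ut_F(\th_j)$ is $O_N(g^{1-N})$, negligible once $N\geq3$. Proposition~\ref{prop1} then yields
\begin{equation*}
\sum_{j=\pm1,\dots,\pm g}\ut_F(\th_j)=\uh_F(0)-\tfrac12 U_F(0)-\AES(\uh_F;h)+O(\log g/g).
\end{equation*}

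It then only remains to substitute this expression for each of the $|\uf|$ factors $\sum_j\ut_{F_l}(\th_j)$ back into \eqref{e1}; since $\pin$ is finite and the coefficients $(-2)^{n-|\uf|}\prod_l(|F_l|-1)!$ are constants depending only on $n$, this produces exactly the asserted formula, with the error $O(\log g/g)$ kept inside each factor rather than expanded out. The argument is soft throughout; the only mildly delicate point is the comparison of $\ut_F$ with the honest periodization of $U_F$, and that is controlled entirely by the Schwartz decay of the $f_k$.
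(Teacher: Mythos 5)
Your proof takes essentially the same route as the paper: apply Proposition~\ref{prop1} to each block function $U_F$ and substitute into \eqref{e1}. The one place you add genuine value is your careful treatment of the discrepancy between $\ut_F=\prod_{k\in F}\ft_k$ and the honest periodization of $U_F=\prod_{k\in F}f_k$; the paper's proof simply asserts ``$\ut_F(t)$ is the associated periodic function of $U_F$,'' which is not literally true (a product of periodizations picks up cross terms $\prod_k f_k\bigl(2g(\tfrac{\th}{2\pi}+m_k)\bigr)$ with the $m_k$ not all equal), and your rapid-decay estimate showing the two agree up to $O_N(g^{-N})$ uniformly on $[-\pi,\pi]$, hence the zero sums agree up to $O_N(g^{1-N})$, is exactly the correct way to close that gap.
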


\begin{proof}
This follows from \rf{e1} and Proposition \ref{prop1}. Note that
$\ut_F(t)$ is the associated periodic function of $U_F$.
\end{proof}

Now let $u_1,...,u_k\in\sr$ with $k\le n$ be real-valued even
functions with Fourier transforms $\hat{u}_l$. We denote
$$M(u_1,...,u_k)=\av{\prod_{l=1}^k \AES(u_l;h)},$$ ($\AES(u_l;h)$ is defined by \rf{defs}).
In the next subsection we will prove that if $\hat{u}_l$ is
supported in $(-\del_l,\del_l)$ and $\sum_{l=1}^k\del_l<2$ then
\beq\label{mb} M(u_1,...,u_k)=B(u_1,...,u_k)+O(\log g/g), \eeq where
$B(u_1,...,u_k)$ is an explicit expression in the $u_l$ and their
Fourier transforms which is independent of $g,q$.

\begin{prop}\label{arg}
Suppose that \rf{mb} holds under the appropriate conditions on the
supports of $\hat{u}_l$. Then
$$
\av{W^{(n)}_f }=A(f_1,...,f_n)+O(\log g/g)
$$
holds with
\begin{multline*}
A(f_1,...,f_n)=\sum_{\uf} (-2)^{n-|\uf|}\prod_{l=1}^{|\uf|}(|F_l|-1)!\sum_{S\ss\{1,...,l\}}\lb\prod_{l\in S^c}\uh_{F_l}(0)\rb\cdot\\
\cdot \sum_{S_2\ss S}(-1/2)^{|S_2^c|}\lb\prod_{l\in
S_2^c}U_{F_l}(0)\rb (-1)^{|S_2|} B(U_{l_1},...,U_{l_{|S_2|}}),
\end{multline*}
where the first summation is over all partitions
$\uf=\{F_1,...,F_{|\uf|}\}\in\pin$, the second is over all subsets
$S\in\{1,...,l\}$, $S^c$ denotes the complement of $S$ in
$\{1,...,l\}$, the third summation is over all subsets
$S_2=\{l_1,...,l_{|S_2|}\}\ss S$, and $S_2^c=S\setminus S_2$.
\end{prop}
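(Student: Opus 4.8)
The plan is to average Corollary~\ref{cor1} over $h\in\hgq$, expand, and match the outcome term by term with the asserted formula for $A$. Write $E_l(h)$ for the error term of Proposition~\ref{prop1} applied to the even Schwartz function $U_{F_l}$, so $|E_l(h)|\le C_f\log g/g$ uniformly in $h$ and $q$; then Corollary~\ref{cor1} reads
$$W^{(n)}_f=\sum_{\uf}(-2)^{n-|\uf|}\prod_{l=1}^{|\uf|}(|F_l|-1)!\ \prod_{l=1}^{|\uf|}\Bigl(\uh_{F_l}(0)-\tfrac12 U_{F_l}(0)-\AES(U_{F_l};h)+E_l(h)\Bigr).$$
I would apply the linear operator $\av{\cdot}$ and expand each of the finitely many products $\prod_{l=1}^{|\uf|}(\cdots)$ into $4^{|\uf|}$ pieces, recording for each piece which of the four summands was taken by each factor via disjoint index sets: $S^c\subseteq\{1,\dots,|\uf|\}$ for the factors contributing $\uh_{F_l}(0)$, $S_3$ for those contributing $E_l(h)$, and, setting $S:=\{1,\dots,|\uf|\}\setminus(S^c\cup S_3)$, a further splitting $S=S_2^c\sqcup S_2$ with $S_2^c$ the factors contributing $-\tfrac12 U_{F_l}(0)$ and $S_2$ those contributing $-\AES(U_{F_l};h)$.

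Consider first the pieces with $S_3=\emptyset$; for these $S$ is the complement of $S^c$ in $\{1,\dots,|\uf|\}$, matching the indexing in the definition of $A$. The average of such a piece factors, the constants pulling out, leaving $(-1)^{|S_2|}\av{\prod_{l\in S_2}\AES(U_{F_l};h)}=(-1)^{|S_2|}M\bigl((U_{F_l})_{l\in S_2}\bigr)$ by the definition of $M$ (interpreted as $1$ when $S_2=\emptyset$). Since $\widehat{U_{F_l}}$ is the convolution of the $\widehat{f_k}$, $k\in F_l$, it is supported in $(-\del_l,\del_l)$ with $\del_l=\sum_{k\in F_l}s_k$, and because the $F_l$ are pairwise disjoint we get $\sum_{l\in S_2}\del_l\le\sum_{k=1}^n s_k<2$; hence \eqref{mb} applies and gives $M\bigl((U_{F_l})_{l\in S_2}\bigr)=B\bigl((U_{F_l})_{l\in S_2}\bigr)+O(\log g/g)$. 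Summing the $B$-parts over all partitions $\uf$ and all triples $(S^c,S_2^c,S_2)$ reproduces verbatim the nested sum in the statement defining $A(f_1,\dots,f_n)$, while the $O(\log g/g)$-parts, being boundedly many and each multiplied by the bounded quantities $\prod_{l\in S^c}\uh_{F_l}(0)$ and $(-1/2)^{|S_2^c|}\prod_{l\in S_2^c}U_{F_l}(0)$, contribute $O(\log g/g)$ in total. This step is routine bookkeeping, and I expect no real difficulty beyond keeping the index sets straight.

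It remains to bound the pieces with $S_3\neq\emptyset$. Such a piece is, in absolute value, at most a constant times $|E_{l_0}(h)|\cdot\prod_{l\in S_2}|\AES(U_{F_l};h)|$ for some $l_0\in S_3$, hence at most $C_f'\,(\log g/g)\cdot\prod_{l\in S_2}|\AES(U_{F_l};h)|$, so it suffices to establish the auxiliary estimate $\av{\prod_{l\in S_2}|\AES(U_{F_l};h)|}=O(1)$, uniformly in $q$ and $g$, where again $\sum_{l\in S_2}\del_l<2$. By Cauchy--Schwarz/H\"older this reduces to single-function moment bounds $\av{|\AES(u;h)|^{p}}=O(1)$ with $p\cdot(\text{support radius of }\widehat u)<2$; for even $p$ this is precisely the diagonal case $u_1=\dots=u_p=u$ of \eqref{mb} (in particular the second-moment bound $\av{\AES(u;h)^2}=M(u,u)=O(1)$ when the support radius is $<1$), and the general case --- including the genuinely delicate situation of a factor whose support radius lies in $[1,2)$, necessarily unique since the $\del_l$ are disjoint and sum to $<2$ --- is handled by the same method that proves \eqref{mb}: splitting the frequency support and using the functional equation of $Z_h$ (the substitute for Poisson summation, cf.\ Proposition~\ref{prop: s eval}) to convert the high-frequency prime sums into low-degree data with small averages. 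Making these auxiliary $\AES$-averages bounded \emph{uniformly in $q$} is the one real obstacle; everything else is either the combinatorial identity of the previous paragraph or a direct appeal to \eqref{mb}.
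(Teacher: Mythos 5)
Your expansion of Corollary~\ref{cor1} into $4^{|\uf|}$ pieces and the identification of the $S_3=\emptyset$ pieces with the asserted formula for $A$ is correct and matches the paper's argument verbatim (the paper describes this step as ``expanding the product, averaging and using \rf{mb}'', and observes as you do that $\widehat{U_{F_l}}$ is a convolution supported in $(-\sum_{k\in F_l}s_k,\sum_{k\in F_l}s_k)$ so \rf{mb} applies). The difficulty is entirely in the pieces with $S_3\neq\emptyset$, and here your proposal has a genuine gap. Your plan is to bound $\av{\prod_{l\in S_2}|\AES(U_{F_l};h)|}$ by H\"older, reducing to single-function absolute moments $\av{|\AES(u;h)|^{p}}$; but \rf{mb} only gives you even-integer moments without absolute values, and the H\"older exponents forced on you are in general non-even reals. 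Concretely, if one factor has $\del_{l_0}\in(1,2)$ (the case you yourself flag), then the usable even moment must satisfy $2k\del_{l_0}<2$, i.e.\ $k<1/\del_{l_0}<1$, so $k=0$: \rf{mb} gives you \emph{no} nontrivial moment of $\AES(u_{l_0};h)$, and interpolation from even moments cannot produce the fractional-power bound you need. The closing remark that this ``is handled by the same method that proves \eqref{mb}'' is not an argument --- that method produces signed mixed moments, not absolute moments, precisely because the prime sums in $\AES$ oscillate.

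The paper's proof closes this gap by working at the level of the zero sums rather than of $\AES$, via the positive-majorant trick (attributed to Rubinstein, Lemma~2): for each $u_l$ one chooses an even $v_l\in\sr$ with $v_l(t)>|u_l(t)|$ pointwise and $\hat v_l$ still supported in $(-\del_l,\del_l)$. Since $v_l\ge 0$, the periodization $\tilde v_l$ is nonnegative and $\tilde v_l\ge|\tilde u_l|$, so $\prod_l\sum_j\tilde v_l(\th_j)$ dominates $\bigl|\prod_l\sum_j\tilde u_l(\th_j)\bigr|$ for every $h$; the support condition on the $\hat v_l$ is unchanged, so the whole machinery (Proposition~\ref{prop1}, Corollary~\ref{cor1}, and \rf{mb}) applies to the $v_l$ and yields $\av{\prod_l\sum_j\tilde v_l(\th_j)}=O(1)$. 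An induction on the number of factors then propagates the $O(\log g/g)$ error through the product. This is exactly the absolute-moment bound you were after, but obtained without ever taking absolute values of $\AES$ or invoking H\"older. You should replace the H\"older paragraph with the majorant argument (and note that the same device is reused later in the paper, in Lemma~\ref{upper}).
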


\begin{proof} First we note that if we could ignore the $O(\log g/g)$ terms in Corollary \ref{cor1} then the Proposition would follow at once by expanding
the product, averaging and using \rf{mb}. Here we use the fact that
$\uh_{l_j}$ is supported on the interval $(-\del_j,\del_j)$ where
$\del_j=\sum_{k\in F_{l_j}}s_k$ (recall that $\fh_k$ is supported on
$(-s_k,s_k)$), because the Fourier transform takes products to
convolutions, so we have $\sum_{j=1}^{|S_2|}\del_j\le\sum_{k=1}^n
s_k<2$, which makes \rf{mb} applicable.

To deal with the error terms $O(\log g/g)$ we prove by induction on
$m$ that for any even real-valued $u_1,...,u_m\in\sr$, with each
$\hat{u}_l$ supported on $(-\del_l,\del_l)$ and $\sum\del_l<2$, we
have \begin{multline*}\av{\prod_{l=1}^m
(u_l(0)-\frac{1}{2}\hat{u}_l(0)-\AES(u;h)+O(\log g/g))}=\\=
\av{\prod_{l=1}^m (u_l(0)-\frac{1}{2}\hat{u}_l(0)-\AES(u;h))} +
O(\log g/g)\end{multline*} (see \cite{Rubinstein}, Lemma 2 for a
similar argument). Assuming by induction that this holds for $m-1$
it is enough to show that \beq\label{cond1}\av{O(\log
g/g)\cdot\prod_{l=1}^{m-1}(u_l(0)-\frac{1}{2}\hat{u}_l(0)-\AES(u_l;h))}=O(\log
g/g).\eeq But
$$u_l(0)-\frac{1}{2}\hat{u}_l(0)-\AES(u_l;h)=\sum_{1\le |j|\le g}
\tilde{u}_l(\th_j)+O(\log g/g),$$ (by Proposition \ref{prop1}, here
$e^{i\th_j}$ are the normalized L-zeros corresponding to $h$ and
$\tilde{u}_l$ is the periodic function associated with $u_l$), so by
induction it is enough to show that
$$\av{O(\log g/g)\cdot\prod_{l=1}^{m-1}\sum_{1\le |j|\le g} \tilde{u}_l(\th_j)}=O(\log g/g).$$
For this we may replace each $u_l$ with an even real-valued function
$v_l\in\sr$ s.t. $v_l(t)>|u_l(t)|$ for all $t\in\R$ and each
$\hat{v}_l$ supported on $(-\del_l,\del_l)$. That such functions
always exist is shown in \cite{Rubinstein}, proof of Lemma 2. Now
applying \ref{mb} and using the induction hypothesis we see that
$$\av{\prod_{l=1}^{m-1}\sum_{1\le |j|\le g}
\tilde{v}_l(\th_j)}=O(1),$$ which implies \rf{cond1}.\end{proof}

\subsection{Evaluation of $M(u_1,...,u_m)$: reduction to sums over distinct primes}\label{Sec:M}

In the rest of this section we evaluate
$$M(u_1,...,u_m)=\av{\prod_{l=1}^m \AES(u_l;h)}$$ up to $O(\log
g/g)$ for even real-valued $u_k\in\sr$ s.t. $\hat{u}_k$ is supported
on $(-\del_k,\del_k)$ with $\sum_{k=1}^m\del_k<2$. We want to derive
a result of the form \rf{mb}, so we assume by induction that it
already holds for all $m'<m$. We denote $\mm=\{1,...,m\}$.

Let $F$ be a subset of $\mm$. Denote
$$C(F)=C(F;h)=\frac{1}{g^{|F|}}\sum_{r=1}^\ity q^{-|F|r/2}r^{|F|}\prod_{k\in F}\hat{u}_k\lb\frac{r}{2g}\rb
\sum_{\substack{\deg P=r \\
{\mathrm{prime}}}}\leg{h}{P^{|F|}}.$$ For a partition
$\uf=\{F_1,...,F_\nu\}$ of a set $S\ss\mm$ we denote
$C(\uf)=\prod_{l=1}^\nu C(F_l)$. For two elements $i,j\in S$ we say
that $i\sim_{\uf} j$ if they lie in the same element of $\uf$. We
have
\beq\label{cuf}C(\uf)=\frac{1}{g^{|S|}}\sum_{r_1,...,r_{|S|}=1}^\ity
 \lb\prod_{k\in S}\hat{u}_k\lb\frac{r_k}{2g}\rb \frac{r_k}{q^{r_k/2}}\rb
\sum_{\substack{ P_1,...,P_{|S|}\\
{\mathrm{prime}} \\ {P_i=P_j\mbox{ if } i\sim_\uf j}}}\leg{h}{P_1
\cdots P_{|S|}}.\eeq Define also
\begin{equation}\label{def of R(S)}
R(\uf)=\frac{1}{g^{|S|}}\sum_{r_1,...,r_{|S|}=1}^\ity \lb\prod_{k\in
S}\hat{u}_k\lb\frac{r_k}{2g}\rb \frac{r_k}{q^{r_k/2}}\rb
\sum_{\substack{ P_1,...,P_{|S|}\\
{\mathrm{prime}}\\ {P_i=P_j\mbox{ iff } i\sim_\uf
j}}}\leg{h}{P_1...P_{|S|}}
\end{equation}
(same expression except that the "if" is replaced with an "iff"). We
have \beq\label{cr}C(\uf)=\sum_{\uf\prec\ug}R(\ug), \mbox{ }
R(\uf)=\sum_{\uf\prec\ug}\mu(\uf,\ug)C(\ug).\eeq

\begin{prop}\label{prop2} Let $F$ be a subset of $\mm$. If $F=\{a,b\}$ consists of two (distinct) elements then
$$C(F)=2\int_\R \uu_a(t)\uu_b(t)|t|\d t+O(\log g/g).$$
If $|F|>2$ then $C(F)=O(1/g)$.\end{prop}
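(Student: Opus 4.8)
The plan is to evaluate $C(F)$ directly from its definition by executing the average over $h\in\hgq$ of the inner character sum $\sum_{\deg P=r}\leg{h}{P^{|F|}}$, and then carry out the resulting sum over $r$. For $|F|$ even, $\leg{h}{P^{|F|}}$ equals $1$ unless $P\mid h$ (in which case it is $0$), so the character sum is just the number of degree-$r$ primes not dividing $h$, namely $\pi(r)-\#\{P\mid h:\deg P=r\}=q^r/r+O(q^{r/2}/r+\min(g,q^{r/2})/r)$. Plugging into the definition of $C(F)$ and writing $|F|=k$,
\begin{equation*}
C(F)=\frac{1}{g^{k}}\sum_{r=1}^{\infty}q^{-kr/2}r^{k}\prod_{i\in F}\hat u_i\!\lb\frac{r}{2g}\rb\cdot\lb\frac{q^{r}}{r}+O\lb\frac{q^{r/2}}{r}+\frac{\min(g,q^{r/2})}{r}\rb\rb.
\end{equation*}
When $k=2$ the leading term is $\frac{1}{g^2}\sum_{r}r\prod_{i\in F}\hat u_i(r/2g)$, which is a Riemann sum (step $1/2g$) converging to $2\int_0^\infty t\,\uu_a(t)\uu_b(t)\,\d t=2\int_\R|t|\uu_a(t)\uu_b(t)\,\d t$ since the $\uu_i$ are even; the error terms contribute $O(\log g/g)$ exactly as in the computation of the squares in Proposition~\ref{prop1} (using $\sum_{r>\log g}gq^{-r/2}=O(1)$). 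When $k>2$ the leading term carries a prefactor $g^{-k}\cdot g$ worth of mass $\cdot\,q^{(1-k/2)r}\to 0$; more precisely the main term is $g^{-k}\sum_r r^{k-1}q^{(1-k/2)r}\prod\hat u_i(r/2g)$, and since $k\ge 3$ gives $1-k/2\le -1/2<0$ the sum over $r$ converges to $O(1)$, leaving $C(F)=O(g^{-k})=O(1/g)$, and the error terms are even smaller.

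For $|F|=k$ odd (so in particular the case $k=1$ is excluded by hypothesis, but $k=3,5,\dots$ must be handled), $\leg{h}{P^{k}}=\leg{h}{P}$, and we must average a genuine quadratic character sum. By \eqref{Averaging quadratic characters} (equivalently Weil's bound \eqref{weil} combined with the averaging formula \eqref{expected value}), $\av{\leg{h}{P}}$ over $\hgq$ is $O(q^{-g})$ uniformly for $\deg P=r\le$ anything relevant, since a single prime $P$ is never a square; summing $\pi(r)=O(q^r/r)$ such terms and inserting the prefactor $g^{-k}q^{-kr/2}r^k$, the tail $r\gtrsim g$ is controlled by rapid decay of $\hat u_i$ while the range $r\lesssim g$ gives a contribution $\ll g^{-k}\sum_{r}r^{k-1}q^{r(1-k/2)-g}\cdot\!\|\hat u\|_\infty$, and for $k\ge 3$ odd this is $O(1/g)$ (indeed much smaller). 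Thus for all $|F|>2$ one gets $C(F)=O(1/g)$.

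The main obstacle is making the Riemann-sum approximation in the case $|F|=2$ fully rigorous with the stated error $O(\log g/g)$: one must compare $\frac1{2g}\sum_{r\ge1} \psi(r/2g)$ with $\int_0^\infty\psi$ for $\psi(t)=t\,\hat u_a(t)\hat u_b(t)$ (after absorbing the $1/g$ and the extra factor $r$ correctly), controlling the discretization error by the smoothness and compact support of the $\hat u_i$, and simultaneously showing that the non-leading pieces $q^{r/2}/r$ and $\min(g,q^{r/2})/r$ in the prime count feed through to $O(\log g/g)$ — this last step is exactly the estimate already carried out for the square terms in the proof of Proposition~\ref{prop1}, so it can be quoted. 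Once these bookkeeping points are settled the proposition follows.
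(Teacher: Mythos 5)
Your treatment of the case $|F|=\{a,b\}$ and of the even case $|F|\geq 4$ matches the paper: reduce $\leg{h}{P^{|F|}}$ to a prime count, use $\pi(r)=q^r/r+O(q^{r/2}/r)$, handle the discretization exactly as in Proposition~\ref{prop1}, and for $|F|\geq 4$ note that the prefactor $q^{(1-|F|/2)r}$ makes the $r$-sum bounded, yielding $C(F)=O(g^{-|F|})$.

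However, your treatment of odd $|F|\geq 3$ contains a genuine gap. The quantity $C(F)=C(F;h)$ is a function of $h\in\hgq$, and Proposition~\ref{prop2} is a \emph{pointwise} assertion, valid for each individual $h$ --- this is how it is used in Proposition~\ref{arg2}, where $C(\ug)=O(1/g)$ is multiplied by another $h$-dependent factor $C(\uo_S)$ \emph{before} the average is taken. Your argument for odd $|F|$ passes to $\av{\leg{h}{P}}$ over $\hgq$ and shows that this average is exponentially small; but this bounds $\av{C(F)}$, not $C(F;h)$, and does not supply the pointwise bound the proposition requires (and which the downstream argument relies on). Moreover, your proposed averaging detour is unnecessary: the trivial bound $\bigl|\leg{h}{P^{|F|}}\bigr|\leq 1$ already gives, for every $h$ and every $|F|=e\geq 3$ regardless of parity,
\begin{equation*}
|C(F;h)|\ \ll\ \frac{1}{g^{e}}\sum_{r=1}^{\infty} q^{-er/2}\,r^{e}\cdot\frac{q^{r}}{r}\ =\ \frac{1}{g^{e}}\sum_{r=1}^{\infty} r^{e-1}q^{(1-e/2)r}\ =\ O(g^{-e}),
\end{equation*}
which is exactly the paper's argument and closes the odd case without appealing to any cancellation in the character sum or to the averaging structure of $\hgq$.
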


\begin{proof} First suppose $F=\{a,b\}$. Then
$$C(F)=\frac{1}{g^2}\sum_{r=1}^\ity\uu_a\lb\frac{r}{2g}\rb\uu_b\lb\frac{r}{2g}\rb r^2q^{-r}\sum_{\substack{\deg P=r\\ {\mathrm{prime}}}}\leg{h}{P^2}.$$
As in the proof of Proposition \ref{prop1} we see that
$$rq^{-r}\sum_{\substack{\deg
P=r\\
{\mathrm{prime}}}}\leg{h}{P^2}=1+O(q^{-r/2}+\min(1,gq^{-r/2}))$$ and
so
\begin{multline*}C(F)=4\sum_{r=1}^\ity \uu_a\lb\frac{r}{2g}\rb\uu_b\lb\frac{r}{2g}\rb\frac{r}{2g}\cdot\frac{1}{2g}+O(\log g/g)=\\
=2\int_\R \uu_a(t)\uu_b(t)|t|\d t + O(\log g/g).\end{multline*}

Now suppose that $|F|=e\ge 3$. Then
$$C(F)\ll\sum_{r=1}^\ity\frac{1}{g^e}q^{(1-e/2)r}r^{e-1}=O(g^{-e}).$$\end{proof}

For any subset $S\ss\mm$ denote $$\uo_S=\{\{k\}|k\in S\},\mbox{ }
\uo=\{\{1\},...,\{m\}\}.$$

\begin{lem}\label{upper} For any proper subset $S\ss\mm$ there is a function $X:\hgq\to\R_{\ge 0}$ s.t. $X(h)\ge |C(\uo_S;h)|$ for all $h$
and $\av{X(h)}=O(1)$.\end{lem}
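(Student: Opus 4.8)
The plan is to dominate each factor of $C(\uo_S;h)=\prod_{k\in S}\AES(u_k;h)$ by a nonnegative sum over the zeros associated with $h$ of a suitable test-function majorant, and then to bound the average of the resulting product by unfolding it, via Proposition~\ref{prop1}, back into statistics $M(\cdot)$ of order strictly less than $m$, to which the inductive hypothesis \eqref{mb} applies.

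Concretely, for each $k\in S$ I would choose an even real-valued $v_k\in\sr$ with $v_k(t)>|u_k(t)|$ for all $t$ and with $\hat v_k$ supported in $(-\del_k,\del_k)$ — such majorants exist by the construction used in the proof of Proposition~\ref{arg} (following \cite{Rubinstein}) — and let $\tilde v_k(\th)=\sum_m v_k(2g(\th/2\pi+m))$ be the associated periodic function. Since $v_k\ge0$, one has $\tilde v_k\ge0$ and $\tilde v_k(\th)\ge|\tilde u_k(\th)|$ pointwise, so $Z_k(h):=\sum_{1\le|j|\le g}\tilde v_k(\th_j)$ is nonnegative and dominates $|\sum_{1\le|j|\le g}\tilde u_k(\th_j)|$. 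Proposition~\ref{prop1} applied to $u_k$ rearranges to $\AES(u_k;h)=\hat u_k(0)-\tfrac12 u_k(0)-\sum_{1\le|j|\le g}\tilde u_k(\th_j)+O(\log g/g)$, whence $|\AES(u_k;h)|\le Z_k(h)+c_k$ for a constant $c_k$ depending only on $u_k$ and uniform in $g,q$. Then $X(h):=\prod_{k\in S}(Z_k(h)+c_k)$ is nonnegative and satisfies $X(h)\ge\prod_{k\in S}|\AES(u_k;h)|=|C(\uo_S;h)|$, which is the required pointwise bound.

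To bound $\av{X(h)}$ I would expand the product, so that $\av{X}$ becomes a finite linear combination (with coefficients formed from the $c_k$) of averages $\av{\prod_{k\in T}Z_k(h)}$ over subsets $T\subseteq S$. Since $S$ is a \emph{proper} subset of $\mm$, every such $T$ has $|T|\le|S|<m$ and $\sum_{k\in T}\del_k<2$. Using Proposition~\ref{prop1} once more, now for each $v_k$, I would write $Z_k(h)=\hat v_k(0)-\tfrac12 v_k(0)-\AES(v_k;h)+O(\log g/g)$, multiply these out, and absorb the $O(\log g/g)$ terms by exactly the induction-on-the-number-of-factors argument already used in the proof of Proposition~\ref{arg}; this leaves $\av{\prod_{k\in T}Z_k(h)}$ equal, up to $O(\log g/g)$, to a linear combination of quantities $M(\{v_k\}_{k\in T'})$ with $T'\subseteq T$, all of order $|T'|\le|T|<m$. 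By the inductive hypothesis \eqref{mb} each such $M(\cdot)$ equals an explicit $g,q$-independent constant plus $O(\log g/g)$, so $\av{\prod_{k\in T}Z_k(h)}=O(1)$ uniformly in $q$, and summing over $T$ gives $\av{X(h)}=O(1)$.

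The hardest part is really the first step: producing a genuinely pointwise nonnegative majorant. The product $C(\uo_S;h)$ changes sign and, for exceptional $h$, can be as large as a positive power of $q$, so there is no hope of dominating it directly by anything of ``$\AES$'' type; replacing each factor by the nonnegative zero-count $Z_k(h)+c_k$ of a majorant $v_k$ is what makes both the pointwise domination and the subsequent reduction to lower-order statistics $M(\cdot)$ possible. The one thing to watch thereafter is that all error terms, and hence the final constant, are uniform in the finite field size $q$ — which holds because the errors in Proposition~\ref{prop1} and in the inductive hypothesis \eqref{mb} both are.
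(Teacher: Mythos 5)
Your proposal is correct and follows essentially the same argument as the paper: pass through Proposition~\ref{prop1} to replace each $\AES(u_k;h)$ factor by a sum over zeros plus a bounded constant, dominate the zero-sums by those of nonnegative majorants $v_k\ge|u_k|$ with $\hat v_k$ still compactly supported (as in Rubinstein's Lemma 2), and then reduce the average of the resulting nonnegative product to lower-order quantities $M(\cdot)$ covered by the inductive hypothesis~\eqref{mb}, which applies precisely because $S\subsetneq\mm$. The paper organizes this slightly more recursively (splitting $C(\uo_S)$ into a top zero-sum product and lower $C(\uo_T)$ terms and inducting on $|S|$), whereas you build the dominating function $X=\prod_{k\in S}(Z_k+c_k)$ explicitly and expand, but the content and the places where the support condition and uniformity in $q$ are used are identical.
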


\begin{proof} Since $C(\uo_S)=\prod_{i\in S} \AES(u_i;h)$ and by Proposition \ref{prop1} we can write
$$C(\uo_S)=\prod_{i\in S}\sum_\th u_i(\th)+\sum_{T\subsetneq S}C(\uo_T)\cdot O(1),$$ where the sum is over the normalized L-zeros corresponding to $h$.
We may assume by induction that $C(\uo_T),T\subsetneq S$ (and
therefore also $C(\uo_T)\cdot O(1)$) satisfy the assertion, so it is
enough to prove it for $\prod_{i\in S}\sum_\th u_i(\th)$. For this
we may replace the $u_i$ with $v_i\ge|u_i|$ s.t. $\hat{v}_i$ is
supported on $(-s_i,s_i)$, as we did in the proof of Proposition
\ref{arg}, so that $$\prod_{i\in S}\sum_\th v_i(\th)\ge
\mid\prod_{i\in S}\sum_\th u_i(\th)\mid$$ for all $h$. Now since $S$
is proper we can apply our induction hypothesis.
\end{proof}

If $S=\{k_1,...,k_\nu\}$ we have
$M(u_{k_1},...,u_{k_\nu})=\av{C(\uo_S)}$. If $S$ is a proper subset
of $\mm$ we may assume by induction that
$$M(u_{k_1},...,u_{k_\nu})=B(u_{k_1},...,u_{k_\nu})+O(\log g/g),$$
where $B(u_{k_1},...,u_{k_\nu})$ depends only on
$u_{k_1},...,u_{k_\nu}$.

\begin{prop}\label{arg2} Let $\uf\in\Pi_m$ be a partition.
let $\{a_i,b_i\},i=1,...,\mu$ be the two-element sets appearing in
$\uf$ and $\{c_i\}$, $i=1,...,\ka$, the one-element sets appearing
in $\uf$. Assume that at least one element of $F\in\uf$ satisfies
$|F|>1$. If some $F\in\uf$ has more than two elements then
$\av{C(\uf)}=O(1/g)$. Otherwise
$$\av{C(\uf)}=B(u_{k_1},...,u_{k_\ka})2^\mu\prod_{i=1}^\mu\int_\R \uu_{a_i}(t)\uu_{b_i}(t)|t|\d t + O(\log g/g).$$\end{prop}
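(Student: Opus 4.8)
The plan is to expand $C(\uf;h)=\prod_{l=1}^{\nu}C(F_l;h)$ over the blocks of $\uf$ and to sort the blocks by size, feeding each type into the appropriate earlier result. By Proposition~\ref{prop2}, a two-element block $\{a_i,b_i\}$ contributes $C(\{a_i,b_i\};h)=D_i+O(\log g/g)$ with $D_i:=2\int_\R \uu_{a_i}(t)\uu_{b_i}(t)|t|\,\d t$ a deterministic $O(1)$ quantity and the error uniform in $h\in\hgq$; a block $F$ with $|F|\ge 3$ contributes a factor $C(F;h)=O(g^{-|F|})$, again uniformly in $h$; and a one-element block $\{c_j\}$, $j=1,\dots,\ka$, contributes the genuinely $h$-dependent factor $\AES(u_{c_j};h)=C(\{c_j\};h)$, whose product over $j$ equals $C(\uo_S;h)$ for $S:=\{c_1,\dots,c_\ka\}\ss\mm$.

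For the first assertion I would argue that if some block has $|F|\ge 3$, then $C(F;h)\ll g^{-3}$ uniformly in $h$ while each two-element factor is $O(1)$ uniformly in $h$, so $|C(\uf;h)|\ll g^{-3}\,|C(\uo_S;h)|$. Because $\uf$ contains a block of size at least $3$, $S$ is a proper subset of $\mm$, so Lemma~\ref{upper} supplies a nonnegative $X$ with $X\ge |C(\uo_S;\cdot)|$ and $\av{X}=O(1)$; hence $\av{|C(\uf;h)|}\ll g^{-3}\av{X}=O(1/g)$. The point to stress is that the singleton product is not bounded pointwise in $h$, so the estimate has to be routed through Lemma~\ref{upper} rather than through a crude pointwise bound.

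For the second assertion (all blocks of size $\le 2$, with $\mu\ge 1$ of size $2$), I would substitute $C(\{a_i,b_i\};h)=D_i+O(\log g/g)$ and expand the product over the boundedly many two-element blocks, using $D_i=O(1)$ to get $\prod_{i=1}^\mu(D_i+O(\log g/g))=\prod_{i=1}^\mu D_i+O(\log g/g)$. Thus $C(\uf;h)=\bigl(\prod_i D_i\bigr)C(\uo_S;h)+O(\log g/g)\cdot C(\uo_S;h)$. Averaging: the error term is $O(\log g/g)$ since $\av{|C(\uo_S;h)|}=O(1)$ by Lemma~\ref{upper} (here $S$ is proper because $\mu\ge1$, so $|S|\le m-2$); the main term is $\bigl(\prod_i D_i\bigr)\av{C(\uo_S;h)}=\bigl(\prod_i D_i\bigr)M(u_{c_1},\dots,u_{c_\ka})$, which by the induction hypothesis~\eqref{mb} (applicable as $\ka<m$) equals $\bigl(\prod_i D_i\bigr)B(u_{c_1},\dots,u_{c_\ka})+O(\log g/g)$. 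Since $\prod_i D_i=2^\mu\prod_i\int_\R\uu_{a_i}(t)\uu_{b_i}(t)|t|\,\d t$, this is the claimed identity (with $B(u_{c_1},\dots,u_{c_\ka})$ the $B(u_{k_1},\dots,u_{k_\ka})$ of the statement); the case $\ka=0$ degenerates correctly, $C(\uf;h)$ being deterministic and $B$ of no arguments equal to $1$.

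The main obstacle — essentially the only step that is not pure bookkeeping — is controlling the averages of products of the singleton factors $\AES(u_{c_j};h)$, which are not uniformly bounded in $h$. This is precisely what Lemma~\ref{upper} (together with the inductive statement~\eqref{mb}) provides, so once those are invoked the remainder is expanding products and carrying along the uniform-in-$h$ error terms from Proposition~\ref{prop2}.
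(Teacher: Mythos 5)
Your proof is correct and follows essentially the same route as the paper's: factor $C(\uf;h)=C(\ug;h)\,C(\uo_S;h)$ with $\ug$ the non-singleton blocks, apply Proposition~\ref{prop2} (noting the $h$-uniformity of its error) to $C(\ug)$, control the singleton factor $C(\uo_S;h)$ in average via Lemma~\ref{upper}, and identify $\av{C(\uo_S)}$ with $B(u_{c_1},\dots,u_{c_\ka})$ by the inductive hypothesis~\eqref{mb}. The only differences are cosmetic: you spell out the $h$-uniformity and the $\ka=0$ degenerate case, which the paper leaves implicit.
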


\begin{proof} Denote $S=\{c_i,1\le i\le\ka\}$. We have $\av{C(\uo_S)}=B(u_{c_1},...,u_{c_\mu})+O(\log g/g)$. Denote $\ug=\uf\setminus\uo_S$ (these are
exactly the sets with more than one element in $\uf$). If at least
one set in $\uf$ has more than two elements then by Proposition
\ref{prop2} $C(\ug)=O(1/g)$. Otherwise
$$
C(\ug)=2^\mu\prod_{i=1}^\mu\int_\R \uu_{a_i}(t)\uu_{b_i}(t)|t|\d
t+O(\log g/g).
$$
In both cases we want to show that if we multiply the corresponding
error by $C(\uo_S)$ and average we get the same order of error. This
follows from Lemma \ref{upper}, since we can bound $C(\uo_S)$ by a
suitable $X(h)$.
\end{proof}

In the next subsection we will show that
\beq\label{rd}\av{R(\uo)}=D(u_1,...,u_m)+O(1/g),\eeq where
$D(u_1,...,u_m)$ is an explicit expression depending only on
$u_1,...,u_m$. For a subset $S=\{k_1,...,k_\nu\}\in\mm$ we denote
$D(S)=D(u_{k_1},...,u_{k_\nu})$. Assuming \rf{rd} we prove the
following:
\begin{prop}\label{pairup}
$M=B+O(\log g/g)$, where
\begin{multline*}
B(u_1,...,u_m)=2^{m/2}\sum_{\mathrm{pair}\mbox{ }\mathrm{up}\mbox{ }\mm}\prod_{i=1}^{m/2}\int_\R \uu_{a_i}(t)\uu_{b_i}(t)|t|\d t+\\
+\sum_{S\subsetneq\mm}2^{|S|/2}\sum_{\mathrm{pair}\mbox{
}\mathrm{up}\mbox{ }S} \prod_{i=1}^{|S|/2}\int_\R
\uu_{a_i}(t)\uu_{b_i}(t)|t|\d t \cdot D(S^c).
\end{multline*}
Here the first sum is over all perfect pairings of $\mm$, i.e.
partitions of $\mm$ of the form $$\{\{a_i,b_i\},i=1,...,m/2\},
a_i\neq b_i$$ (if $m$ is odd the sum is empty), the second sum is
over the proper subsets $S\ss\mm$, the third sum is like the first
only for $S$ and $S^c=\{1,...,m\}\setminus S$.\end{prop}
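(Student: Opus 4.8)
The plan is to extract $M=\av{C(\uo)}$ from a single M\"obius inversion, using \eqref{rd} for the finest partition $\uo$ and Proposition~\ref{arg2} for all the coarser ones. Inverting $C(\uo)=\sum_{\ug}R(\ug)$ as in \eqref{cr} and separating the term $\ug=\uo$ (where $\mu(\uo,\uo)=1$) gives
\[
M=\av{C(\uo)}=\av{R(\uo)}-\sum_{\ug\neq\uo}\mu(\uo,\ug)\av{C(\ug)},
\]
and by \eqref{rd} the first term on the right is $D(u_1,\dots,u_m)+O(1/g)$. Since $m$ is fixed there are only finitely many partitions $\ug$ of $\mm$, so to prove the proposition it suffices to evaluate $\av{C(\ug)}$ for each $\ug\neq\uo$ up to $O(\log g/g)$ --- which is exactly what Proposition~\ref{arg2} supplies.

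I would first fix notation. For a two-element set $e=\{a,b\}\subseteq\mm$ write $I(e)=2\int_\R\uu_a(t)\uu_b(t)\,|t|\,\d t$, and for $S\subseteq\mm$ put $\mathcal P(S)=\sum_{\pi}\prod_{e\in\pi}I(e)$, the sum over all perfect matchings $\pi$ of $S$, so that $\mathcal P(\emptyset)=1$ and $\mathcal P(S)=0$ when $|S|$ is odd. Writing $B(T)$, $D(T)$ for $B$, $D$ evaluated at the functions indexed by $T\subseteq\mm$ (with $B(\emptyset)=D(\emptyset)=1$), the quantity $B$ in the statement is $B(\mm)=\sum_{S\subseteq\mm}\mathcal P(S)\,D(\mm\setminus S)$ (the $S=\mm$ term is the first sum in the statement, $S=\emptyset$ gives $D(\mm)$). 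Now by Proposition~\ref{arg2}, any $\ug\neq\uo$ having a block of size $\ge3$ has $\av{C(\ug)}=O(1/g)$, while a partition all of whose blocks have size $1$ or $2$ is the same data as a nonempty $S\subseteq\mm$ (the union of its two-element blocks) together with a perfect matching $\pi$ of $S$; such $\ug$ has $\mu(\uo,\ug)=(-1)^{|S|/2}$ (each two-element block contributes $(-1)^{2-1}(2-1)!=-1$) and $\av{C(\ug)}=B(\mm\setminus S)\prod_{e\in\pi}I(e)+O(\log g/g)$. Summing over the matchings $\pi$ of a fixed $S$ turns $\prod_{e\in\pi}I(e)$ into $\mathcal P(S)$, so
\[
M=D(\mm)-\sum_{\substack{S\subseteq\mm\\ |S|\ge2}}(-1)^{|S|/2}\,\mathcal P(S)\,B(\mm\setminus S)+O(\log g/g).
\]

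Finally I would substitute the explicit formula $B(\mm\setminus S)=\sum_{T\subseteq\mm\setminus S}\mathcal P(T)\,D(\mm\setminus S\setminus T)$ and collect terms by $U:=S\sqcup T$. For $U\neq\emptyset$ the resulting inner sum $\sum_{S\sqcup T=U,\,|S|\ge2}(-1)^{|S|/2}\mathcal P(S)\mathcal P(T)$ equals $\bigl(\sum_{S\sqcup T=U}(-1)^{|S|/2}\mathcal P(S)\mathcal P(T)\bigr)-\mathcal P(U)$, the omitted contributions being the $S=\emptyset$ term $\mathcal P(U)$ and the vanishing $|S|=1$ term, so the whole statement reduces to the combinatorial identity
\[
\sum_{S\sqcup T=U}(-1)^{|S|/2}\,\mathcal P(S)\,\mathcal P(T)=0\qquad\text{for } U\neq\emptyset,
\]
since with it $M$ collapses to $\sum_{U\subseteq\mm}\mathcal P(U)D(\mm\setminus U)+O(\log g/g)=B+O(\log g/g)$. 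The identity has a one-line proof: a pair of perfect matchings $(\pi_S,\pi_T)$ of $S$ and $T=U\setminus S$ amounts to a perfect matching $\pi$ of $U$ (there is none unless $|U|$ is even) equipped with a $2$-colouring of its edges by $\{S,T\}$, and since $(-1)^{|S|/2}=(-1)^{\#\{e\in\pi\,:\,e\text{ coloured }S\}}$ the sum over colourings factors as $\prod_{e\in\pi}\bigl(1+(-1)\bigr)=0$ whenever $\pi$ contains an edge, i.e.\ whenever $U\neq\emptyset$. (Equivalently this is Isserlis' theorem for the degenerate Gaussian vector $Z_k=\sqrt{-1}\,X_k+Y_k$, where $X,Y$ are independent centred Gaussians with $\mathbb E[X_aX_b]=\mathbb E[Y_aY_b]=I(\{a,b\})$, so that $\mathbb E[Z_aZ_b]=0$.)

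The only genuinely delicate point is the bookkeeping: getting the sign of $\mu(\uo,\ug)$, the conventions $\mathcal P(\emptyset)=B(\emptyset)=D(\emptyset)=1$, and the summation ranges to match up so that the cancellation above is exact. The arithmetic inputs --- Proposition~\ref{arg2} and \eqref{rd} --- enter only as black boxes, and since $m$ is fixed everything after them is a finite, $q$-independent combinatorial manipulation, so the error term indeed remains $O(\log g/g)$ uniformly in $q$.
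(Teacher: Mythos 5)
Your argument is correct, and it is a genuinely different route from the paper's. The paper expands $M=\av{C(\uo)}=\sum_{\uf}\av{R(\uf)}$ and evaluates $\av{R(\uf)}$ directly for every partition $\uf$ with blocks of size at most $2$: for $\uf$ consisting of pairs $\{a_i,b_i\}$ and a singleton set $S$, it establishes
\[
\av{R(\uf)}=D(S)\cdot 2^\mu\prod_{i=1}^\mu\int_\R\uu_{a_i}(t)\uu_{b_i}(t)|t|\,\d t+O(\log g/g)
\]
by writing $\lb\prod_iC(\{a_i,b_i\})\rb R(\uo_S)=R(\uf)+\sum_\ug R(\ug)$ (the extra $\ug$'s all have a block of size $>2$, hence negligible) and then re-running the argument of Proposition~\ref{arg2} together with Lemma~\ref{upper}; summing over $\uf$ gives the stated $B$ directly, with no combinatorial identity needed. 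You instead run the M\"obius inversion the other way, $M=\av{R(\uo)}-\sum_{\ug\neq\uo}\mu(\uo,\ug)\av{C(\ug)}$, treat \eqref{rd} and Proposition~\ref{arg2} as black boxes (so no fresh appeal to Lemma~\ref{upper} is required, since you only ever add finitely many $O(\log g/g)$ errors multiplied by bounded, $g$- and $q$-independent coefficients), then substitute the inductive formula for $B$ on the strictly smaller sets $\mm\setminus S$ and close with the Isserlis-type cancellation $\sum_{S\sqcup T=U}(-1)^{|S|/2}\mathcal P(S)\mathcal P(T)=0$ for $U\neq\emptyset$, with $\mathcal P$ your weighted perfect-matching sum. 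The trade-off is roughly even: you save the paper's second asymptotic argument for the hybrid $\av{R(\uf)}$ at the cost of the combinatorial identity, which your two-colouring argument proves cleanly. One point to make explicit if you write this up: your proof strengthens the induction hypothesis from mere existence of a $g,q$-independent $B$ (as in \eqref{mb}) to the explicit formula of Proposition~\ref{pairup} for all $m'<m$, so the induction must be run at the level of this proposition; the base cases $m\le 1$ are immediate from \eqref{rd}, since then $\uo$ is the only partition of $\mm$.
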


\begin{proof}
We have \beq\label{eq1}
M(u_1,...,u_m)=C(\uo)=\sum_{\uf\in\Pi_m}R(\uf). \eeq First we note
that if $\uf$ is a partition of $S\ss\mm$ that has an element
$F\in\uf$ with $|F|>2$ then $\av{R(\uf)}=O(1/g)$. This follows from
\rf{cr} and Proposition \ref{arg2}. Next we observe that if $|F|=2$
for all the elements $F\in\uf$ then $\av{R(\uf)}=\av{C(\uf)}$,
because of \rf{cr}, the fact that $\mu(\uf,\uf)=1$ and because every
proper $\ug\succ\uf$ has $F\in\ug$ with more than two elements. More
generally, if $a_1,...,a_\mu,b_1,...,b_\mu,c_1,...,c_\ka\in\mm$ are
distinct elements and
$$\uf=\{\{a_1,b_1\},...,\{a_\mu,b_\mu\},\{c_1\},...,\{c_\ka\}\}, \quad S=\{\{c_1\},...,\{c_\ka\}\},$$
then
$$\lb\prod_{i=1}^\mu C(\{a_i,b_i\})\rb R(\uo_S) = R(\uf) + \sum_{\ug\succ\uf}R(\ug),$$
where the sum is only over those proper $\ug\succ\uf$ which leave
the elements of $S$ in different sets. In particular each $\ug$
contains a set with more than two elements. We conclude that
$$\av{R(\uf)}=\av{\lb\prod_{i=1}^\mu C(\{a_i,b_i\})\rb
R(\uo_S)}+O(1/g).$$ Now the proof of Proposition \ref{arg2} can be
imitated to show that
$$\av{R(\uf)}=D(S)\cdot 2^\mu\prod_{i=1}^\mu\int_\R \uu_{a_i}(t)\uu_{b_i}(t)|t|\d t + O(\log g/g)$$
(the required bound $|R(\uo_S)|\le X(h)$ with $\av{X(h)}=O(1)$
follows from \rf{cr}, Lemma \ref{upper} and Proposition \ref{arg2}).
Combining this with \rf{eq1} gives the assertion.\end{proof}

It remains for us to evaluate $\av{R(\uo)}$ and show that
$$\av{R(\uo)}=D(u_1,...,u_m)+O(\log g/g),$$ where $D(u_1,...,u_m)$ is
an explicit expression depending on $u_1,...,u_m$ (and find this
expression). We recall that (compare \eqref{def of R(S)})
\beq\label{r} R(\uo)=\frac{1}{g^m}\sum_{r_1,...,r_m=1}^\ity
\lb\prod_{i=1}^m \uu_i\lb\frac{r_i}{2g}\rb \frac{r_i}{q^{r_i/2}}\rb
\sum_{\substack{\deg P_i=r_i
\\ \mathrm{distinct}\,  \mathrm{primes}}}\leg{h}{P_1\cdots
P_m}. \eeq

To evaluate the average of this expression we need to know, for a
particular tuple $(r_1,...,r_m)$, the average of
\begin{equation}\label{def of P}
\mathcal P(r_1,...,r_m):=\lb\prod_{i=1}^m \frac{r_i}{q^{r_i/2}}\rb
\sum_{\substack{\deg P_i=r_i
\\ {\mathrm{distinct \,primes}}}}\leg{h}{P_1 \cdots
P_m}.
\end{equation}
We will compute this average in the following section.

\section{Estimation of $\ave{\mathcal P(\vec r)}$  }
In this section we focus  on the contribution $\mathcal P(\vec r)$
of different primes defined by \eqref{def of P}. We use
\eqref{Averaging quadratic characters} and the explicit formula of
\eqref{explicit formula} for the mean value of $\mathcal{P}(\vec
r)$:

\begin{equation} \label{AveP}
\begin{split}
\langle\mathcal{P}(\vec r)\rangle   &= \frac{\prod_{j=1}^{m} r_{j}
}{q^{\frac{\sum r_{j}}{2}+2g}(q-1)} \sum_{\substack{ \deg
P_{j}=r_{j}\\P_{i}\neq
P_{l}}}\sum_{2\alpha+\beta=2g+1}\sum_{\substack{\deg A=\alpha\\
\gcd(A,P_{j})=1}}\mu(A)\sum_{\deg
B=\beta}\left(\frac{B}{\prod_{j=1}^{m}P_{j}}\right) \\
&= \frac{\prod_{j=1}^{m}r_{j}}{q^{\frac{\sum r_{j}}{2}+2g}(q-1)}
\sum_{0\leq\alpha\leq g}\sigma(\vec r;\alpha)S(2g+1-2\alpha;\vec r).
\end{split}
\end{equation}

\begin{prop}\label{prop:distinct prime first eval}
Assume that $\sum r_j<(1-\delta)4g$.  Then
\begin{equation*}
\langle\mathcal{P}(\vec r)\rangle
=\frac{\prod_{j=1}^{m}r_{j}}{q^{\frac{\sum r_{j}}{2}+2g}(q-1)}
(S(2g+1;\vec r) -qS(2g-1;\vec r)) + O(q^{-\delta g}+q^{-r_1/2}).
\end{equation*}
\end{prop}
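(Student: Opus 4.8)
The plan is to start from the exact identity \eqref{AveP}, which expresses $\langle\mathcal{P}(\vec r)\rangle$ as the prefactor $\frac{\prod_j r_j}{q^{\sum_j r_j/2+2g}(q-1)}$ times the sum $\Sigma:=\sum_{0\le\alpha\le g}\sigma(\vec r;\alpha)\,S(2g+1-2\alpha;\vec r)$. By Lemma~\ref{lemm: mobius} one has $\sigma(\vec r;0)=1$, $\sigma(\vec r;1)=-q$, and $\sigma(\vec r;\alpha)=0$ for $2\le\alpha<r_1$ (recall $r_1=\min_j r_j$), so
\[
\Sigma=\bigl(S(2g+1;\vec r)-qS(2g-1;\vec r)\bigr)+\sum_{r_1\le\alpha\le g}\sigma(\vec r;\alpha)\,S(2g+1-2\alpha;\vec r).
\]
The first bracket, multiplied by the prefactor, is exactly the asserted main term, so everything reduces to showing that the prefactor times the tail sum is $O(q^{-\delta g}+q^{-r_1/2})$.

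To estimate the tail, I would fix $\alpha$ with $r_1\le\alpha\le g$ and write $\beta:=2g+1-2\alpha$, which is odd. There are three regimes for $S(\beta;\vec r)$. If $\beta\ge\sum_j r_j$ then $S(\beta;\vec r)=0$ by Lemma~\ref{lemm: s equals zero}. If $\beta\le\sum_j r_j-2$, Proposition~\ref{prop: s eval} applies, and since $\beta$ is odd, together with $|\Phi_\beta(\vec r)|\le 2^m$ (Lemma~\ref{lemPhi}) and $\pi(r)\ll q^r/r$ it gives $|S(\beta;\vec r)|\ll q^{\beta+\sum_j r_j/2}/\prod_j r_j+(\sum_j r_j)^m q^{\sum_j r_j}$. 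Finally $\beta=\sum_j r_j-1$ is possible (forcing $\sum_j r_j$ even) for the single value $\alpha=g+1-\tfrac{1}{2}\sum_j r_j$, and there \eqref{s-1 even} gives $|S(\sum_j r_j-1;\vec r)|\ll q^{3\sum_j r_j/2-1}/\prod_j r_j+q^{3\sum_j r_j/2-r_1}$.

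Then I would combine these with the bound $|\sigma(\vec r;\alpha)|\le(q+1)\alpha^m/\prod_j r_j$ from \eqref{bound on sigma}. The key algebraic point is that after multiplying by the prefactor and substituting $\beta=2g+1-2\alpha$, each ``leading'' part of $S$ collapses to $\ll\alpha^m q^{1-2\alpha}$, all powers of $\prod_j r_j$ cancelling: $q^{\beta+\sum_j r_j/2}\cdot q^{-\sum_j r_j/2-2g}=q^{\beta-2g}=q^{1-2\alpha}$, and at its own $\alpha$ the $\beta=\sum_j r_j-1$ term likewise produces $q^{\sum_j r_j-1-2g}=q^{1-2\alpha}$. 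Writing $q^{1-2\alpha}=q^{-\alpha/2}\cdot q^{1-3\alpha/2}$ and noting that $\alpha^m q^{1-3\alpha/2}$ is bounded uniformly in $q$ and $\alpha\ge 2$, the geometric series $\sum_{\alpha\ge r_1}q^{-\alpha/2}$ contributes $O(q^{-r_1/2})$. The remaining part, coming from the Weil error $(\sum_j r_j)^m q^{\sum_j r_j}$ of $S$, contributes $\ll\alpha^m(\sum_j r_j)^m q^{\sum_j r_j/2-2g}$ per term; since $\sum_j r_j<4g(1-\delta)$ gives $q^{\sum_j r_j/2-2g}<q^{-2\delta g}$, and summing $\alpha^m$ over at most $g$ values costs a factor $g^{m+1}$, this is $\ll g^{2m+1}q^{-2\delta g}\ll q^{-\delta g}$, uniformly in $q$. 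Adding the two contributions finishes the proof.

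I expect the genuine obstacle to be exactly this last accounting. Superficially it looks hopeless, because after dividing out the prefactor the ``leading'' size of $S(\beta;\vec r)$ carries the power $q^{\sum_j r_j-2g}$, whose exponent is positive — as large as about $2g(1-2\delta)$ — whenever $\sum_j r_j$ is close to $4g$. The way out is that $S(\beta;\vec r)$ only reaches this size when $\beta$ is within $O(1)$ of $\sum_j r_j$, i.e. when $\alpha$ is small, and precisely there the relation $\sum_j r_j=2g+2-2\alpha$ turns the bad exponent into $1-2\alpha$, which is then summable against the range of $\alpha$. One also has to isolate the single boundary value $\beta=\sum_j r_j-1$ — the term produced by the functional equation \eqref{s-1 even} — and estimate it separately rather than folding it into Proposition~\ref{prop: s eval}.
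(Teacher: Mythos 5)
Your proof is correct and follows essentially the same route as the paper: isolate the $\alpha=0,1$ terms of \eqref{AveP} as the main term, then bound the tail over $\alpha\ge\max(2,r_1)$ by combining the M\"obius bound \eqref{bound on sigma} with the duality estimate of Proposition~\ref{prop: s eval} (and \eqref{s-1 even} at the boundary $\beta=\sum r_j-1$), exploiting the cancellation $q^{\beta-2g}=q^{1-2\alpha}$ to produce the $q^{-r_1/2}$ term and the hypothesis $\sum r_j<(1-\delta)4g$ to absorb the Weil error into $q^{-\delta g}$. The only difference is organizational: the paper splits on $\sum r_j\le 2g-3$, $\ge 2g-1$, and the exceptional point $\sum r_j=2g-2,\alpha=2$, whereas you treat the boundary value $\beta=\sum r_j-1$ uniformly in $\alpha$ via \eqref{s-1 even}, which is cleaner and picks up the boundary terms with $\sum r_j<2g-2$ as well (you should, however, also check explicitly that the $q^{3\sum r_j/2-r_1}$ error in \eqref{s-1 even} is controlled after multiplication by the prefactor and $\sigma$ — it collapses to $\ll\alpha^m q^{2-2\alpha-r_1}\ll q^{-r_1/2}$, just like the main part).
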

\begin{proof}
  It suffices to show that the terms with $\alpha\geq
2$ contributes $ O(q^{-\delta g}+q^{-r_1/2})$.

Note that $\sigma(\vec r,\alpha)=0$ unless $\alpha\geq r_1:=\min
r_j$ by Lemma \ref{lemm: mobius}. Thus it suffices to take
$\alpha\geq r_{1}$. Recall that in any case,
\begin{equation}\label{Bd on sigma}
|\sigma(\vec r;\alpha)|\leq (q+1)\frac{\alpha^m}{\prod r_j}.
\end{equation}


If $\sum r_j \leq 2g-3$ then $S(2g+1-2\alpha,\vec r)=0$ for
$\alpha\geq 2$ by Lemma~\ref{lemm: s equals zero}. Thus we may
assume that $\sum r_j\geq 2g-2$.

We first assume that $\sum r_j\geq 2g-1$  so that for $\alpha\geq
2$, we have $\beta\leq \sum r_j-2$. Using duality, we obtained a
bound for the sums $S(\beta;\vec r)$ in Proposition~\ref{prop: s
eval} which implies that if
$\beta\leq\sum r_j-2$, 
\begin{equation} \label{duality bound}
| S(\beta;\vec r) | \ll q^{\beta-\frac 12 \sum r_j}\prod \pi(r_j)
+(\textstyle\sum r_j)^m q^{\sum r_j} \;.
\end{equation}


We insert \eqref{duality bound} into \eqref{AveP} and first bound
the contributions of the second term on the RHS of \eqref{duality
bound}, call it $II$, namely of $ (\sum r_j)^m q^{\sum r_j} $.
Inserting \eqref{Bd on sigma} and using $\sum r_j<(1-\delta)4g$ we
get
\begin{equation}
\begin{split}
II &\ll \frac 1{q^{2g+1}}\prod \frac{r_j}{q^{r_j/2}}\sum_{r_1\leq
\alpha\leq g} q\frac{\alpha^m}{\prod r_j}(\textstyle\sum r_j)^m q^{\sum r_j}\\
&\ll q^{\frac 12\sum r_j-2g} g^{2m+1} \ll q^{-\delta g}  \;.
\end{split}
\end{equation}

Now for the contribution of the first term on the RHS of
\eqref{duality bound}, call it $I$, which we can bound by
\begin{equation}
\begin{split}
I&\ll \frac 1{q^{2g+1}}\prod \frac{r_j}{q^{r_j/2}}\sum_{r_1 \leq
\alpha\leq g} q\frac{\alpha^m}{\prod r_j}q^{2g+1-2\alpha-\frac 12
\sum
r_j}\prod \pi(r_j) \\
&\ll \frac q{\prod r_j} \sum_{ \alpha \geq r_1  }
\frac{\alpha^m}{q^{2\alpha}}   \ll \frac q{\prod
r_j}\frac{r_1^m}{q^{2 r_1}} \ll q^{-r_1/2}
\end{split}
\end{equation}
on using the bound
$\sum_{\alpha\geq r} \alpha^m z^\alpha \ll_m r^m z^r$,  ($|z|\leq
\frac 14$, $r\geq 1$, $m\geq 1$),
giving our claim when $\sum r_j\geq 2g-1$.

It remains to deal with the case $\sum r_j=2g-2$ and $\alpha=r_1=2$,
where we need to bound the contribution to $\ave{\mathcal P(\vec
r)}$ of
\begin{equation}\label{extra term}
\frac 1{(q-1)q^{2g}} \prod \frac{r_j}{q^{r_j/2}} \sigma(\vec r,2)
S(2g-3,\vec r) \ll \frac 1{q^{2g+\frac 12 \sum r_j}}|S(2g-3,\vec
r)|.
\end{equation}
By \eqref{s-1 even}, if $\sum r_j-1=2g-3$ then
$$
|S(2g-3,\vec r)|\ll q^{3g-3} \left(\frac 1{q\prod r_j}+ \frac
1{q^{r_1}}\right)
$$
and hence
$$
\eqref{extra term} \ll \frac 1{q^2}\left(\frac 1{q\prod r_j}+ \frac
1{q^{r_1}}\right)\;,
$$
and since $\prod r_j \geq \max r_j \geq  \sum r_j/m \geq g/m$, we
recover the proposition in this case as well.
\end{proof}

We now compute $\ave{\mathcal P(\vec r)}$.  For a subset of indices
$I\subseteq \mathbf m=\{1,\dots,m\}$ we denote its complement by
$I^c$. Each subset $I\subseteq \mathbf m$ defines a hyperplane
\begin{equation}\label{exceptional}
\sigma(I^c)-\sigma(I)=2g \;.
\end{equation}
We will call these $2^m$ hyperplanes "exceptional".
\begin{prop}\label{prop:computing aveP}
Assume $\sum r_j<(1-\delta)4g$.

i)  If $\sum_{j=1}^{m}r_{j}>2g+2$ and $\sum_{j=1}^{m}r_{j}$ is even,
then  away from the exceptional hyperplanes \eqref{exceptional} we
have
\begin{equation}\label{P generic}
\ave{\mathcal P(\vec r)} = -\sum_{\sigma(I)< \sigma(I^c)-2g}
(-1)^{|I|} + O(q^{-\delta g} +q^{-r_1/2}).
\end{equation}

ii) If $\sum_{j=1}^m r_j = 2g,   2g+2$  or if $\sum_{j=1}^m
r_j>2g+2$ and \eqref{exceptional}  holds for some  $I\subset \mathbf
m$, then
\begin{equation}\label{P exceptional}
|\ave{\mathcal{P}(\vec r)}|  =O(1).
\end{equation}

iii)  If $\sum_{j=1}^{m}r_{j}<2g$  or if $\sum_{j=1}^{m}r_{j}>2g $
 and   $\sum_{j=1}^{m}r_{j}$ is odd, then
$$
|\ave{\mathcal{P}(\vec r)}|   \ll  q^{-\delta g} +q^{-r_1/2}.
$$

\end{prop}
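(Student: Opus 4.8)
The plan is to use Proposition~\ref{prop:distinct prime first eval}, which under the standing hypothesis $\sum r_j<(1-\delta)4g$ reduces the computation to
\[
\ave{\mathcal P(\vec r)}=\frac{\prod_{j} r_j}{q^{\frac{\sum r_j}{2}+2g}(q-1)}\big(S(2g+1;\vec r)-qS(2g-1;\vec r)\big)+O\big(q^{-\delta g}+q^{-r_1/2}\big),
\]
and then to evaluate or bound $S(2g\pm1;\vec r)$ in each regime. Everything comes down to which of Lemma~\ref{lemm: s equals zero}, the first line of \eqref{s-1 even}, and Proposition~\ref{prop: s eval} applies, which is dictated by the size of $\sum r_j$ against $2g$ (does $2g\pm1$ exceed $\sum r_j$?) and by the parity of $\sum r_j$ (this controls whether the square/main term in Proposition~\ref{prop: s eval} survives). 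I will use throughout the cheap inequalities $r\pi(r)\le q^r$, hence $\prod_j\pi(r_j)\le q^{\sum r_j}/\prod_j r_j$, and $|\Phi_\beta(\vec r)|\le 2^m$, the latter immediate from Lemma~\ref{lemPhi}. Part (iii) is then straightforward: if $\sum r_j<2g$ then $2g\pm1\ge\sum r_j$, so $S(2g\pm1;\vec r)=0$ by Lemma~\ref{lemm: s equals zero}; if $\sum r_j>2g$ is odd, then $S(2g\pm1;\vec r)=O((\sum r_j)^m q^{\sum r_j})$ in all cases — either by Lemma~\ref{lemm: s equals zero} when $2g+1=\sum r_j$, or by Proposition~\ref{prop: s eval}, whose main term vanishes since $\sum r_j$ is odd — and inserting this into the prefactor gives $O(g^{2m}q^{\frac{\sum r_j}{2}-2g})=O(g^{2m}q^{-2\delta g})=O(q^{-\delta g})$ by $\sum r_j<(1-\delta)4g$.

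Part (i) will be the heart. Here $\sum r_j$ is even and $\ge 2g+4$, so $2g\pm1$ are odd and $\le\sum r_j-2$, and Proposition~\ref{prop: s eval} gives $S(2g\pm1;\vec r)=q^{2g\pm1}\Phi_{2g\pm1}(\vec r)\prod_j\frac{\pi(r_j)}{q^{r_j/2}}+O((\sum r_j)^m q^{\sum r_j})$. Using $\prod_j\frac{\pi(r_j)}{q^{r_j/2}}=\frac{q^{\sum r_j/2}}{\prod_j r_j}(1+O(q^{-r_1/2}))$ together with $|\Phi_\beta|\le 2^m$, the $O$-terms pass through the prefactor as $O(q^{-\delta g})$, the relative error as $O(q^{-r_1/2})$, and the main terms collapse exactly to $\frac{q}{q-1}\Phi_{2g+1}(\vec r)-\frac{1}{q-1}\Phi_{2g-1}(\vec r)$. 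Next I will invoke Lemma~\ref{lemPhi}: with $L=\frac{\sum r_j}{2}-g$ it gives $\Phi_{2g-1}(\vec r)=-\sum_{\sigma(I)\le L}(-1)^{|I|}$ and $\Phi_{2g+1}(\vec r)=-\sum_{\sigma(I)\le L-1}(-1)^{|I|}$; the only indices where these sums differ are the $I$ with $\sigma(I)=L$, equivalently $\sigma(I^c)-\sigma(I)=2g$, so away from the exceptional hyperplanes \eqref{exceptional} one has $\Phi_{2g+1}(\vec r)=\Phi_{2g-1}(\vec r)$ and hence $\frac{q}{q-1}\Phi_{2g+1}-\frac{1}{q-1}\Phi_{2g-1}=\Phi_{2g+1}(\vec r)$. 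Finally $\sigma(I)\le L-1$ is the same as $\sigma(I)<\sigma(I^c)-2g$, which yields $\ave{\mathcal P(\vec r)}=-\sum_{\sigma(I)<\sigma(I^c)-2g}(-1)^{|I|}+O(q^{-\delta g}+q^{-r_1/2})$, i.e.\ \eqref{P generic}.

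Part (ii) splits into three sub-cases, each aiming at $|\ave{\mathcal P(\vec r)}|=O(1)$. If $\sum r_j>2g+2$ and an exceptional hyperplane \eqref{exceptional} does pass through $\vec r$, I will run the part (i) computation down to $\frac{q}{q-1}\Phi_{2g+1}-\frac{1}{q-1}\Phi_{2g-1}+O(q^{-\delta g}+q^{-r_1/2})$; now the two $\Phi$'s need no longer coincide, but each is $O(2^m)$, so $\ave{\mathcal P(\vec r)}=O(1)$. If $\sum r_j=2g$, then $S(2g+1;\vec r)=0$, and since the reciprocity sign $(-1)^{\frac{q-1}{2}(2g-1)(2g)}$ equals $1$, the first line of \eqref{s-1 even} gives the exact identity $S(2g-1;\vec r)=-q^{g-1}N$ with $N=\#\{(P_1,\dots,P_m)\ \text{distinct}:\deg P_j=r_j\}$; then $\ave{\mathcal P(\vec r)}=\frac{\prod_j r_j}{q^{2g}(q-1)}N+O(q^{-\delta g}+q^{-r_1/2})$, and $N\le\prod_j\pi(r_j)\le q^{2g}/\prod_j r_j$ forces the main term into $[0,\tfrac1{q-1}]$. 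If $\sum r_j=2g+2$, the same computation gives $S(2g+1;\vec r)=-q^{g}N$ (sign again $1$), bounded through the prefactor exactly as before, while $S(2g-1;\vec r)=S(\sum r_j-3;\vec r)$ with $\beta=2g-1$ odd and $\le\sum r_j-2$ is covered by Proposition~\ref{prop: s eval} (the relevant $\Phi_{2g-1}(\vec r)$ again being $O(2^m)$) plus an $O(q^{-\delta g})$ tail; both contributions are $O(1)$.

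I expect the main obstacle to be purely the error bookkeeping, with three delicate points. First, every Weil-type tail coming out of Proposition~\ref{prop: s eval} must be shown to shrink, which always reduces to the inequality $q^{\frac{\sum r_j}{2}-2g}<q^{-2\delta g}$, i.e.\ to the hypothesis $\sum r_j<(1-\delta)4g$ — this is precisely where that hypothesis is used. Second, in the trivial-zero cases $\sum r_j\in\{2g,2g+2\}$ one must work with the exact product form of $S(\sum r_j-1;\vec r)$ rather than its rounded asymptotic in \eqref{s-1 even} (whose stated remainder, once multiplied by the prefactor, is too crude to give $O(1)$), controlling the distinct-tuple count by $\prod_j\pi(r_j)\le q^{\sum r_j}/\prod_j r_j$. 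Third, one must verify that the clean collapse $\frac{q}{q-1}\Phi_{2g+1}-\frac{1}{q-1}\Phi_{2g-1}=\Phi_{2g+1}$ is exactly the vanishing of $\sum_{\sigma(I)=L}(-1)^{|I|}$, i.e.\ the statement that $\vec r$ lies off every exceptional hyperplane \eqref{exceptional}; the explicit combinatorial formula in Lemma~\ref{lemPhi} is what makes this step transparent.
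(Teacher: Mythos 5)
Your proposal is correct and follows essentially the same route as the paper's own proof: reduce to the two boundary character sums $S(2g\pm1;\vec r)$ via Proposition~\ref{prop:distinct prime first eval}, then sort cases by comparing $\sum r_j$ to $2g$ and $2g+2$ and by parity, invoking Lemma~\ref{lemm: s equals zero} when $2g\pm1\ge\sum r_j$, the exact (first-line) form of \eqref{s-1 even} when $2g\pm1=\sum r_j -1$, and Proposition~\ref{prop: s eval} together with Lemma~\ref{lemPhi} otherwise; the identification of $\Phi_{2g+1}-\Phi_{2g-1}$ with the sum over $\sigma(I^c)-\sigma(I)=2g$ is exactly how the paper isolates the exceptional hyperplanes. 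Your observation that the additive remainder in \eqref{s-1 even} is too crude once hit with the prefactor — and that one must instead keep the exact $\#\{\text{distinct primes}\}$ factor — is a point the paper handles implicitly by rewriting the bound multiplicatively; otherwise the two arguments line up step for step.
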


\begin{proof}
\bigskip
\noindent{\bf The case $\sum_{j=1}^{m}r_{j}<2g$:} We use Proposition
\ref{prop:distinct prime first eval} and note that in this case
$S(2g\pm1;\vec r)=0$ by Lemma \ref{lemm: s equals zero}. Hence
$$\langle\mathcal{P}(\vec r)\rangle=O(q^{-\delta g}  +q^{-r_1/2}).$$

\bigskip
\noindent{\bf The case $\sum_{j=1}^{m}r_{j}=2g$:} For $\sum
r_{j}=2g$ we have $S(2g+1;\vec r)=0$ by Lemma \ref{lemm: s equals
zero}. Thus by Proposition \ref{prop:distinct prime first eval}
$$
\langle\mathcal{P}(\vec r)\rangle=
-\frac{\prod_{j=1}^{m}r_{j}}{q^{\sum_{j=1}^{m}r_{j}/2+2g}(q-1)}qS(2g-1;\vec
r)+ O(q^{-\delta g}  +q^{-r_1/2}).
$$
By \eqref{s-1 even} and using $\sum r_{j}=2g$, we have

\begin{multline*}
\langle\mathcal{P}(\vec r)\rangle = \frac{\prod r_j}{q^{\frac 12
\sum r_j+2g}(q-1)}q \cdot q^{\frac 12 \sum r_j-1} \#\{\deg P_j=r_j,
P_i\neq P_j\} +\\+ O(q^{-\delta g}  +q^{-r_1/2})=\\
= \frac{1}{q-1}+ O(q^{-\delta g}  +q^{-r_1/2}) = O(1).
\end{multline*}

\bigskip
\noindent{\bf The case $\sum_{j=1}^{m}r_{j}=2g+1$:} We have
$S(2g+1;\vec r)=0$ by Lemma \ref{lemm: s equals zero}. Thus by
Proposition \ref{prop:distinct prime first eval}
$$
\langle\mathcal{P}(\vec r)\rangle
=-\frac{\prod_{j=1}^{m}r_{j}}{q^{\sum_{j=1}^{m}r_{j}/2+2g}(q-1)}qS(2g-1;\vec
r) + O(q^{-\delta g}  +q^{-r_1/2})
$$
By Proposition \ref{prop: s eval}, and using $\sum r_{j}=2g+1$ in
\eqref{s-1 even}, we have
$$
\langle\mathcal{P}(\vec r)\rangle \ll \frac{g^{2m}}{q^{g-\frac 12}}
+ q^{-\delta g}  +q^{-r_1/2}
  = O(q^{-\delta g}  +q^{-r_1/2}) \;.
$$

\bigskip
\noindent{\bf The case $\sum_{j=1}^{m}r_{j}=2g+2$:} By Proposition
\ref{prop:distinct prime first eval}
\begin{align*}
\langle\mathcal{P}(\vec
r)\rangle=\frac{\prod_{j=1}^{m}r_{j}}{q^{3g+1}(q-1)} (S(2g+1;\vec
r)-qS(2g-1;\vec r)) +O(q^{-\delta g}  +q^{-r_1/2}) \;.
\end{align*}
Using \eqref{s-1 even} 
we have
$$
S(2g+1;\vec r)=\frac{-q^{3g+2}}{\prod r_{j}}\Big(1+O(q^{-r_1/2})
\Big) \;,
$$
and by Proposition \ref{prop: s eval},
$$
S(2g-1;\vec r)=O\left(\frac{q^{3g}}{\prod r_{j}}\right)\;.
$$
Hence  $\ave{\mathcal{P}(\vec r)}=O(1)$.

\bigskip
\noindent{\bf The case $\sum_{j=1}^{m}r_{j}>2g+2$:} In this case
$\beta=2g\pm1$ satisfies $\beta\leq \sum_{j=1}^{m}r_{j}-2$, hence we
may use Proposition \ref{prop: s eval} which gives that for $\sum
r_j$ even, $\beta$ odd, and $\sum r_j-2\geq\beta$,
\begin{equation}\label{S in terms of Phi}
S(\beta,\vec r) =    q^\beta\Phi_\beta(\vec r) \prod
\frac{\pi(r_j)}{q^{r_j/2}} + O(q^{-\delta g} + q^{-r_1/2}).
\end{equation}
If $\sum r_j$ is odd then there is no main term.

We now insert \eqref{S in terms of Phi} and Lemma~\ref{lemPhi} in
the computation of $\ave{\mathcal P(\vec r)}$ to get that, up to a
remainder term of $O(q^{-\delta g} + q^{-r_1/2})$, we have that if
$\sum r_j>2g$, $\sum r_j$ even then
\begin{equation}\label{intermediate aveP}
\begin{split}
\ave{\mathcal P(\vec r)} &\sim \frac 1{q^{2g}(q-1)}\prod_j
\frac{r_j}{q^{r_j/2}} (S(2g+1,\vec r)-q S(2g-1,\vec r) )\\
&\sim  \Phi_{2g+1}(\vec r) +\frac{\Phi_{2g+1}(\vec r) -
\Phi_{2g-1}(\vec r)}{q-1}\\
&= -\sum_{\sigma(I)\leq L_+} (-1)^{|I|} + \frac{\Phi_{2g+1}(\vec r)
- \Phi_{2g-1}(\vec r)}{q-1},
\end{split}
\end{equation}
where $2L_+=\sum r_j-1-(2g+1)$.

We have $\sigma(I) + \sigma(I^c)=\sum r_j$ and hence the condition
$\sigma(I) \leq L_+$ becomes $\sigma(I)-\sigma(I^c) \leq -(2g+2)$,
and since $\sum r_j=\sigma(I)+\sigma(I^c)$ is even, so is
$\sigma(I)-\sigma(I^c)$ and thus this condition is equivalent to
$$\sigma(I)-\sigma(I^c) < -2g \;.$$
Moreover,
$$
\Phi_{2g+1}(\vec r) - \Phi_{2g-1}(\vec r) =
\sum_{\sigma(I^c)-\sigma(I)=2g} (-1)^{|I|}
$$
and so the second term in \eqref{intermediate aveP} vanishes off the
exceptional hyperplanes \eqref{exceptional}. Thus we have shown
\eqref{P generic} and \eqref{P exceptional}.
\end{proof}

\section{Conclusion}

Now we are ready to prove

\begin{prop}\label{main}
The mean value of $R(\uo)$ is
\begin{equation*}
\av{R(\uo)}= -2^{m-1}\sum_{I\ss\mm}(-1)^{|I|}
\int\limits_{\substack{t_1,\dots,t_m\geq 0\\ \sum t_i\ge 1\\
\sum_{i\in I}t_i\leq \sum_{i\in I^c}t_i-1}}
\prod_{i=1}^m(\uu_i(t_i)\d t_i) + O(1/g)\;.
\end{equation*}
\end{prop}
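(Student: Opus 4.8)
The plan is to start from the formula
\[
R(\uo)=\frac{1}{g^m}\sum_{r_1,\dots,r_m=1}^\infty \Big(\prod_{i=1}^m \uu_i\big(\tfrac{r_i}{2g}\big)\tfrac{r_i}{q^{r_i/2}}\Big)\sum_{\substack{\deg P_i=r_i\\ \text{distinct primes}}}\leg{h}{P_1\cdots P_m}
\]
and average term by term, replacing the inner double sum over $\vec r$ of the character sum by $\ave{\mathcal P(\vec r)}$, for which Proposition~\ref{prop:computing aveP} gives a clean asymptotic. Concretely, $R(\uo)=\frac{1}{g^m}\sum_{\vec r}\big(\prod_i\uu_i(\tfrac{r_i}{2g})\big)\ave{\mathcal P(\vec r)}$. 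The Fourier supports force $r_i<2g\,s_i$ with $\sum s_i<2$, so only tuples with $\sum r_j<(1-\delta)4g$ for some fixed $\delta>0$ contribute, exactly the range where Proposition~\ref{prop:computing aveP} applies; in particular the regime $\sum r_j>2g+2$ with $\sum r_j$ even is the only one producing a main term.

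\textbf{Step 1: discard negligible regions.} By part (iii) of Proposition~\ref{prop:computing aveP}, tuples with $\sum r_j<2g$ or with $\sum r_j$ odd contribute $O(q^{-\delta g}+q^{-r_1/2})$ each; summing $\frac{1}{g^m}\sum_{\vec r}(\cdots)$ against the rapidly decaying $\uu_i$ (which are Schwartz, hence $\uu_i(t)\ll (1+|t|)^{-A}$) shows their total contribution is $O(1/g)$ — the $q^{-r_1/2}$ piece is summable in $r_1$ uniformly, and the $q^{-\delta g}$ piece is multiplied by $O(g^m)$ tuples but beaten by $q^{-\delta g}$. Similarly, by part (ii), the finitely many slabs $\sum r_j=2g$, $\sum r_j=2g+2$, and the $2^m$ exceptional hyperplanes $\sigma(I^c)-\sigma(I)=2g$ each carry $O(1)$ per tuple, but the number of lattice points $\vec r$ on such a hyperplane with $\sum r_j<4g$ is $O(g^{m-1})$, so after the $1/g^m$ factor they contribute $O(1/g)$. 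Hence only the generic part of region (i) survives, up to $O(1/g)$.

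\textbf{Step 2: convert the main sum to an integral.} On the surviving region, Proposition~\ref{prop:computing aveP}(i) gives $\ave{\mathcal P(\vec r)}=-\sum_{\sigma(I)<\sigma(I^c)-2g}(-1)^{|I|}+O(q^{-\delta g}+q^{-r_1/2})$, and the error contributes $O(1/g)$ as above. So
\[
\av{R(\uo)}=-\frac{1}{g^m}\sum_{\substack{\vec r:\ \sum r_j\ \text{even}\\ \sum r_j>2g+2}}\Big(\prod_i\uu_i\big(\tfrac{r_i}{2g}\big)\Big)\sum_{\sigma(I)<\sigma(I^c)-2g}(-1)^{|I|}+O(1/g).
\]
Interchanging the two sums, for each fixed $I\subseteq\mm$ we recognize a Riemann sum: substituting $t_i=r_i/(2g)$ so that $\d t_i\leftrightarrow \tfrac{1}{2g}$, the factor $\tfrac{1}{g^m}=2^m\prod_i\tfrac{1}{2g}$ and the lattice-sum over the cone $\{r_i\ge 1,\ \sum r_j>2g+2,\ \sum_{i\in I}r_i<\sum_{i\in I^c}r_i-2g\}$ converges, as $g\to\infty$, to $2^m\int_{t_i\ge 0,\ \sum t_i\ge 1,\ \sum_{i\in I}t_i\le\sum_{i\in I^c}t_i-1}\prod_i\uu_i(t_i)\,\d t_i$. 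The factor $2^m$ versus the stated $2^{m-1}$ is absorbed by handling the parity constraint "$\sum r_j$ even": restricting a lattice sum to a fixed parity class halves the density in the limit, contributing the factor $\tfrac12$; one checks the odd-parity tuples in region (i) genuinely contribute $0$ to the main term (no main term when $\sum r_j$ odd, by the remark after \eqref{S in terms of Phi}), so dropping them is costless. This yields exactly the claimed formula.

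\textbf{The main obstacle} is making the passage from the lattice sum to the integral quantitative with error $O(1/g)$ rather than merely $o(1)$, uniformly in $q$. This requires: (a) controlling the Riemann-sum error for a smooth compactly-supported-in-Fourier integrand over a polyhedral region whose boundary facets include the moving hyperplanes $\sum_{i\in I}t_i=\sum_{i\in I^c}t_i-1$ and $\sum t_i=1$ — standard Euler–Maclaurin / boundary-layer estimates give $O(1/g)$ since each $\uu_i$ is smooth and the boundary has bounded $(m-1)$-dimensional measure in the support; (b) confirming that the parity restriction contributes cleanly a factor $1/2$ plus $O(1/g)$, which follows by writing $\mathbf{1}[\sum r_j\ \text{even}]=\tfrac12(1+(-1)^{\sum r_j})$ and noting the oscillating part $\tfrac12\sum_{\vec r}(-1)^{\sum r_j}\prod_i\uu_i(\tfrac{r_i}{2g})(\cdots)$ is $O(1/g)$ by summation by parts (it is a Riemann sum of a smooth function against an oscillating sign, hence small); and (c) keeping all implied constants independent of $q$, which is automatic here since after invoking Proposition~\ref{prop:computing aveP} everything is $q$-free except the already-summed error terms. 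With these in hand the proposition follows.
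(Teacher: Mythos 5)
Your proposal is correct and follows essentially the same route as the paper's proof: substitute $\ave{\mathcal P(\vec r)}$ from Proposition~\ref{prop:computing aveP}, restrict to $\sum r_j<(1-\delta)4g$ via the Fourier supports, dispose of regions (ii) and (iii) and the exceptional hyperplanes by counting $O(g^{m-1})$ or $O(g^m)$ lattice points against the $1/g^m$ prefactor and the $q^{-\delta g}$ or $q^{-r_1/2}$ errors, and then convert the remaining even-parity lattice sum to a Riemann integral with step $1/2g$, the parity constraint supplying the factor $1/2$ that turns $2^m$ into $2^{m-1}$. The only cosmetic slip is writing $R(\uo)$ where you mean $\av{R(\uo)}$ when you substitute the average $\ave{\mathcal P(\vec r)}$.
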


\begin{proof} We average \rf{r} over $\hgq$ substituting the values provided by Proposition~\ref{prop:computing aveP}.
First let us ignore the errors and examine the contribution of the
main terms. We get that the main term in $\av{R(\uo)}$ is
\begin{multline*}
-2^m\sum_{I\ss\mm}(-1)^{|I|}\sum_{\substack{r_1,...,r_m\geq 1\\
\sum r_i>2g+2\mbox{ }\mathrm{even}\\
\sum_{i\in I}r_i<\sum_{i\in I^c}r_i-2g}}
\prod_{i=1}^m\lb\uu_i\lb\frac{r_i}{2g}\rb\cdot\frac{1}{2g}\rb \\
=-2^{m-1}\sum_{I\ss\mm}(-1)^{|I|}\int\limits_{\substack{\R_{\ge 0}^m\\
\sum t_i\ge 1\\ \sum_{i\in I}t_i\le \sum_{i\in
I^c}t_i-1}}\prod_{i=1}^m(\uu_i(t_i)\d t_i) + O(1/g)
\end{multline*}
using an approximation of the integral by a Riemann sum with step
$1/2g$, with the restriction that $\sum r_j$ is even providing a
factor of $1/2$.
This is the main term in the assertion.

Now we consider the various error terms. Due to the condition on the
supports of $\uu_i$ we only need to consider $\sum r_i<
(1-\delta)4g$ for some fixed $\delta>0$. For the   error term  of
the form $O(q^{-\delta g})$, we use that the number of suitable
tuples $r_i$ is $O(g^m)$, so the total contribution of these errors
is $O(q^{-\delta g}g^m)\ll O(1/g)$. For error terms of the form
$O(q^{-\min r_j/2})$, note that for any $r$ the number of suitable
$r_1,...,r_m$ s.t. $\min(r_i)=r$ is $O(g^{m-1})$, each contributing
an error term of $g^{-m}q^{-r/2}$, so the total contribution of
these errors is $O(1/g)$. Finally, the number of $r_1,...,r_m$ on
exceptional hyperplanes  is also $O(g^{m-1})$, so the total
contribution of the additional errors is $O(1/g)$.
\end{proof}

Putting together Propositions~\ref{arg},  \ref{pairup}, \ref{main}
we obtain
\begin{thm}\label{finalthm}
Assume that $f_j\in \mathcal S(\R)$ are even  and each $\^f_j(u_j)$
is supported in the range $|u_j|<s_j$, with $\sum s_j<2$. Then
$$\av{W^{(n)}_f}=A(f_1,...,f_n)+O(\log g/g),$$
where
\begin{multline*}
A(f_1,...,f_n)=\sum_{\uf\in\Pi_n} (-2)^{n-|\uf|}\prod_{l=1}^{|\uf|}(|F_l|-1)!\sum_{S\ss\{1,...,|\uf|\}}\lb\prod_{l\in S^c}\uh_{F_l}(0)\rb\cdot\\
\cdot \sum_{S_2\ss S}(-1/2)^{|S_2^c|}\lb\prod_{l\in
S_2^c}U_{F_l}(0)\rb
\lb 2^{|S_2|/2}\sum_{\mathrm{pair}\mbox{ }\mathrm{up}\mbox{ }S_2}\prod_{i=1}^{|S_2|/2}\int_\R \uh_{a_i}(t)\uh_{b_i}(t)|t|\d t-\right. \\
-\frac 12 \sum_{S_3\subsetneq
S_2}2^{|S_3|/2}\sum_{\mathrm{pair}\mbox{ }\mathrm{up}\mbox{ }S_3}
\lb\prod_{i=1}^{|S_3|/2}\int_\R \uh_{a_i}(t)\uh_{b_i}(t)|t|\d t\rb
\cdot\\ \cdot \left. (-2)^{|S_3^c|}\sum_{I\ss
S_3^c}(-1)^{|I|}\int\limits_{\substack{\R_{\ge
0}^{|S_3^c|}\\\sum_{i\in I}t_i\leq \sum_{i\in I^c}t_i-1}}\prod_{i\in
S_3^c}(\uh_i(t)\d t_i)\rb.
\end{multline*}
Here $\uf=\{F_1,...,F_{|\uf|}\}$ ranges over the partitions of
$\{1,...,n\}$, $S$ over the subsets of $\{1,...,|\uf|\}$,
$U_{F_l}(t)=\prod_{k\in F_l}f_k(t)$, $S_2$ ranges over the subsets
of $S$, a pair up sum ranges over partitions
$\{\{a_1,b_1\},...,\{a_{|T|/2},b_{|T|/2}\}\}$ of a set $T$ (it is
empty if $|T|$ is odd), $S_3$ ranges over the proper subsets of
$S_2$ and $I$ ranges over the subsets of $S_3$.
\end{thm}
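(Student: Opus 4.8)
The plan is to obtain Theorem~\ref{finalthm} purely by assembling the three explicit formulas already established: Proposition~\ref{main} (the value of $\av{R(\uo)}$, i.e.\ the explicit form of $D$), Proposition~\ref{pairup} (the value of $M=B$ in terms of perfect pairings and the $D(S^c)$), and Proposition~\ref{arg} (the value of $\av{W^{(n)}_f}=A$ in terms of the $B(U_{l_1},\dots,U_{l_{|S_2|}})$). No new analytic estimate is needed: the work is a bookkeeping substitution plus an accounting of error terms.

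First I would note that the hypothesis \eqref{mb} underlying Proposition~\ref{arg} is now unconditional. Indeed, for every $m$, Proposition~\ref{pairup} together with the explicit $D$ of Proposition~\ref{main} (and the auxiliary bounds Lemma~\ref{upper} and Proposition~\ref{arg2}, whose induction on $m$ has already been carried out) yields $M(u_1,\dots,u_m)=B(u_1,\dots,u_m)+O(\log g/g)$ with $B$ explicit. In particular $D(u_1,\dots,u_{m'})$ equals $-2^{m'-1}\sum_{I\subseteq\{1,\dots,m'\}}(-1)^{|I|}\int_{D_I}\prod_i\uu_i(t_i)\,\d t_i$, where $D_I=\{t_i\geq 0:\ \sum_{i\in I}t_i\leq\sum_{i\in I^c}t_i-1\}$; note the constraint $\sum t_i\geq1$ appearing in Proposition~\ref{main} is automatic here, being implied by $\sum_{i\in I}t_i\leq\sum_{i\in I^c}t_i-1$ together with $t_i\geq0$.

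Then I would substitute, from the inside out: put the $D$-formula into the $D(S^c)$ appearing in Proposition~\ref{pairup}, and put the resulting $B$-formula into the $B(U_{l_1},\dots,U_{l_{|S_2|}})$ appearing in Proposition~\ref{arg}. This produces the displayed expression once the dummy index sets are named consistently --- $\uf$ the partition of $\{1,\dots,n\}$; $S\subseteq\{1,\dots,|\uf|\}$; $S_2\subseteq S$; $S_3\subsetneq S_2$ for the proper subset in Proposition~\ref{pairup} with the role of $\mm$ played by $S_2$; and $I\subseteq S_3^c$ for the subset in Proposition~\ref{main} with $\mm$ replaced by $S_3^c$. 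The only places calling for a moment's care are the constants: the ``pair up'' sums are empty unless $|S_2|$ and $|S_3|$ are even, so on the surviving terms $|S_3^c|=|S_2|-|S_3|$ is even and $(-2)^{|S_3^c|}=2^{|S_3^c|}$, which is how the coefficient $-\tfrac12(-2)^{|S_3^c|}$ in the statement reproduces $D(S_3^c)$; the remaining powers of $2$, the $-\tfrac12$'s, the signs $(-1)^{|I|}$, and the factorials $(|F_l|-1)!$ are transcribed directly from Propositions~\ref{arg} and~\ref{pairup}.

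Finally I would track the errors. The substitution has only finitely many nodes, in number bounded in terms of $n$ alone, and at each one the replacement costs an $O(\log g/g)$ (or $O(1/g)$) term; the products of these errors with the remaining factors are handled exactly as in the proof of Proposition~\ref{arg}, by dominating each $u_i$ by an even $v_i\geq|u_i|$ with $\hat v_i$ still supported in $(-s_i,s_i)$ and using that the associated sums over zeros are $O(1)$. Summing, the accumulated error is $O(\log g/g)$, which gives $\av{W^{(n)}_f}=A(f_1,\dots,f_n)+O(\log g/g)$. I expect the genuine obstacle to be none of the analysis but the combinatorial bookkeeping itself --- keeping the four nested layers of subset sums, together with their sign and power conventions, perfectly aligned across the three source formulas.
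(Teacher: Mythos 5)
Your plan is exactly the paper's: substitute Proposition~\ref{main} into Proposition~\ref{pairup} and that into Proposition~\ref{arg}, from the inside out, and handle the accumulated $O(\log g/g)$ errors by the majorant trick already used in the proof of Proposition~\ref{arg}. Your observations that \eqref{mb} is now unconditional, and that the extra constraint $\sum t_i\ge 1$ in Proposition~\ref{main} is subsumed by $\sum_{i\in I}t_i\le\sum_{i\in I^c}t_i-1$, are both correct and are the same remarks the paper makes implicitly.

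There is, however, a localized error in your sign accounting for the $S_3$-term, which is precisely the one place where the bookkeeping is nontrivial. You argue that the pair-up sums force both $|S_2|$ and $|S_3|$ to be even, hence $|S_3^c|=|S_2|-|S_3|$ is even, so $(-2)^{|S_3^c|}=2^{|S_3^c|}$ and the theorem's coefficient $-\tfrac12(-2)^{|S_3^c|}$ reduces to $-2^{|S_3^c|-1}$, the prefactor of $D(S_3^c)$. But the two pair-up sums live in \emph{separate summands} of $B$: the $S_3$-summand is nonzero whenever $|S_3|$ is even, regardless of the parity of $|S_2|$. So $|S_3^c|$ can be odd, and your derivation produces a wrong sign in that case because it silently drops the factor $(-1)^{|S_2|}$ coming from Proposition~\ref{arg}. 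The correct derivation is the paper's: on the $S_3$-summand $|S_3|$ is even, so $(-1)^{|S_2|}=(-1)^{|S_3^c|}$, and then
$$(-1)^{|S_2|}\,D(S_3^c)=(-1)^{|S_3^c|}\lb -2^{|S_3^c|-1}\rb\sum_{I\ss S_3^c}(-1)^{|I|}\!\int(\cdots)=-\tfrac12(-2)^{|S_3^c|}\sum_{I\ss S_3^c}(-1)^{|I|}\!\int(\cdots),$$
so the $(-2)$ in the theorem is an honest signed quantity absorbing $(-1)^{|S_3^c|}$, not a $2$ in disguise. Apart from this, your transcription matches the paper.
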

This coincides with the expression obtained in \cite{Gao} for the
$n$-level density statistics of the family of quadratic
$L$-functions.

\begin{proof}
We go through the verification. From Proposition~\ref{arg},
\begin{multline*}
\ave{W^{(n)}_f} \sim
\sum_{\uf} (-2)^{n-|\uf|}\prod_{l=1}^{|\uf|}(|F_l|-1)!\sum_{S\ss\{1,...,l\}}\lb\prod_{l\in S^c}\uh_{F_l}(0)\rb\cdot\\
\cdot \sum_{S_2\ss S}(-1/2)^{|S_2^c|}\lb\prod_{l\in
S_2^c}U_{F_l}(0)\rb (-1)^{|S_2|} B(U_{l_1},...,U_{l_{|S_2|}})
\end{multline*}
By Proposition~\ref{pairup},
\begin{multline*}
B(U_{l_1},...,U_{l_{|S_2|}})\sim
2^{|S_2|/2}\sum_{\mathrm{pair}\mbox{ }\mathrm{up}\mbox{
}S_2}\prod_{i=1}^{|S_2|/2}
\int_\R \uu_{a_i}(t)\uu_{b_i}(t)|t|\d t+\\
+\sum_{S_3\subsetneq S_2}2^{|S_3|/2}\sum_{\mathrm{pair}\mbox{
}\mathrm{up}\mbox{ }S_3} \prod_{i=1}^{|S_3|/2}\int_\R
\uu_{a_i}(t)\uu_{b_i}(t)|t|\d t \cdot D(S_3^c).
\end{multline*}
Taking into account that the first term above only occurs if $|S_2|$
is even, so that $(-1)^{|S_2|}=1$, gives
\begin{multline*}
\ave{W^{(n)}_f}\sim \sum_{\uf}
(-2)^{n-|\uf|}\prod_{l=1}^{|\uf|}(|F_l|-1)!\sum_{S\ss\{1,...,l\}}
\lb\prod_{l\in S^c}\uh_{F_l}(0)\rb\\
\cdot \sum_{S_2\ss S}(-1/2)^{|S_2^c|}\lb\prod_{l\in
S_2^c}U_{F_l}(0)\rb 
\Big\{ 2^{|S_2|/2}\sum_{\mathrm{pair}\mbox{ }\mathrm{up}\mbox{
}S_2}\prod_{i=1}^{|S_2|/2}
\int_\R \uu_{a_i}(t)\uu_{b_i}(t)|t|\d t \\
+(-1)^{|S_2|}\sum_{S_3\subsetneq
S_2}2^{|S_3|/2}\sum_{\mathrm{pair}\mbox{ }\mathrm{up}\mbox{ }S_3}
\prod_{i=1}^{|S_3|/2}\int_\R \uu_{a_i}(t)\uu_{b_i}(t)|t|\d t \cdot
D(S_3^c) \Big\}  .
\end{multline*}
We also note that the term with $S_3$ only occur if $|S_3|$ is even,
so that we may replace $(-1)^{|S_2|} = (-1)^{|S_3^c|}$. Inserting
Proposition~\ref{main} (which says $\ave{R}\sim D$) gives
\begin{multline*}
\ave{W^{(n)}_f}\sim \sum_{\uf}
(-2)^{n-|\uf|}\prod_{l=1}^{|\uf|}(|F_l|-1)! \sum_{S\ss\{1,...,l\}}
\lb\prod_{l\in S^c}\uh_{F_l}(0)\rb \\
\cdot \sum_{S_2\ss S}(-\frac 12)^{|S_2^c|}\lb\prod_{l\in
S_2^c}U_{F_l}(0)\rb \\
\Big\{ 2^{|S_2|/2}\sum_{\mathrm{pair}\mbox{ }\mathrm{up}\mbox{
}S_2}\prod_{i=1}^{|S_2|/2}
\int_\R \uu_{a_i}(t)\uu_{b_i}(t)|t|\d t \\
-\frac 12  \sum_{S_3\subsetneq
S_2}2^{|S_3|/2}\sum_{\mathrm{pair}\mbox{ }\mathrm{up}\mbox{ }S_3}
\prod_{i=1}^{|S_3|/2}\int_\R \uu_{a_i}(t)\uu_{b_i}(t)|t|\d t
 \\ \cdot (-2)^{|S_3^c| } \sum_{I\ss S_3^c}(-1)^{|I|}
\int\limits_{\substack{t_1,\dots,t_{|S_3^c|}\geq 0\\ \sum t_i\ge 1\\
\sum_{i\in I}t_i\leq \sum_{i\in I^c}t_i-1}} \prod_{i=1}^{|S_3^c|}
 \uu_i(t_i)\d t_i  \Big\}  \;,
\end{multline*}
with a remainder of $O(\log g/g)$. This is exactly the expression
derived in Gao's thesis (see \cite[Theorem II.1]{Gao} or
\cite[Theorem 2.1]{Gaoarxiv}).
\end{proof}

\end{document}